\newtheorem{theorem}{Theorem}[section]
\newtheorem{dfn}[theorem]{Definition}
\newtheorem{cor}[theorem]{Corollary}
\newtheorem{lem}[theorem]{Lemma}
\newtheorem{prop}[theorem]{Proposition}
\newtheorem*{remark}{Remark}
\theoremstyle{definition}
\DeclareMathOperator\tr{tr}
\begin{document}

\title{Equiangular lines in low dimensional Euclidean spaces}
\author{Gary R. W. Greaves \thanks{ Division of Mathematical Sciences,
  School of Physical and Mathematical Sciences,
  Nanyang Technological University,
  21 Nanyang Link, Singapore 637371, Singapore
  {\tt gary@ntu.edu.sg}. Supported by the Singapore
Ministry of Education Academic Research Fund (Tier 1);  grant number:  RG29/18.}
\and
Jeven Syatriadi \thanks{ Division of Mathematical Sciences, 
  School of Physical and Mathematical Sciences, 
  Nanyang Technological University, 
  21 Nanyang Link, Singapore 637371, Singapore.
  {\tt jsyatriadi@ntu.edu.sg}.}
\and
  Pavlo Yatsyna \thanks{Department of Algebra, 
Faculty of Mathematics and Physics, 
Charles University, 
Sokolovsk\'a 83, 18600 Praha 8, Czech Republic.
  {\tt pvyatsyna@gmail.com}. Supported by project PRIMUS/20/SCI/002 from Charles University.}
}
\date{}
\maketitle

\begin{abstract}
    We show that the maximum cardinality of an equiangular line system in $14$ and $16$ dimensions is $28$ and $40$, respectively, thereby solving a longstanding open problem.
    We also improve the upper bounds on the cardinality of equiangular line systems in $19$ and $20$ dimensions to $74$ and $94$, respectively.
    
\end{abstract}

\section{Introduction}

A set of lines each passing through the origin of Euclidean space is called an \textbf{equiangular line system} if the angle between any pair of lines is the same.
Given $d$, we would like to know $N(d)$, the maximum cardinality of an equiangular line system in $\mathbb R^d$.
This problem dates back to the late 1940s with a paper of Haantjes~\cite{haantjes48}, who determined $N(2)=3$ and $N(3) = 6$.
The study of equiangular line systems largely developed in the 1970s due to the advent of the linear algebraic approach of Seidel et al.~\cite{lemmens73,vLintSeidel66,Seidel74}. 
In particular, in 1973, Lemmens and Seidel~\cite{lemmens73} determined the sequence $(N(d))_{d \in \mathbb N}$ consecutively for $d$ up to $13$.
(See Sequence A002853 in The On-Line Encyclopedia of Integer Sequences.)

Up until now, and despite a considerable amount of research in the
past 45 years, determining the value of $N(14)$ had remained an open problem.
In this paper we show that $N(14) = 28$ and $N(16) = 40$ and we improve the upper bounds for dimensions $d=19$ and $d=20$, i.e., we show that $N(19) \leqslant 74$ and $N(20) \leqslant 94$.
Our work verifies part of a conjecture of Lin and Yu~\cite[Conjecture 3]{LinYu}.
Furthermore, we vindicate Seidel in \cite[Section 3.3]{SeidelIncorrectTable} where $N(14) = 28$ and $N(16) = 40$ were claimed without proof.
Note that Seidel also claimed that $N(18) = 48$, which we now know to be false~\cite{LinYu,Szo} (see Table~\ref{tab:equi}).

Equiangular line systems correspond to a variety of objects in different mathematical disciplines: regular two-graphs in group theory~\cite{taylor}, equilateral point-sets in elliptic geometry~\cite{haantjes48}, and optimal Grassmannian frames in frame theory~\cite{heath}.
Just in the last few years there have been many developments relating to the sequence $(N(d))_{d \in \mathbb N}$.
In particular, there have emerged improvements to the lower bounds for $N(18)$~\cite{LinYu,Szo} and improvements to the upper bounds for $N(d)$ where $d = 14, 16, 17, 18, 19, 20$ \cite{Azarija75,Azarija95,GG18,GKMS16,GreavesYatsyna19}.
There have also been various recent improvements to upper bounds for $N(d)$ for $d \geqslant 24$ using semidefinite programming, see \cite{GlazYu,KingTang,delaat,okudayu16, Yu17}. 

The asymptotic behaviour of $N(d)$ is quadratic in $d$ with a general upper bound of $d(d+1)/2$~\cite[Theorem 3.5]{lemmens73} and a general lower bound of $(32d^2 + 328d + 29)/1089$~\cite[Corollary 2.8]{GKMS16}.
One can also consider the related problem of, for fixed $\alpha \in (0,1)$, finding $N_\alpha(d)$, the maximum number of lines in $\mathbb R^d$ through the origin with pairwise angle $\arccos \alpha$. 
Motivated by a conjecture of Bukh~\cite{bukh16}, the asymptotic behaviour of $N_\alpha(d)$ was recently shown to be linear in $d$ \cite{balla18,jiang}.

In Table~\ref{tab:equi} below, we give the currently known (including the improvements from this paper) values or lower and upper bounds for $N(d)$ for $d$ at most $23$.
\begin{table}[ht]
	\begin{center}
	\setlength{\tabcolsep}{2pt}
	\begin{tabular}{c|ccccccccccccccc}
		$d$  & 2 & 3        & 4           & 5  & 6  & 7--14 & 15 & 16 & 17 & 18 & 19 & 20 & 21 & 22 & 23\\\hline
		$N(d)$  & 3 & 6        & 6           & 10 & 16 & 28  & 36 & 40 & 48--49 & 56--60 & 72--74 & 90--94  & 126 & 176 & 276
	\end{tabular}
	\end{center}
	\caption{Bounds for the sequence $N(d)$ for $2\leqslant d\leqslant 23$.  A single number is given in the cases where the exact number is known.  The improvements from this paper, $N(14) = 28$, $N(16) = 40$, $N(19) \leqslant 74$, and $N(20) \leqslant 94$ are included.}
	\label{tab:equi}
\end{table}

	Let $\mathcal L$ be an equiangular line system of cardinality $n$ in $\mathbb R^{d}$ with $n > d$.
	Suppose $\{ \mathbf{v}_1,\dots, \mathbf{v}_{n} \}$ is a set of unit spanning vectors for the lines in $\mathcal L$.
	For any two distinct vectors $\mathbf v_i$ and $\mathbf v_j$ (with $i \ne j$) the
	inner product $\mathbf{v}_i^\top \mathbf{v}_j$ is equal to $\pm \alpha$ for some $\alpha \in (0,1)$.
	Thus, the Gram matrix $G$ for this set of vectors has diagonal entries equal to $1$ and off-diagonal entries equal to $\pm \alpha$.
	The $\{0,\pm 1\}$-matrix $S = (G-I)/\alpha$ is called the \textbf{Seidel matrix} corresponding to the set of lines $\mathcal L$.
	Note that changing the direction of our spanning vectors $\mathbf v_i$ corresponds to conjugating $S$ with a $\{\pm 1\}$-diagonal matrix.
	Furthermore, since $G$ is positive semidefinite with rank $d$, the smallest eigenvalue of $S$ is $-1/\alpha$ with multiplicity $n-d$.
    
    To show the nonexistence of equiangular line systems of cardinality $n$ in $\mathbb R^d$ for certain pairs $(n,d)$, we demonstrate the nonexistence of their corresponding Seidel matrices.
    We take advantage of modular constraints on the coefficients of the characteristic polynomial of a Seidel matrix \cite{GreavesYatsyna19}.
    The eigenvalues of a Seidel matrix that corresponds to an equiangular line system of large cardinality relative to its ambient space are subject to strong geometric constraints.
    Our approach is to combine these modular and geometric constraints together to enable us to enumerate each possible characteristic polynomial for a putative Seidel matrix.
    Once an exhaustive list of the possible characteristic polynomials has been found, we apply spectral methods to show that no Seidel matrix can exist having the corresponding characteristic polynomials.
    
    To produce our exhaustive lists of possible characteristic polynomials, we use a polynomial enumeration algorithm, which we have implemented in SageMath~\cite{sage}.
    This implementation is available from GitHub~\cite{github}.
    The total running time (on a modern PC) of all the computations used in this paper is less than 22 minutes.
    
    The outline of this paper is as follows.
    In Section~\ref{sec:prelim}, we introduce (weakly)-type-2 polynomials and describe the polynomial enumeration algorithm.
    In Section~\ref{sec:ccp}, we enumerate all polynomials that potentially correspond to equiangular line systems for certain pairs $(n,d)$.
    In Section~\ref{sec:prescribed}, we present a method for demonstrating the nonexistence of Seidel matrices having a prescribed characteristic polynomial.
    In Section~\ref{sec:1920}, we show that $N(19) \leqslant 74$, and $N(20) \leqslant 94$ and in Section~\ref{sec:dim1416}, we show that $N(14) = 28$ and $N(16) = 40$.
\section{Preliminaries}

\label{sec:prelim}

\subsection{Characteristic polynomials and interlacing}
    
    Let $\mathcal L$ be an equiangular line system of cardinality $n$ in $\mathbb R^{d}$ with $n > d$ and let $S$ be the Seidel matrix corresponding to $\mathcal L$.
	The Seidel matrix $S$ is a symmetric matrix with real entries, which means that each zero of its characteristic polynomial $\chi_S(x) = \det(xI-S)$ is real.
	In other words, $\chi_S(x)$ is a \textbf{totally-real} polynomial.
	Moreover, since every entry of $S$ is an integer, each coefficient of $\chi_S(x)$ is also an integer.
	
	By \cite[Theorem 3.4]{lemmens73} together with \cite[Lemma 6.1]{vLintSeidel66} and \cite[Theorem 4.5]{lemmens73}, for each ordered pair $(n,d) \in \{(29,14),(41,16),(75,19),(95,20)\}$, any corresponding Seidel matrix must have smallest eigenvalue equal to $-5$.
	We record this result as a theorem.
	\begin{theorem}
	\label{thm:smallesteigenvalue}
	    Let $(n,d) \in \{(29,14),(41,16),(75,19),(95,20)\}$ and let $S$ be a Seidel matrix corresponding to $n$ equiangular lines in $\mathbb R^d$.
	    Then $\chi_S(x)=(x+5)^{n-d}p(x)$, where $p(x)$ is a monic integer polynomial of degree $d$ all of whose zeros are greater than $-5$.
	\end{theorem}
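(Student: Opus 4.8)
The strategy is to extract everything from the linear-algebraic dictionary recorded just before the statement, and to use the three cited results only to pin down the value of $\alpha$.

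\textbf{Step 1: $1/\alpha = 5$.} Since $S$ has integer entries, $\chi_S \in \mathbb{Z}[x]$, and $-1/\alpha$ is an eigenvalue of $S$ of multiplicity $n-d$. In each of the four cases one has $n > 2d$, so this multiplicity exceeds $n/2$. Because the Galois conjugates of an eigenvalue of an integer symmetric matrix are again eigenvalues of equal multiplicity (they are roots of the same irreducible rational factor of $\chi_S$), an irrational $-1/\alpha$ would force $\chi_S$ to be divisible by a square of an irreducible polynomial of degree $\geqslant 2$, giving a factor of degree $\geqslant 2(n-d) > n$ — impossible. Hence $-1/\alpha$ is a rational algebraic integer, so $1/\alpha$ is a positive integer, which is moreover \emph{odd} by the classical constraint cited from \cite{vLintSeidel66, lemmens73}. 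To exclude $1/\alpha \geqslant 7$ I would invoke the relative bound $n \leqslant d(1-\alpha^2)/(1-d\alpha^2)$, which applies since $d\alpha^2 < 1$ for all four dimensions once $\alpha \leqslant 1/7$, and which in each case forces $n$ strictly below the prescribed cardinality. To exclude $1/\alpha = 3$ I would invoke the classification of equiangular lines at angle $\arccos(1/3)$ (Theorem~4.5 of \cite{lemmens73}), which caps the number of such lines at $\max\{28,\,2(d-1)\}$, again below the prescribed cardinality for $d \in \{14,16,19,20\}$. Therefore $1/\alpha = 5$, and the smallest eigenvalue of $S$ equals $-5$.

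\textbf{Step 2: the factorisation.} With $\alpha = 1/5$, the Gram matrix $G = I + \alpha S$ is positive semidefinite of rank exactly $d$, so $0$ is an eigenvalue of $G$ of multiplicity $n-d$ while its remaining $d$ eigenvalues are strictly positive. Transferring this through $S = (G-I)/\alpha$: the eigenvalue $-5$ of $S$ has multiplicity exactly $n-d$, and every other eigenvalue of $S$ is strictly greater than $-5$. Letting $p(x)$ be the monic polynomial of degree $d$ whose roots are exactly those remaining $d$ eigenvalues, we obtain $\chi_S(x) = (x+5)^{n-d}p(x)$ with all zeros of $p$ exceeding $-5$. Finally, $p$ has integer coefficients: both $\chi_S$ and $(x+5)^{n-d}$ are monic elements of $\mathbb{Z}[x]$, so the quotient produced by polynomial division lies in $\mathbb{Z}[x]$.

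The only genuine content is Step~1, and within it the single point not settled by an elementary estimate is the exclusion of $1/\alpha = 3$: there the relative bound degenerates ($d\alpha^2 > 1$ for each of the four dimensions when $\alpha = 1/3$), so one must appeal to the nontrivial classification of \cite{lemmens73}. Everything else — integrality and oddness of $1/\alpha$, the relative-bound arithmetic for $1/\alpha \geqslant 7$, the transfer of spectral data from $G$ to $S$, and the integrality of $p$ — is routine, and I would present the proof as the two short steps above with the cited results doing the work in Step~1.
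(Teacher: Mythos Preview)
Your proposal is correct and takes essentially the same route as the paper: the paper itself offers no proof beyond citing \cite[Theorem~3.4]{lemmens73}, \cite[Lemma~6.1]{vLintSeidel66}, and \cite[Theorem~4.5]{lemmens73}, and your two steps spell out precisely how these ingredients combine --- integrality of $1/\alpha$ from the $n>2d$ multiplicity argument, oddness from the cited classical constraint, the relative bound to eliminate $1/\alpha\geqslant 7$, and the $\alpha=1/3$ classification for the remaining case. The only cosmetic difference is that you derive integrality via the Galois-conjugate observation rather than invoking Neumann's theorem directly, but the content is the same.
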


Our main approach for showing that a Seidel matrix $S$ having a certain spectrum does not exist is to consider the principal submatrices of $S$ and their characteristic polynomials. 
Cauchy's interlacing theorem, below, provides bounds for the eigenvalues of principal submatrices of $S$.
For a matrix $M$, denote by $M[i]$ the principal submatrix of $M$ obtained by deleting its $i$th row and column.

		\begin{theorem}[\cite{Cau:Interlace,Fisk:Interlace05, Hwang:Interlace04}]\label{thm:cauchyinterlace}
			Let $M$ be a real symmetric matrix having eigenvalues $\lambda_1 \leqslant \lambda_2 \leqslant \dots \leqslant \lambda_n$ and suppose $M[j]$, for some $j \in \{1,\dots,n \}$, has eigenvalues $\mu_1 \leqslant \mu_2 \leqslant \dots \leqslant \mu_{n-1}$.
			Then
			\[
				\lambda_1 \leqslant \mu_1 \leqslant \lambda_2 \leqslant \dots \leqslant \lambda_{n-1} \leqslant \mu_{n-1} \leqslant \lambda_n.
			\]
		\end{theorem}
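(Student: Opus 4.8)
The statement is the classical Cauchy interlacing theorem, and the plan is to deduce it from the Courant--Fischer min--max characterization of the eigenvalues of a real symmetric matrix. First I would recall (or cite) that fact: for the $n \times n$ real symmetric matrix $M$ with eigenvalues $\lambda_1 \leqslant \dots \leqslant \lambda_n$ and each $k \in \{1,\dots,n\}$,
\[
\lambda_k \;=\; \min_{\substack{U \subseteq \mathbb R^n \\ \dim U = k}}\ \max_{0 \ne x \in U} \frac{x^\top M x}{x^\top x} \;=\; \max_{\substack{V \subseteq \mathbb R^n \\ \dim V = n-k+1}}\ \min_{0 \ne x \in V} \frac{x^\top M x}{x^\top x}.
\]
Everything then follows by a bookkeeping argument, once $M[j]$ is identified with a compression of $M$.

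For that identification, let $W = \{ x \in \mathbb R^n : x_j = 0 \}$, an $(n-1)$-dimensional subspace, and let $P$ be the orthogonal projection onto $W$. Then for $x \in W$ we have $x^\top M x = x^\top(PMP)x$, and in the basis $\{ e_i : i \ne j \}$ the operator $PMP|_W$ is exactly the matrix $M[j]$; hence $M[j]$ has eigenvalues $\mu_1 \leqslant \dots \leqslant \mu_{n-1}$ where, for each $k \in \{1,\dots,n-1\}$,
\[
\mu_k \;=\; \min_{\substack{U \subseteq W \\ \dim U = k}}\ \max_{0 \ne x \in U} \frac{x^\top M x}{x^\top x} \;=\; \max_{\substack{V \subseteq W \\ \dim V = n-k}}\ \min_{0 \ne x \in V} \frac{x^\top M x}{x^\top x},
\]
using $(n-1)-k+1 = n-k$ for the dimension in the second expression. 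The $k$-dimensional subspaces of $W$ form a subfamily of the $k$-dimensional subspaces of $\mathbb R^n$, so enlarging the domain of the outer minimum can only decrease it, giving $\mu_k \geqslant \lambda_k$. Dually, the $(n-k)$-dimensional subspaces of $W$ form a subfamily of the $(n-k)$-dimensional subspaces of $\mathbb R^n$, and $\lambda_{k+1} = \max_{\dim V = n-k} \min_{0 \ne x \in V} (x^\top M x)/(x^\top x)$, so enlarging the domain of the outer maximum can only increase it, giving $\mu_k \leqslant \lambda_{k+1}$. Hence $\lambda_k \leqslant \mu_k \leqslant \lambda_{k+1}$ for every $k \in \{1,\dots,n-1\}$, which is precisely the asserted chain of inequalities.

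This route is essentially obstacle-free once Courant--Fischer is available; the only things needing care are the index bookkeeping in the dual characterization (the relevant dimension for the $k$-th eigenvalue of an $(n-1)$-dimensional quadratic form is $n-k$) and the verification that $PMP|_W$ coincides with $M[j]$. A second, more computational route --- permuting indices so that $j = n$, writing $M$ in block form with leading block $M[n]$, expanding $\det(xI - M)$ along the last row and column, and tracking the sign changes of the resulting rational function of $x$ between consecutive zeros of $\chi_{M[n]}(x)$ --- also proves the theorem, but there one must treat the degenerate cases (repeated eigenvalues of $M[n]$, or a vanishing coupling coefficient) by a separate continuity or perturbation argument, so I would prefer the min--max proof.
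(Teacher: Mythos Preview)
Your proof via the Courant--Fischer min--max principle is correct and is one of the standard arguments for Cauchy interlacing; the bookkeeping and the identification of $M[j]$ with $PMP|_W$ are handled cleanly. Note, however, that the paper does not supply its own proof of this theorem: it is stated as a classical result with citations to Cauchy, Fisk, and Hwang, so there is no in-paper argument to compare against. Your approach is essentially that of the cited reference~\cite{Hwang:Interlace04}.
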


	    Given $\delta \in \mathbb N$ and polynomials $f(x) = \prod_{i=1}^{\delta}(x-\lambda_i)$ and $g(x) = \prod_{i=0}^{\delta}(x-\mu_i)$ such that $\lambda_1 \leqslant \lambda_2 \leqslant \dots \leqslant \lambda_\delta$, and $\mu_0 \leqslant \mu_1 \leqslant \dots \leqslant \mu_\delta$, we say that $f$ \textbf{interlaces} $g$ if $\mu_0 \leqslant \lambda_1 \leqslant \mu_1 \leqslant \dots \leqslant \lambda_\delta \leqslant \mu_\delta$.

The next result is a condition on the sum of the characteristic polynomials of principal submatrices of a matrix.

\begin{theorem}[{\cite[Page 116]{thompson1968principal}}]\label{thm:sumofsubpoly}
Let $M$ be a real symmetric matrix of order $n$.
Then
\begin{equation}
    \label{eqn:thm}
    \sum_{i=1}^n \chi_{M[i]}(x)=\frac{\mathrm{d}}{\mathrm{d}x} \chi_M(x).
\end{equation}
\end{theorem}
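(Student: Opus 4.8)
The plan is to prove the identity by differentiating $\chi_M(x) = \det(xI - M)$ directly, exploiting the multilinearity of the determinant in the rows of its argument. Recall that if $A(x)$ is an $n \times n$ matrix whose entries are differentiable functions of $x$, with rows $\mathbf a_1(x),\dots,\mathbf a_n(x)$, then applying the product rule to the multilinear expansion of $\det A(x) = \det\bigl(\mathbf a_1(x),\dots,\mathbf a_n(x)\bigr)$ gives
\[
\frac{\mathrm d}{\mathrm d x}\det A(x) = \sum_{i=1}^n \det\bigl(\mathbf a_1(x),\dots,\mathbf a_{i-1}(x),\,\mathbf a_i'(x),\,\mathbf a_{i+1}(x),\dots,\mathbf a_n(x)\bigr).
\]

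First I would apply this with $A(x) = xI - M$. Writing $\mathbf m_i$ for the $i$th row of $M$ and $\mathbf e_i$ for the $i$th standard basis vector, the $i$th row of $xI - M$ is $x\mathbf e_i^\top - \mathbf m_i$, whose derivative in $x$ is simply $\mathbf e_i^\top$. Hence the $i$th summand above is the determinant of the matrix obtained from $xI - M$ by replacing its $i$th row with $\mathbf e_i^\top$. Next I would evaluate that determinant by Laplace expansion along the $i$th row: since that row is $\mathbf e_i^\top$, the only surviving term is the $(i,i)$ cofactor $(-1)^{i+i} = 1$ times the corresponding minor, and the minor is exactly $\det\bigl((xI-M)[i]\bigr) = \det\bigl(xI_{n-1} - M[i]\bigr) = \chi_{M[i]}(x)$. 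Summing over $i$ yields \eqref{eqn:thm}.

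Equivalently, one may package these steps through Jacobi's formula $\frac{\mathrm d}{\mathrm d x}\det(xI - M) = \tr\bigl(\operatorname{adj}(xI - M)\bigr)$ and observe that the $i$th diagonal entry of $\operatorname{adj}(xI - M)$ is the $(i,i)$ cofactor of $xI - M$, which is precisely $\chi_{M[i]}(x)$; this is the same computation phrased in terms of the adjugate. There is essentially no obstacle here: the identity holds for any square matrix over any commutative ring, and the symmetry of $M$ plays no role. The only point needing a moment's care is the sign bookkeeping in the cofactor expansion, and that is trivially favourable since the repeated index contributes $(-1)^{2i} = 1$.
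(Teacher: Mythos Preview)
Your proof is correct. The paper itself does not prove this theorem at all; it simply cites Thompson~\cite{thompson1968principal} for the result, so there is no argument in the paper to compare against. Your approach via multilinearity of the determinant (equivalently, Jacobi's formula applied to $xI-M$) is the standard one, and as you note the symmetry of $M$ is irrelevant---the identity holds for arbitrary square matrices.
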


Since every Seidel matrix $S$ of order $n$ has zero trace and the trace of $S^2$ is $n(n-1)$, we have the following.

		\begin{lem}[{\cite[Lemma 5.4]{GreavesYatsyna19}}]\label{lem:coeff_of_interlacepoly}
		Let $S$ be a Seidel matrix of order $n$. 
		Suppose $S$ has minimal polynomial $m_S(x) = \sum_{i=0}^d a_i x^{d-i}$.
		Then, for all $j \in \{1,\dots,n\}$,
		\[
		\chi_{S[j]}(x)=\dfrac{\chi_S(x)}{m_S(x)}\sum_{i=0}^{d-1} b_i x^{d-1-i},
		\]
		where $b_0 = 1$, $b_1 = a_1$, $b_2 = a_2+n-1$, and $b_i \in \mathbb Z$ for $i \in \{3,\dots,d-1\}$.
		\end{lem}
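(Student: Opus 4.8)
The plan is to prove the identity in two stages: first establish the factorisation $\chi_{S[j]}(x) = q(x)\,r_j(x)$ with $q(x) := \chi_S(x)/m_S(x)$ and $r_j(x)$ a monic integer polynomial of degree $d-1$, and then extract the coefficients $b_0,b_1,b_2$ of $r_j$ by comparing low-order coefficients. As a setup I would record the standard facts that, since $S$ is a real symmetric integer matrix, it is diagonalisable, its minimal polynomial $m_S(x) = \prod_\lambda (x-\lambda)$ (product over the distinct eigenvalues $\lambda$) is a squarefree monic polynomial of degree $d$ with integer coefficients, and hence $q(x) = \chi_S(x)/m_S(x)$ is a monic integer polynomial of degree $n-d$ (exact division of a monic integer polynomial by a monic integer polynomial stays in $\mathbb Z[x]$).

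For the factorisation I would use the adjugate/resolvent identity. Cramer's rule gives that the $(j,j)$ entry of $(xI-S)^{-1}$ equals $\chi_{S[j]}(x)/\chi_S(x)$. Writing $P_\lambda$ for the orthogonal projection onto the $\lambda$-eigenspace of $S$, the spectral theorem gives $(xI-S)^{-1} = \sum_\lambda (x-\lambda)^{-1} P_\lambda$, so, comparing $(j,j)$ entries,
\[
\frac{\chi_{S[j]}(x)}{\chi_S(x)} \;=\; \sum_{\lambda}\frac{(P_\lambda)_{jj}}{x-\lambda},
\]
the sum being over the distinct eigenvalues of $S$. Multiplying through by $m_S(x) = \prod_\lambda(x-\lambda)$ and using $\chi_S(x)/m_S(x) = q(x)$ yields $\chi_{S[j]}(x) = q(x)\,r_j(x)$, where $r_j(x) := \sum_\lambda (P_\lambda)_{jj} \prod_{\mu\ne\lambda}(x-\mu)$ is a polynomial of degree at most $d-1$ whose coefficient of $x^{d-1}$ is $\sum_\lambda (P_\lambda)_{jj} = \bigl(\sum_\lambda P_\lambda\bigr)_{jj} = 1$; thus $r_j$ is monic of degree exactly $d-1$. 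Its coefficients are integers because $q$ is a monic integer polynomial dividing the integer polynomial $\chi_{S[j]}$.

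It then remains to identify $b_0,b_1,b_2$. Write $q(x) = x^{n-d} + q_1 x^{n-d-1} + q_2 x^{n-d-2} + \cdots$. From $\chi_S = m_S q$ and the facts $\tr S = 0$ and $\tr S^2 = n(n-1)$ (so the coefficients of $x^{n-1}$ and $x^{n-2}$ in $\chi_S$ are $0$ and $-n(n-1)/2$), one obtains $q_1 = -a_1$ and $q_2 = a_1^2 - a_2 - n(n-1)/2$. Likewise $S[j]$ is a Seidel matrix of order $n-1$, so $\tr S[j] = 0$ and $\tr S[j]^2 = (n-1)(n-2)$; comparing the coefficients of $x^{n-2}$ and $x^{n-3}$ on the two sides of $\chi_{S[j]} = q\,r_j$ then gives $b_1 = -q_1 = a_1$ and, after substituting the value of $q_2$, $b_2 = a_2 + n - 1$, while $b_0 = 1$ is immediate.

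I expect the factorisation step to be the crux: the delicate point is not that $\chi_{S[j]}/\chi_S$ has a denominator dividing $m_S$, but that after clearing denominators the numerator $r_j$ has degree exactly $d-1$ and is monic, which is precisely what the normalisation $\sum_\lambda P_\lambda = I$ delivers. Diagonalisability of $S$ (a consequence of symmetry) is what makes $m_S$ squarefree and thereby matches the simple poles of the resolvent; the subsequent coefficient bookkeeping is routine.
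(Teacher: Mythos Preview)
The paper does not actually supply a proof of this lemma; it is quoted verbatim from \cite[Lemma 5.4]{GreavesYatsyna19}. So there is no ``paper's own proof'' to compare against here.

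That said, your argument is correct. The resolvent identity you use, $\chi_{S[j]}(x)/\chi_S(x) = \sum_\lambda (P_\lambda)_{jj}/(x-\lambda)$, is exactly the content of Proposition~\ref{prop:submatrix} in the paper (with $\alpha_{ij}^2 = (P_{\lambda_i})_{jj}$), so your factorisation step is in the same spirit as the tools the authors invoke elsewhere. The passage from $r_j$ having real coefficients to $r_j \in \mathbb Z[x]$ via division of monic integer polynomials is clean, and your coefficient bookkeeping for $b_1$ and $b_2$ checks out: with $q_1 = -a_1$ and $q_2 = a_1^2 - a_2 - n(n-1)/2$ from $\chi_S = m_S q$, the relation $q_2 + q_1 b_1 + b_2 = -(n-1)(n-2)/2$ from $\chi_{S[j]} = q r_j$ indeed gives $b_2 = a_2 + n - 1$.
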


\subsection{Type-2 polynomials}

It was shown in \cite{GreavesYatsyna19} that the coefficients of characteristic polynomials of Seidel matrices satisfy certain modular constraints.

\begin{theorem}[{\cite[Section 3]{GreavesYatsyna19}}]\label{thm:basicDivisibility}
	Let $S$ be a Seidel matrix of order $n$ and write $\chi_{S+I}(x) = \sum_{i=0}^n a_i x^{n-i}$.
	Then $a_0 = 1$, $a_1 = -n$, and $a_2 = 0$.
	Furthermore, if $n$ is even then $2^{i}$ divides $a_i$ for all $i \in \{0,\dots,n\}$.
	Otherwise, $2^{i-1}$ divides $a_i$ for all odd $i \in \{0,\dots,n\}$ and $2^{i}$ divides $a_i$ for all even $i \in \{0,\dots,n\}$.
\end{theorem}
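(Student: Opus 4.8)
The plan is to pass to $B = S+I$, which is the symmetric $\{\pm1\}$-matrix $J - 2A$, where $J=\mathbf 1\mathbf 1^\top$ and $A$ is the $\{0,1\}$-adjacency matrix of the graph recording the positions of the $-1$ entries of $S$ (so $A$ is symmetric with zero diagonal). The assertions $a_0 = 1$, $a_1 = -n$, $a_2 = 0$ are immediate from $a_i = (-1)^i\sum_{|T|=i}\det B[T]$ (the sum over principal submatrices indexed by $i$-subsets $T$): indeed $a_0 = 1$, $a_1 = -\tr B = -n$, and every $2\times 2$ principal submatrix of $B$ has determinant $1 - S_{ij}^2 = 0$, so $a_2 = 0$.

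For the $2$-power divisibility I would apply the matrix determinant lemma to write $\chi_B(x) = \det\bigl((xI+2A) - \mathbf 1\mathbf 1^\top\bigr) = \det(xI+2A) - \mathbf 1^\top\operatorname{adj}(xI+2A)\mathbf 1$, substitute $x = 2y$, and use $\det(2M) = 2^n\det M$ together with $\operatorname{adj}(2M) = 2^{n-1}\operatorname{adj}M$ to get
\[
  \chi_B(2y) = 2^n\det(yI+A) - 2^{n-1}\,\mathbf 1^\top\operatorname{adj}(yI+A)\mathbf 1.
\]
Writing $\det(yI+A) = \sum_i r_i y^{n-i}$ and $\mathbf 1^\top\operatorname{adj}(yI+A)\mathbf 1 = \sum_i s_i y^{n-i}$ with all $r_i, s_i \in \mathbb Z$, and matching the coefficient of $y^{n-i}$ against $\chi_B(2y) = \sum_i a_i 2^{n-i}y^{n-i}$, I obtain $a_i = 2^{i-1}(2r_i - s_i)$. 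This already gives $2^{i-1} \mid a_i$ for all $i$, which is exactly the claim for odd $i$ when $n$ is odd, and it reduces every remaining case to proving that $s_i$ is even whenever $i$ is even (for any $n$) or whenever $n$ is even.

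To control the $s_i$ modulo $2$, the key point is that $\operatorname{adj}(yI+A)$ is symmetric, so its off-diagonal entries cancel in pairs and $\mathbf 1^\top\operatorname{adj}(yI+A)\mathbf 1 \equiv \tr\operatorname{adj}(yI+A) \pmod 2$, coefficient by coefficient. Now $\tr\operatorname{adj}(yI+A) = \sum_k \det\bigl((yI+A)[k]\bigr) = \sum_k \chi_{(-A)[k]}(y)$, and Theorem~\ref{thm:sumofsubpoly} (applied to $-A$) identifies this with $\frac{\mathrm d}{\mathrm dy}\det(yI+A) = \sum_i(n-i)r_i y^{n-1-i}$; hence $s_i \equiv (n-i+1)\,r_{i-1} \pmod 2$. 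If $n$ is even and $i$ is odd then $n-i+1$ is even and we are done. If $i$ is even then $i-1$ is odd, and $r_{i-1} \equiv [y^{n-i+1}]\det(yI-A) = [y^{n-(i-1)}]\chi_A(y) \pmod 2$ is even: the coefficient of $y^{n-j}$ in $\chi_A(y)$ is $(-1)^j$ times a sum of $j\times j$ principal minors of $A$, and for $j$ odd each such minor is the determinant of an odd-order symmetric matrix with zero diagonal, which is even because over $\mathbb F_2$ that matrix is alternating, hence of even rank, hence singular.

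The bookkeeping with the determinant lemma and the coefficient extraction is routine; the load-bearing step is the symmetry reduction $\mathbf 1^\top\operatorname{adj}\mathbf 1 \equiv \tr\operatorname{adj} \pmod 2$ together with Theorem~\ref{thm:sumofsubpoly}, which collapses the problem to a single characteristic polynomial, after which the $\mathbb F_2$ alternating-matrix fact closes it. The main things to watch are the exact accounting of powers of $2$ under the substitution $x = 2y$ (so that the exponent in $2^{i-1}$ comes out right) and the degenerate small cases $i \in \{0,1,2\}$, which are cleanest to dispatch directly as above rather than through the general formula.
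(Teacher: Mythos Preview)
The paper does not prove Theorem~\ref{thm:basicDivisibility}; it is quoted from \cite[Section~3]{GreavesYatsyna19}, so there is no in-paper argument to compare against.

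Your argument is correct. Writing $B=S+I=J-2A$ and applying the matrix-determinant lemma gives
\[
\chi_B(2y)=2^{n}\det(yI+A)-2^{\,n-1}\,\mathbf 1^{\top}\operatorname{adj}(yI+A)\,\mathbf 1,
\]
from which $a_i=2^{\,i-1}(2r_i-s_i)$ follows exactly as you say (the case $i=0$ being handled separately since $s_0=0$). The reduction $\mathbf 1^{\top}\operatorname{adj}(yI+A)\,\mathbf 1\equiv \tr\operatorname{adj}(yI+A)\pmod 2$ via symmetry of the adjugate is legitimate coefficient-wise in $\mathbb Z[y]$, and identifying $\tr\operatorname{adj}(yI+A)$ with $\frac{\mathrm d}{\mathrm dy}\det(yI+A)$ through Theorem~\ref{thm:sumofsubpoly} (applied to $-A$) yields $s_i\equiv (n-i+1)\,r_{i-1}\pmod 2$ as claimed. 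Your two cases then cover precisely what is needed: for $n$ even and $i$ odd the factor $n-i+1$ is even, while for $i$ even (and any $n$) the factor $r_{i-1}$ is a sum of odd-order principal minors of $A$, each of which is even because a symmetric zero-diagonal matrix over $\mathbb F_2$ is alternating and hence of even rank. (For $n$ odd and $i$ even you could equally have invoked $n-i+1$ even, but your route through $r_{i-1}$ is fine.)

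One small cosmetic point: the passage ``$r_{i-1}\equiv [y^{n-i+1}]\det(yI-A)$'' tacitly uses $\det(yI+A)\equiv\det(yI-A)\pmod 2$; it might be worth saying so explicitly to avoid the appearance of a sign slip.
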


Motivated by the above theorem, we consider polynomials whose coefficients satisfy related modular conditions. 

\begin{dfn}\label{dfn:polytypes}
Let $p(x)=\sum_{i=0}^n a_i x^{n-i}$ be a monic polynomial in $\mathbb{Z}[x]$.
We say $p$ is \textbf{type $\mathbf 2$} if $2^i$ divides $a_i$ for all $i\geqslant 0$ and \textbf{weakly type $\mathbf 2$} if $2^{i-1}$ divides $a_i$ for all $i\geqslant 1$.
\end{dfn}

The next result follows from \cite[Lemma 3.1]{GreavesYatsyna19}.

\begin{lem}
\label{lem:shiftType}
Let $S$ be a Seidel matrix of order $n$ and $\kappa$ be an odd integer.
Then $\chi_{S-\kappa I}(x)$ is weakly type 2.
Furthermore, if $n$ is even then $\chi_{S-\kappa I}(x)$ is type 2.
\end{lem}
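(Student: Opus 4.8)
The plan is to reduce the statement to Theorem~\ref{thm:basicDivisibility} by a linear change of variable, and then to isolate a short general fact: translating a (weakly) type 2 polynomial by an \emph{even} integer preserves the property.

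First I would record the elementary identity $\chi_{A-cI}(x) = \det\bigl((x+c)I - A\bigr) = \chi_A(x+c)$, valid for any square matrix $A$ and scalar $c$. Taking $c=\kappa$ and then $c=-1$ gives $\chi_{S-\kappa I}(x) = \chi_S(x+\kappa)$ and $\chi_S(x) = \chi_{S+I}(x+1)$, so that
\[
  \chi_{S-\kappa I}(x) = \chi_{S+I}\bigl(x + (\kappa+1)\bigr).
\]
Because $\kappa$ is odd, the shift $m := \kappa+1$ is even. Writing $\chi_{S+I}(x) = \sum_{i=0}^n a_i x^{n-i}$, Theorem~\ref{thm:basicDivisibility} gives $a_0 = 1$ together with $2^{i-1}\mid a_i$ for every $i \geqslant 1$ (for even $i$ one in fact has $2^i \mid a_i$), and $2^i\mid a_i$ for all $i$ when $n$ is even. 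Hence $\chi_{S+I}$ is monic and weakly type 2, and it is type 2 when $n$ is even.

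It then remains to prove the following claim: if $p(x) = \sum_{i=0}^n a_i x^{n-i} \in \mathbb Z[x]$ and $m$ is an even integer, then $p(x+m)$ is type 2 whenever $p$ is, and weakly type 2 whenever $p$ is. I would expand $p(x+m) = \sum_{j=0}^n b_j x^{n-j}$ with $b_j = \sum_{i=0}^{j} a_i \binom{n-i}{j-i} m^{j-i}$, and bound the $2$-adic valuation $v_2$ of each summand. Since $v_2(m^{j-i}) \geqslant j-i$, in the type 2 case $v_2\bigl(a_i m^{j-i}\bigr) \geqslant i + (j-i) = j$, so $2^j \mid b_j$; in the weakly type 2 case, for $i \geqslant 1$ we get $v_2\bigl(a_i m^{j-i}\bigr) \geqslant (i-1) + (j-i) = j-1$, while the $i=0$ term equals $\binom{n}{j} m^j$, of valuation at least $j \geqslant j-1$, so $2^{j-1}\mid b_j$ for $j \geqslant 1$. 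Applying this claim to $p = \chi_{S+I}$ with the even shift $m = \kappa+1$ then yields the lemma.

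There is no genuine obstacle here; the only point requiring care is that the reduction works precisely because $\kappa$ odd makes $\kappa+1$ even --- an odd translation would destroy divisibility, matching the asymmetry in Theorem~\ref{thm:basicDivisibility} for $\chi_{S+I}$ itself --- after which the argument is routine valuation bookkeeping in the binomial expansion. (Alternatively, one could simply cite \cite[Lemma 3.1]{GreavesYatsyna19}, of which this lemma is the stated consequence, but the self-contained argument above is just as short.)
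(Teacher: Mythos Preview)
Your argument is correct. The identity $\chi_{S-\kappa I}(x)=\chi_{S+I}(x+\kappa+1)$ is right, Theorem~\ref{thm:basicDivisibility} does give that $\chi_{S+I}$ is weakly type~2 (type~2 when $n$ is even), and your valuation computation showing that an even translation preserves (weak) type~2 is clean and complete.

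As for comparison: the paper does not actually prove this lemma---it simply records that the result follows from \cite[Lemma~3.1]{GreavesYatsyna19}. Your write-up is a correct self-contained version of that deferred argument, and you already note the citation alternative yourself, so there is nothing to add.
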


Note the following equivalent definition of (weakly)-type-2 polynomials.
A monic integer polynomial $p(x)$ is type 2 if and only if $p(2x)/2^{\deg p} \in \mathbb{Z}[x]$ and is weakly type 2 if and only if $p(2x)/2^{\deg p-1} \in \mathbb{Z}[x]$.

Recall that the \textbf{content} $c(p)$ of a polynomial $p \in \mathbb{Z}[x]$ is the greatest common divisor of its coefficients.
For $p\in \mathbb{Q}[x]$, the content $c(p)$ is defined to be $c(vp)/v$ where $v\in \mathbb{N}$ satisfying $vp \in \mathbb{Z}[x]$.
The following lemma deals with the factorisation of type-2 and weakly-type-2 polynomials.

\begin{lem}\label{lem:type_factor}
Let $p\in \mathbb{Z}[x]$ be a monic polynomial.
Suppose $p=qr$ where $q,r \in \mathbb{Z}[x]$.
Then 
\begin{itemize}
    \item $p$ is type 2 if and only if $q$ and $r$ are both type 2;
    \item $p$ is weakly type 2 if and only if $q$ and $r$ are both weakly type 2 and at least one of them is type 2.
\end{itemize}
\end{lem}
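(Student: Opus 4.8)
The plan is to work with the reformulation stated just before the lemma: a monic integer polynomial $f$ of degree $m$ is type 2 iff $f(2x)/2^m \in \mathbb{Z}[x]$, and weakly type 2 iff $f(2x)/2^{m-1} \in \mathbb{Z}[x]$. Writing $\deg q = s$ and $\deg r = t$ (so $m = s+t$), and noting $q,r$ are monic since $p$ is, the identity $p(2x)/2^m = \bigl(q(2x)/2^s\bigr)\bigl(r(2x)/2^t\bigr)$ translates everything into a divisibility question about the polynomials $\tilde q(x) := q(2x)$, $\tilde r(x) := r(2x)$, $\tilde p(x) := p(2x)$ in $\mathbb{Z}[x]$. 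Concretely: $q$ is type 2 iff $2^s \mid c(\tilde q)$, $q$ is weakly type 2 iff $2^{s-1} \mid c(\tilde q)$, and similarly for $r$ and $p$. So the whole lemma reduces to understanding the $2$-adic valuation $v_2(c(\tilde p))$ in terms of $v_2(c(\tilde q))$ and $v_2(c(\tilde r))$.

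The key tool is Gauss's lemma on contents: $c(\tilde q \tilde r) = c(\tilde q)\, c(\tilde r)$, hence $v_2(c(\tilde p)) = v_2(c(\tilde q)) + v_2(c(\tilde r))$. It is also convenient to record the easy upper bounds $v_2(c(\tilde q)) \leqslant s$ and $v_2(c(\tilde r)) \leqslant t$, which follow because $\tilde q, \tilde r$ are monic of degrees $s,t$ after scaling — more precisely, the leading coefficient of $\tilde q$ is $2^s$, so $v_2(c(\tilde q)) \leqslant s$, and this bound is attained exactly when $q$ is type 2. With these facts the two bullet points become arithmetic. For the first: $p$ type 2 means $v_2(c(\tilde p)) \geqslant m = s+t$; combined with $v_2(c(\tilde q)) \leqslant s$ and $v_2(c(\tilde r)) \leqslant t$ and additivity, this forces $v_2(c(\tilde q)) = s$ and $v_2(c(\tilde r)) = t$, i.e.\ both factors type 2; the converse is immediate from additivity. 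For the second: $p$ weakly type 2 means $v_2(c(\tilde p)) \geqslant m - 1 = s + t - 1$; additivity plus the upper bounds $v_2(c(\tilde q)) \leqslant s$, $v_2(c(\tilde r)) \leqslant t$ force $v_2(c(\tilde q)) \geqslant s-1$ and $v_2(c(\tilde r)) \geqslant t-1$ (so both weakly type 2), and moreover the two valuations cannot both be one less than maximal, so at least one equals its maximum, i.e.\ is type 2. Conversely, if both are weakly type 2 and (say) $q$ is type 2, then $v_2(c(\tilde q)) = s$ and $v_2(c(\tilde r)) \geqslant t-1$, giving $v_2(c(\tilde p)) \geqslant s + t - 1$, as needed.

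The only subtlety worth flagging is that the upper bound $v_2(c(\tilde q)) \leqslant s$ is doing real work in both directions: without it, additivity alone would not prevent, say, $\tilde q$ from being over-divisible by $2$ while $\tilde r$ is under-divisible. So the main (minor) obstacle is simply to state and use cleanly that a monic polynomial of degree $s$ becomes, under $x \mapsto 2x$, a polynomial with leading coefficient exactly $2^s$, pinning down the content's $2$-valuation from above; everything else is Gauss's lemma together with elementary inequalities on non-negative integers. I would present the argument by first proving the reformulation-plus-bounds as a short preliminary paragraph, then dispatching each bullet in a couple of lines.
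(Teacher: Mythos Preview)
Your proof is correct and is essentially the same argument as the paper's: both hinge on Gauss's lemma (multiplicativity of content) applied to the factorisation $p(2x) = q(2x)\,r(2x)$, together with the upper bound on the $2$-part of the content coming from the leading coefficient $2^{\deg q}$ of $q(2x)$. The paper expresses this via clearing denominators in $q(2x)/2^{\deg q}$ and $r(2x)/2^{\deg r}$ with integers $u,v$ and tracking $uv \in \{1,2\}$, whereas you phrase it directly in terms of the $2$-adic valuation of the integer contents, which is slightly more streamlined but not substantively different.
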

\begin{proof}
Since $p$ is monic, both $q$ and $r$ are also monic.
Observe that $p(2x)/2^{\deg p}=\left(q(2x)/2^{\deg q}\right) \cdot \left(r(2x)/2^{\deg r}\right)$ and both $q(2x)/2^{\deg q}$ and $r(2x)/2^{\deg r}$ are monic polynomials in $\mathbb{Q}[x]$.
It follows that there exist positive integers $u$ and $v$ such that $u \cdot q(2x)/2^{\deg q}$ and $v \cdot r(2x)/2^{\deg r}$ are both in $\mathbb{Z}[x]$ each with content equal to $1$.
Since the content is multiplicative, we obtain
\begin{align}
\frac{uv}{2} \cdot c\left(\frac{p(2x)}{2^{\deg p-1}}\right)=c\left(uv \cdot \frac{p(2x)}{2^{\deg p}}\right)=
c\left(u \frac{q(2x)}{2^{\deg q}}\right) \cdot c\left(v \frac{r(2x)}{2^{\deg r}}\right)=1.
\label{eq:gauss}
\end{align}

If $q$ and $r$ are both type 2 then $p(2x)/2^{\deg p}=\left(q(2x)/2^{\deg q}\right) \cdot \left(r(2x)/2^{\deg r}\right) \in \mathbb{Z}[x]$.
Hence $p$ is type 2.
Conversely, suppose $p$ is type 2.
Then $p(2x)/2^{\deg p}$ is a monic polynomial in $\mathbb{Z}[x]$, which implies $c\left(p(2x)/2^{\deg p}\right)=1$.
Consequently, $p(2x)/2^{\deg p-1} \in \mathbb{Z}[x]$ and $c\left(p(2x)/2^{\deg p-1}\right)=2$.
By \eqref{eq:gauss}, we have $uv=1$ and hence $u=v=1$.
Therefore $q$ and $r$ are both type 2.

If $q$ and $r$ are both weakly type 2 and at least one of them is type 2, then
$$\frac{p(2x)}{2^{\deg p-1}}=\frac{q(2x)}{2^{\deg q-1}} \cdot \frac{r(2x)}{2^{\deg r}}=\frac{q(2x)}{2^{\deg q}} \cdot \frac{r(2x)}{2^{\deg r-1}} \in \mathbb{Z}[x].$$
Hence, $p$ is weakly type 2.
Conversely, suppose $p$ is weakly type 2.
Then $p(2x)/2^{\deg p-1} \in \mathbb{Z}[x]$ with leading coefficient 2, which implies $c\left(p(2x)/2^{\deg p-1}\right)$ is equal to 1 or 2.
If $c\left(p(2x)/2^{\deg p-1}\right)=2$, then
$p(2x)/2^{\deg p} \in \mathbb{Z}[x]$, i.e., $p$ is type 2 and hence $q$ and $r$ are both type 2, as above.
Otherwise, we must have $c\left(p(2x)/2^{\deg p-1}\right)=1$.
Then $uv=2$ by \eqref{eq:gauss}.
Hence $\{u,v\} = \{1,2\}$, which implies that both $q$ and $r$ are weakly type 2 and one of them is type 2.
\end{proof}

Denote by $\mathcal C_n$ the set of all Seidel matrices of order $n$.
Given a positive integer $e$, define the set $\mathcal P_{n,e} = \{ \chi_S(x) \mod 2^e \mathbb Z[x] \; | \; S \in \mathcal C_n \}$.
We will require the following upper bound on the cardinality of $\mathcal P_{n,e}$ for odd $n$.

\begin{theorem}[{\cite[Corollary 3.13]{GreavesYatsyna19}}]\label{thm:countCharPolySeidelOdd}
			Let $n$ be an odd integer and $e$ be a positive integer.
			Then the cardinality of $\mathcal P_{n,e}$ is at most $2^{\binom{e-2}{2}+1}$.
\end{theorem}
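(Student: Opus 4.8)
The plan is to reduce the count to a question about the matrices $B=S+I$, extract a closed formula for the coefficients of $\chi_{S+I}$ via the matrix determinant lemma, and then combine two elementary parity facts with a careful two-adic accounting.

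First, since $p(x)\mapsto p(x-1)$ is a ring automorphism of $\mathbb Z[x]/2^e\mathbb Z[x]$ preserving the set of monic polynomials of degree $n$, and $\chi_{S+I}(x)=\chi_S(x-1)$, we have $|\mathcal P_{n,e}|=\bigl|\{\,\chi_{S+I}(x)\bmod 2^e\mathbb Z[x]\;:\;S\in\mathcal C_n\,\}\bigr|$, so it suffices to count the characteristic polynomials of the matrices $B=S+I$. Each such $B$ is a symmetric matrix with diagonal entries $1$ and off-diagonal entries $\pm1$, hence $B=J-2A$ where $A=(J-B)/2$ is the adjacency matrix of a graph $\Gamma$ on $n$ vertices, and conversely every such $A$ occurs. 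Applying the matrix determinant lemma to $\chi_{S+I}(2x)=\det(2xI-J+2A)=\det\bigl(2(xI+A)-\mathbf 1\mathbf 1^\top\bigr)$ yields
\[
\chi_{S+I}(2x)=2^{\,n-1}\Bigl(2\det(xI+A)-\mathbf 1^\top\operatorname{adj}(xI+A)\,\mathbf 1\Bigr).
\]
Let $\gamma_l$ denote the coefficient of $y^{n-l}$ in $\chi_A(y)$ and put $w_m=\mathbf 1^\top A^m\mathbf 1$ (so $w_0=n$). Expanding $\operatorname{adj}(xI+A)$ by the Faddeev--LeVerrier recursion, the displayed identity becomes the coefficient formula
\[
a_j=(-1)^{j}\,2^{\,j-1}\bigl(2\gamma_j+W_{j-1}\bigr),
\qquad
W_{j-1}:=\sum_{l=0}^{j-1}\gamma_l\,w_{\,j-1-l},
\]
for the coefficients of $\chi_{S+I}(x)=\sum_j a_j x^{n-j}$ (with $a_0=1$ and $j\geqslant 1$). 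In particular, $\chi_{S+I}(x)\bmod 2^e$ is determined by the residues $(2\gamma_j+W_{j-1})\bmod 2^{\,e-j+1}$ for $1\leqslant j\leqslant e$; this is a bounded amount of data, independent of $n$, with the contributions for $j\in\{1,2\}$ fixed ($a_1=-n$, $a_2=0$).

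Two parity facts then reduce the count. First, $\gamma_l$ is even whenever $l$ is odd: by the Sachs coefficient theorem $\gamma_l$ is a signed count of spanning subgraphs on $l$ vertices that are disjoint unions of edges and cycles, each of which must use at least one cycle when $l$ is odd and hence contributes an even number --- equivalently, $\det A[T]$ is even for every index set $T$ of odd cardinality, since the fixed-point-free permutations contributing to $\operatorname{per}A[T]$ pair off under $\pi\leftrightarrow\pi^{-1}$ (using the symmetry of $A$) with no fixed point of this pairing in odd order. Second, $w_m$ is even for every $m\geqslant 1$: a closed-up walk of odd length equal to its own reversal would require a loop, so odd-length walks pair with their reverses, and $w_{2s}\equiv w_s\pmod 2$ reduces the even case to the odd one. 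Combining these, $W_{j-1}\equiv\gamma_{j-1}\pmod 2$, so $2\gamma_j+W_{j-1}$ is even when $j$ is even --- this recovers the extra factor of $2$ at even indices in Theorem~\ref{thm:basicDivisibility} --- and, by peeling further factors of $2$ off the walk counts $w_{\geqslant 1}$ and the odd-index $\gamma_l$, one obtains stronger congruences linking the residues at odd $j$.

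It remains to count. Theorem~\ref{thm:basicDivisibility} by itself bounds $|\mathcal P_{n,e}|$ by $2^{N(e)}$ with $N(e)=\binom{e-1}{2}-\lfloor e/2\rfloor+1$, which for $e\geqslant 5$ exceeds the target $\binom{e-2}{2}+1$ by $\lceil e/2\rceil-2\geqslant 1$. To harvest these extra bits I would express each $2\gamma_j+W_{j-1}$, to the required two-adic precision, as an explicit integer combination of a short list of low-order subgraph counts of $\Gamma$ (number of edges, of paths of length two, of triangles, of disjoint pairs of edges, of short cycles, and so on) taken modulo small powers of $2$, and then invoke the relations these counts satisfy --- for example $\gamma_2=-|E(\Gamma)|$, $\gamma_3=-2t$ (where $t$ is the number of triangles), $w_1=2|E(\Gamma)|$, the evenness of $w_{\geqslant 1}$, and their higher analogues --- to collapse the joint range of the residue tuple to at most $2^{\binom{e-2}{2}+1}$. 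For $e\leqslant 3$ at most one non-fixed datum survives and the count is at most $2$, matching the bound. The main obstacle is exactly this last accounting step: while the two parity facts above are immediate, organizing all of the two-adic relations among the Sachs coefficients and walk counts tightly enough to land on the precise exponent $\binom{e-2}{2}+1$, rather than a weaker bound, is the delicate part of the argument and is where the detailed computation of \cite{GreavesYatsyna19} does its work.
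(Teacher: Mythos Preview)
The paper does not prove this theorem; it is imported from \cite[Corollary~3.13]{GreavesYatsyna19} and stated without argument. So there is no in-paper proof to compare against, and your proposal must be judged on its own.

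On its own terms, your proposal is not a proof but a framework with an acknowledged hole. The setup is sound: the passage to $B=S+I=J-2A$, the matrix determinant lemma identity $\chi_{S+I}(2x)=2^{\,n-1}\bigl(2\det(xI+A)-\mathbf 1^\top\operatorname{adj}(xI+A)\,\mathbf 1\bigr)$, and the two parity facts ($\gamma_l$ even for odd $l$ via Sachs, and $w_m$ even for all $m\geqslant 1$ via your walk-reversal and $w_{2s}\equiv w_s$ reduction) are all correct up to cosmetic sign conventions in the Faddeev--LeVerrier expansion. These ingredients do recover Theorem~\ref{thm:basicDivisibility} and hence the bound $2^{N(e)}$ with your $N(e)=\binom{e-1}{2}-\lfloor e/2\rfloor+1$. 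But, as you yourself compute, this overshoots the target by $\lceil e/2\rceil-2$ bits once $e\geqslant 5$, and your final paragraph does not close that gap: you assert that further relations among subgraph counts ``collapse the joint range'' to the right size and then explicitly concede that ``organizing all of the two-adic relations \ldots\ tightly enough to land on the precise exponent \ldots\ is where the detailed computation of \cite{GreavesYatsyna19} does its work.'' That concession is honest and accurate --- those extra congruences are the actual content of the result --- but it means what you have written is the scaffolding plus a pointer to the literature, not an independent proof. To complete it you would need to exhibit, for each $e$, the specific $\lceil e/2\rceil-2$ additional two-adic dependencies among the residues $(2\gamma_j+W_{j-1})\bmod 2^{\,e-j+1}$, not merely gesture at the list of subgraph statistics that might supply them.
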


In \cite{GreavesYatsyna19}, the authors conjectured\footnote{The necessary condition $e \geqslant 3$ was not stated in the conjecture given in \cite{GreavesYatsyna19}.
Indeed, for all natural numbers $n$ and $e$ with $e < 3$, it is easy to see that $|\mathcal P_{n,e}| < 2^{\binom{e-2}{2}+1}$.} that, for all integers $e \geqslant 3$, there exists $N \in  \mathbb N$ such that $|\mathcal P_{n,e}| = 2^{\binom{e-2}{2}+1}$ for all odd $n > N$.
The conjecture remains open.
However, for reasonably small values of $n$ and $e$ (i.e., $n \leqslant 95$ and $e \leqslant 7$) we can generate all elements of $\mathcal P_{n,e}$ by randomly generating Seidel matrices of order $n$ until we obtain $2^{\binom{e-2}{2}+1}$ characteristic polynomials in distinct congruence classes modulo $2^e \mathbb Z[x]$.

\subsection{Polynomial enumeration algorithm}
\label{sec:pea}

In the subsequent sections we frequently need to generate totally-real (weakly)-type-2 polynomials whose top three coefficients are fixed.
We use an algorithm developed by McKee and Smyth (see \cite[Section 3]{MS04} or \cite[Section 4.3]{GreavesYatsyna19}).
In each step of the above algorithm, a range of values is found for the constant term of a polynomial.
Our modification is to apply divisibility ``checks'' to reduce the number of possible values for the constant term at each iteration.
To illustrate how the algorithm works, we provide a toy example, below.

Suppose we want to find all polynomials $f(x)=x^4-18x^3+112x^2+a_3x+a_4$ such that all roots of $f$ are real and $f$ is type 2.
Since $f$ is totally real, the derivative $f'(x)=4x^3-54x^2+224x+a_3$ must also be totally real.
Hence, $a_3\in \{-294,\dots,-264\}$.
Now, we use the fact that $f$ is type 2, that is, $8$ divides $a_3$ and $16$ divides $a_4$.
Since 8 divides $a_3$, there are only four possibilities for $a_3$, which are $-288$, $-280$, $-272$, and $-264$.
For each $a_3$, we can find the range of possible values for $a_4$ that ensures that $f$ is totally real.
When $a_3 = -288$ we must have $a_4 \in \{ 256,\dots,262\}$.
When $a_3 = -280$ we must have $a_4 \in \{ 223,\dots,242\}$.
When $a_3 = -272$ we must have $a_4 \in \{ 185,\dots,194\}$.
And when $a_3 = -264$ we must have $a_4 \in \{ 144 \}$.
Now we impose the condition that 16 divides $a_4$.
In total, we obtain 5 possible polynomials for $f$:
\begin{align*}
x^4-18x^3+112x^2-288x+256,& \quad x^4-18x^3+112x^2-280x+224,\\
x^4-18x^3+112x^2-280x+240,& \quad x^4-18x^3+112x^2-272x+192,\\
x^4-18x^3+112x^2-264x+144.&
\end{align*}

We apply this algorithm repeatedly throughout.
The computations were executed on a modern PC in SageMath~\cite{sage}, with the output independently verified using  Mathematica~\cite{mathematica} and Pari/GP~\cite{parigp}.
The total running time for all computations used in this paper is less than 22 minutes.
The time taken to construct the sets $\mathcal P_{n,e}$ in the relevant cases is not included in the running times given below.
A SageMath implementation of this algorithm is available from GitHub~\cite{github}. 

\section{Candidate characteristic polynomials}
\label{sec:ccp}
In this section, we enumerate all candidates for the characteristic polynomial of a Seidel matrix corresponding to $n$ equiangular lines in $\mathbb R^d$, for $(n,d) \in \{(29,14),(41,16),(75,19),(95,20)\}$.

\begin{lem}\label{lem:ev_mult}
Let $S$ be a Seidel matrix of order $n$ with smallest eigenvalue $\lambda_0 \in \mathbb Z$ of multiplicity $n-d > 1$.
Let $\kappa$ be a closest odd integer to $(d-n)\lambda_0/d$.
Define
$$
\theta := \min \left \{ \eta \in \mathbb N \; | \; \eta4^{(\eta-\gamma(n))/\eta} > n(n-1)-\lambda_0^2(n-d)+2\kappa \lambda_0 (n-d)+d \kappa^2 \right \},
$$
where $\gamma(n) = 1$ if $n$ is odd and $\gamma(n) = 0$, otherwise.
If $\theta \leqslant d$ then
$$
    \chi_S(x) = (x-\lambda_0)^{n-d}(x-\kappa)^{d+1-\theta} \phi(x),
$$
for some monic integer polynomial $\phi(x)$ of degree $\theta-1$.
\end{lem}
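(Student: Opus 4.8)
The structure of the claimed factorization $\chi_S(x) = (x-\lambda_0)^{n-d}(x-\kappa)^{d+1-\theta}\phi(x)$ tells us what to aim for: the factor $(x-\lambda_0)^{n-d}$ is already guaranteed by hypothesis, and the factor $\phi(x)$ of degree $\theta-1$ is arbitrary, so the real content is that $\kappa$ is an eigenvalue of $S$ with multiplicity at least $d+1-\theta$. Since $S$ is a real symmetric matrix with integer entries, $\kappa$ is automatically an integer eigenvalue once we know it is an eigenvalue, so nothing is lost by phrasing things this way. The plan is to bound the multiplicity of $\kappa$ from below by estimating how many eigenvalues of $S$ can be "far" from $\kappa$, using the trace of $(S-\kappa I)^2$ together with a lower bound on the absolute value of any nonzero, non-$(\lambda_0-\kappa)$ eigenvalue of $S - \kappa I$ coming from the (weakly)-type-2 property of $\chi_{S-\kappa I}(x)$.

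**Key steps.** First I would compute $\tr\!\big((S-\kappa I)^2\big) = \tr(S^2) - 2\kappa\,\tr(S) + n\kappa^2 = n(n-1) + n\kappa^2$, using that Seidel matrices have zero trace and $\tr(S^2)=n(n-1)$. Next, I peel off the contribution of the eigenvalue $\lambda_0$: it occurs with multiplicity $n-d$, contributing $(n-d)(\lambda_0-\kappa)^2$ to $\tr\!\big((S-\kappa I)^2\big)$. Expanding, $(n-d)(\lambda_0-\kappa)^2 = (n-d)\lambda_0^2 - 2(n-d)\kappa\lambda_0 + (n-d)\kappa^2$, so the remaining $d$ eigenvalues of $S-\kappa I$ (counted with multiplicity) satisfy
$$
\sum_{\mu \ne \lambda_0-\kappa} \mu^2 = n(n-1) + n\kappa^2 - (n-d)\lambda_0^2 + 2(n-d)\kappa\lambda_0 - (n-d)\kappa^2 = n(n-1) - \lambda_0^2(n-d) + 2\kappa\lambda_0(n-d) + d\kappa^2,
$$
which is exactly the quantity appearing on the right-hand side of the inequality defining $\theta$; call it $B$. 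Now by Lemma~\ref{lem:shiftType}, since $\kappa$ is odd, $\chi_{S-\kappa I}(x)$ is weakly type 2 (and type 2 if $n$ is even); this shift should be applied to the degree-$d$ cofactor $\chi_S(x)/(x-\lambda_0)^{n-d}$ shifted by $\kappa$ — one must check via Lemma~\ref{lem:type_factor} that the cofactor $p(x-\kappa)$ where $p$ collects the non-$\lambda_0$ eigenvalues is itself weakly type 2, since $(x-(\lambda_0-\kappa))^{n-d}$ is type 2 (as $\lambda_0-\kappa$ is even). Then if $\kappa$ has multiplicity exactly $m$ among these $d$ eigenvalues, the polynomial $\prod_{\mu\ne\lambda_0-\kappa,\ \mu\ne 0}(x-\mu)$ has degree $d-m$, is weakly type 2, has nonzero constant term, and hence that constant term (a product of $d-m$ nonzero even... actually nonzero integers whose $2$-adic valuations are controlled) has absolute value at least $2^{d-m-\gamma(n)}$ by the weakly-type-2 divisibility on the constant coefficient. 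This forces the $d-m$ nonzero values $|\mu|$ to have geometric mean at least $4^{(d-m-\gamma(n))/(d-m)}$, hence, by AM–GM on the squares, $B \geqslant \sum \mu^2 \geqslant (d-m)\cdot 4^{(d-m-\gamma(n))/(d-m)}$. Setting $\eta = d-m$, this says $\eta\, 4^{(\eta-\gamma(n))/\eta} \leqslant B$, so $\eta$ cannot exceed the threshold $\theta$ defining the minimum $\eta$ with $\eta\,4^{(\eta-\gamma(n))/\eta} > B$; thus $\eta = d - m < \theta$, i.e. $m > d - \theta$, i.e. $m \geqslant d+1-\theta$. Combined with $\theta \leqslant d$ this gives $m \geqslant 1$, so $\kappa$ genuinely is an eigenvalue and the factorization holds with $\phi$ of degree $d - (d+1-\theta) = \theta - 1$; integrality of $\phi$ follows from Gauss's lemma since $\chi_S$, $(x-\lambda_0)^{n-d}$, and $(x-\kappa)^{d+1-\theta}$ are all monic integer polynomials.

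**Main obstacle.** The routine parts are the trace computation and the bookkeeping of which factors are type 2 versus weakly type 2. The delicate step is the lower bound $\prod|\mu| \geqslant 2^{d-m-\gamma(n)}$ and its conversion, via $\prod \mu^2 \geqslant (\prod|\mu|)^{2/(d-m)}\cdot(d-m)$... more precisely via AM–GM $\sum\mu^2 \geqslant (d-m)\left(\prod\mu^2\right)^{1/(d-m)} \geqslant (d-m)\big(2^{2(d-m-\gamma(n))}\big)^{1/(d-m)} = (d-m)\,4^{(d-m-\gamma(n))/(d-m)}$, into the exact exponent $4^{(\eta-\gamma(n))/\eta}$ that appears in the statement; one must be careful that the constant term of the weakly-type-2 polynomial of degree $\eta = d-m$ has $2$-adic valuation at least $\eta - 1 = \eta - \gamma(n)$ when $n$ is odd and at least $\eta = \eta - \gamma(n)$ when $n$ is even (using the type-2 upgrade), and that none of these $\mu$ equals $\lambda_0 - \kappa$ by construction (so no double-counting with the already-extracted eigenvalues) — this is where the hypothesis that $\lambda_0$ has multiplicity exactly $n-d$ in $S$, together with $\lambda_0$ being the \emph{smallest} eigenvalue so that $\lambda_0 - \kappa$ is the unique occurrence of that value among the shifted eigenvalues we are tracking, gets used. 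I would also double-check the monotonicity needed to conclude "$\eta \geqslant \theta$ is impossible" from "$\eta\,4^{(\eta-\gamma(n))/\eta} \leqslant B$": the function $\eta \mapsto \eta\,4^{(\eta-\gamma(n))/\eta}$ need not be monotone for tiny $\eta$, so the cleanest phrasing is simply that $\theta$ is \emph{defined} as the least $\eta$ exceeding the bound, and we have exhibited an $\eta$ (namely $d-m$) that does not exceed it, with $\eta$ ranging over $\{0,1,\dots,d\}$; if $d - m \geqslant \theta$ we would need to rule out larger $\eta$ also satisfying the inequality, so I would instead argue directly that $d-m$, being a specific value with $(d-m)\,4^{(d-m-\gamma(n))/(d-m)} \leqslant B$, must be strictly less than $\theta$ by minimality of $\theta$ — which is valid as stated. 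That completes the proof.
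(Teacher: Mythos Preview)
Your approach is essentially the paper's: compute $\sum_{i=1}^d(\lambda_i-\kappa)^2 = B$ from the trace identities, use Lemma~\ref{lem:shiftType} and Lemma~\ref{lem:type_factor} to get a $2$-adic lower bound on the constant term of the relevant cofactor, then apply AM--GM. The paper packages this as a downward induction (peel off one copy of $\kappa$ at a time from $\eta=d$ to $\eta=\theta$), whereas you argue directly with the multiplicity $m$; both are fine and use the same ingredients.

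Two small corrections. First, you assert that $\lambda_0-\kappa$ is even without justification; this requires $\lambda_0$ to be odd, which follows from its multiplicity exceeding $1$ (the paper cites \cite[Theorem~2.2]{GKMS16} for this). Second, your closing sentence ``$d-m$ \dots\ must be strictly less than $\theta$ by minimality of $\theta$'' is not valid as stated: minimality of $\theta$ tells you nothing about whether some $\eta>\theta$ could also satisfy $\eta\,4^{(\eta-\gamma(n))/\eta}\leqslant B$. What you actually need---and what the paper explicitly states and uses---is that $\Psi(\eta)=\eta\,4^{(\eta-\gamma(n))/\eta}$ is increasing for $\eta>0$ (indeed $\frac{\mathrm d}{\mathrm d\eta}\log\Psi(\eta)=\tfrac{1}{\eta}+\tfrac{\gamma(n)\ln 4}{\eta^{2}}>0$), so that $\Psi(d-m)\leqslant B<\Psi(\theta)$ forces $d-m<\theta$. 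Your worry that $\Psi$ ``need not be monotone for tiny $\eta$'' is unfounded.
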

\begin{proof}
Since its multiplicity is greater than $1$, the eigenvalue $\lambda_0$ must be odd~\cite[Theorem 2.2]{GKMS16}.
Denote by $\lambda_1, \dots, \lambda_d$ the eigenvalues of $S$ not equal to $\lambda_0$.
Next, since $\tr S=0$ and $\tr S^2=n(n-1)$, we have
\[
	\sum_{i=1}^{d}\lambda_i = -\lambda_0(n-d) \ \ \ \ \ \text{ and } \ \ \ \ \  \sum_{i=1}^{d}\lambda_i^2 = n(n-1)-\lambda_0^2(n-d).
\]
Combining the above yields
\begin{align}\label{eq:dimd_l0ev}
\sum_{i=1}^{d}(\lambda_i-\kappa)^2=n(n-1)-\lambda_0^2(n-d)+2\kappa \lambda_0 (n-d)+d \kappa^2.
\end{align}
The minimum value of the polynomial $n(n-1)-\lambda_0^2(n-d)+2x \lambda_0 (n-d)+d x^2$ is attained when $x=(d-n)\lambda_0/d$.
Hence, the minimum value of $n(n-1)-\lambda_0^2(n-d)+2x \lambda_0 (n-d)+d x^2$ for $x$ an odd integer is attained when $x=\kappa$.

Let $\mathcal T=n(n-1)-\lambda_0^2(n-d)+2\kappa \lambda_0 (n-d)+d \kappa^2$ and let $\eta=d$.
From \eqref{eq:dimd_l0ev}, we have
$
\sum_{i=1}^{\eta}(\lambda_i-\kappa)^2=\mathcal T.
$
Suppose that $\eta$ satisfies 
$\eta4^{(\eta-\gamma(n))/\eta} > \mathcal T$.
By Lemma~\ref{lem:shiftType}, the characteristic polynomial
$$
\chi_{S-\kappa I}(x)=\chi_S(x+\kappa)=x^{d-\eta}(x-\lambda_0+\kappa)^{n-d}\prod_{i=1}^{\eta}(x-\lambda_i+\kappa)
$$
is weakly type 2 and is type 2 if $n$ is even.
By Lemma~\ref{lem:type_factor}, the polynomial $\prod_{i=1}^{\eta}(x-\lambda_i+\kappa)$ is also weakly type 2, or type 2 if $n$ is even.
In particular, the constant term $\prod_{i=1}^{\eta}(\lambda_i-\kappa)$ is divisible by $2^{\eta-\gamma(n)}$.
Thus, we write $\prod_{i=1}^{\eta}(\lambda_i-\kappa)=2^{\eta-\gamma(n)}\cdot k_{\eta}$ where $k_{\eta}\in \mathbb{Z}$.
Using the inequality of arithmetic and geometric means, we obtain
$$
\eta^{\eta} 4^{\eta-\gamma(n)} > \mathcal T^{\eta} = \left ( \sum\limits_{i=1}^{\eta}(\lambda_i-\kappa)^2\right ) ^{\eta} \geqslant {\eta}^{\eta}\prod_{i=1}^{\eta}(\lambda_i-\kappa)^2 = {\eta}^{\eta} 4^{\eta-\gamma(n)}\cdot k_{\eta}^2.
$$
This implies that $k_{\eta}=0$ and, without loss of generality, we can assume $\lambda_{\eta}=\lambda_d = \kappa$.
Then 
$$
\chi_{S-\kappa I}(x)=x^{d+1-\eta}(x-\lambda_0+\kappa)^{n-d}\prod_{i=1}^{\eta-1}(x-\lambda_i+\kappa).
$$
Furthermore, we have $\sum_{i=1}^{\eta-1}(\lambda_i-\kappa)^2=\mathcal T$ and, by Lemma~\ref{lem:type_factor}, the polynomial $\prod_{i=1}^{\eta-1}(x-\lambda_i+\kappa)$ is weakly type 2, or type 2 if $n$ is even.

Suppose $\theta \leqslant d$.
Note that for $\eta > 0$, the function $\Psi(\eta)=\eta 4^{(\eta-\gamma(n))/\eta}$ is increasing.
Hence, for each integer $\eta$ where $\theta \leqslant \eta \leqslant d$, we have $\eta 4^{(\eta-\gamma(n))/\eta} > \mathcal T$.
Inductively repeat the steps above from $\eta=d$ to $\eta=\theta$.
We obtain $\lambda_{\theta}=\dots=\lambda_d=\kappa$ and thus
$$
\chi_S(x)=(x-\lambda_0)^{n-d}(x-\kappa)^{d+1-\theta} \prod_{i=1}^{\theta-1} (x-\lambda_i),
$$
as required.
\end{proof}

\begin{remark}
It is interesting to note that the extremal case of Lemma~\ref{lem:ev_mult} (when $\theta = 1$) characterises Seidel matrices having precisely two distinct eigenvalues.
Such Seidel matrices correspond to regular two-graphs~\cite{taylor}.
\end{remark}

The next result follows immediately from Lemma~\ref{lem:ev_mult} and Theorem~\ref{thm:smallesteigenvalue}.

\begin{cor}\label{cor:ev_mult}
\begin{enumerate}
Let $S$ be a Seidel matrix.
    \item[(a)] If $S$ corresponds to 75 equiangular lines in $\mathbb{R}^{19}$ then
$
\chi_S(x)=(x+5)^{56}(x-15)^{12}\phi(x),
$
for some monic polynomial $\phi$ of degree 7 in $\mathbb{Z}[x]$.

    \item[(b)] If $S$ corresponds to 95 equiangular lines in $\mathbb{R}^{20}$ then
$
\chi_S(x)=(x+5)^{75}(x-19)^{13}\phi(x),
$
for some monic polynomial $\phi$ of degree 7 in $\mathbb{Z}[x]$.

    \item[(c)] If $S$ corresponds to 29 equiangular lines in $\mathbb{R}^{14}$ then 
$
\chi_S(x)=(x+5)^{15}(x-5)^4 \phi(x),
$
for some monic polynomial $\phi$ of degree 10 in $\mathbb{Z}[x]$.
    
    \item[(d)] If $S$ corresponds to 41 equiangular lines in $\mathbb{R}^{16}$ then 
$
\chi_S(x)=(x+5)^{25}(x-7)^3 \phi(x),
$
for some monic polynomial $\phi$ of degree 13 in $\mathbb{Z}[x]$.
\end{enumerate}
\end{cor}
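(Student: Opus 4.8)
The plan is to instantiate Lemma~\ref{lem:ev_mult} with the spectral data furnished by Theorem~\ref{thm:smallesteigenvalue}, once for each of the four pairs $(n,d)$. First, for every such pair any corresponding Seidel matrix $S$ has smallest eigenvalue $\lambda_0 = -5$ by Theorem~\ref{thm:smallesteigenvalue}, and the factorisation $\chi_S(x) = (x+5)^{n-d}p(x)$ recorded there (with all zeros of $p$ strictly larger than $-5$) shows that $\lambda_0$ has multiplicity exactly $n-d$. Since $n-d \in \{15, 25, 56, 75\}$, in particular $n - d > 1$, so the hypotheses of Lemma~\ref{lem:ev_mult} are satisfied in all four cases, with $\lambda_0 = -5 \in \mathbb Z$.

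Next I would carry out the finite arithmetic prescribed by Lemma~\ref{lem:ev_mult}. The quantity $(d-n)\lambda_0/d = 5(n-d)/d$ equals $280/19 \approx 14.74$, $375/20 = 18.75$, $75/14 \approx 5.36$, and $125/16 = 7.8125$ for $(n,d) = (75,19),\ (95,20),\ (29,14),\ (41,16)$ respectively, so the nearest odd integer $\kappa$ is $15$, $19$, $5$, and $7$. Substituting $\lambda_0 = -5$ and these values of $\kappa$ into $\mathcal T := n(n-1)-\lambda_0^2(n-d)+2\kappa\lambda_0(n-d)+d\kappa^2$ gives $\mathcal T = 25$ for $(75,19)$ and $(95,20)$, $\mathcal T = 37$ for $(29,14)$, and $\mathcal T = 49$ for $(41,16)$.

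It then remains to evaluate $\theta$, the least $\eta \in \mathbb N$ with $\Psi(\eta) := \eta\, 4^{(\eta-\gamma(n))/\eta} > \mathcal T$; since $n$ is odd in every case, $\gamma(n) = 1$. Using the monotonicity of $\Psi$ noted in the proof of Lemma~\ref{lem:ev_mult}, it suffices to compare $\Psi$ at consecutive integers: $\Psi(7) = 7\cdot 4^{6/7} < 25 < 8\cdot 4^{7/8} = \Psi(8)$ gives $\theta = 8$ when $\mathcal T = 25$; $\Psi(10) < 37 < \Psi(11)$ gives $\theta = 11$ when $\mathcal T = 37$; and $\Psi(13) < 49 < \Psi(14)$ gives $\theta = 14$ when $\mathcal T = 49$. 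In each case $\theta \leqslant d$ (indeed $8 \leqslant 19$, $8 \leqslant 20$, $11 \leqslant 14$, $14 \leqslant 16$), so Lemma~\ref{lem:ev_mult} applies and yields $\chi_S(x) = (x-\lambda_0)^{n-d}(x-\kappa)^{d+1-\theta}\phi(x)$ with $\phi$ monic of degree $\theta - 1$ in $\mathbb Z[x]$. Reading off the exponent $d+1-\theta$ and the degree $\theta-1$ recovers the four factorisations claimed in (a)--(d), with $(d+1-\theta,\ \theta-1)$ equal to $(12,7)$, $(13,7)$, $(4,10)$, and $(3,13)$ respectively.

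There is essentially no obstacle here: the argument is a bounded computation once Theorem~\ref{thm:smallesteigenvalue} and Lemma~\ref{lem:ev_mult} are in hand. The two points that need care are that $\kappa$ must be the nearest \emph{odd} integer --- which matters for $(41,16)$, where $7.8125$ lies closest to the even integer $8$ --- and that $\theta$ is cut out by a \emph{strict} inequality, so the comparisons between $\Psi(\eta)$ and $\mathcal T$ must be made with enough numerical precision to pin down which side of $\mathcal T$ each value falls on. The integrality of $\phi$, and the fact that no further eigenvalues equal to $\kappa$ occur beyond the recorded multiplicity $d+1-\theta$, are both delivered directly by Lemma~\ref{lem:ev_mult}.
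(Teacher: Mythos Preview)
Your proof is correct and follows exactly the route the paper indicates: the corollary ``follows immediately from Lemma~\ref{lem:ev_mult} and Theorem~\ref{thm:smallesteigenvalue}'', and you have simply supplied the arithmetic that verifies $\kappa$, $\mathcal T$, and $\theta$ in each of the four cases. One small quibble: your final remark that Lemma~\ref{lem:ev_mult} guarantees ``no further eigenvalues equal to $\kappa$ occur beyond the recorded multiplicity $d+1-\theta$'' overstates what the lemma asserts (it does not preclude $\phi(\kappa)=0$), but this is not needed for the corollary and does not affect your argument.
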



Let $S$ be a Seidel matrix corresponding to an equiangular line system of cardinality $n$ in $\mathbb R^d$, where $(n,d) \in \{ (29,14), (41,16), (75,19), (95,20) \}$.
By Theorem~\ref{thm:smallesteigenvalue}, in each case, the smallest eigenvalue of $S$ is $\lambda_0 = -5$.
We can determine $\kappa$ and $\theta$ (as defined in Lemma~\ref{lem:ev_mult}) for each $(n,d)$. 
In view of Corollary~\ref{cor:ev_mult}, the next step is to find feasible polynomials for $\phi(x)=\sum_{t=0}^{\theta-1} b_t x^{\theta-1-t}$ where
$$
    \chi_S(x) = (x-\lambda_0)^{n-d}(x-\kappa)^{d+1-\theta} \phi(x).
$$
Obviously $b_0 = 1$.
And we can find $b_1$ and $b_2$, using a basic fact about Seidel matrices: the traces of $S$ and $S^2$ are expressed in terms of $n$ as $\tr S=0$ and $\tr S^2=n(n-1)$.
Hence, using Newton's identities, we have $b_1=\lambda_0(n-d)+\kappa(d+1-\theta)$ and $b_2=\left(b_1^2+\lambda_0^2(n-d)+\kappa^2(d+1-\theta)-n(n-1)\right)/2$.

By Lemma~\ref{lem:shiftType}, since in each case $n$ is odd, the polynomial
$$
\chi_S(x-1)=(x-\lambda_0-1)^{n-d}(x-\kappa-1)^{d+1-\theta}\phi(x-1)
$$
is weakly type 2.
By Lemma~\ref{lem:type_factor}, the polynomial $\phi(x-1)$ is also weakly type 2.
Thus, we need to find all totally-real, integer polynomials $\phi(x)$ with the following properties:
\begin{enumerate}[label=(\roman*)]
\item $b_0=1$, $b_1=\lambda_0(n-d)+\kappa(d+1-\theta)$, and $b_2=\left(b_1^2+\lambda_0^2(n-d)+\kappa^2(d+1-\theta)-n(n-1)\right)/2$,
\item $\phi(x-1)$ is weakly type 2.
\item $(x-\lambda_0)^{n-d}(x-\kappa)^{d+1-\theta} \phi(x)$ belongs to a congruence class in $\mathcal{P}_{n,7}$.\label{itm:iv}
\end{enumerate}

Let $\Phi_{n,d}$ be the set of polynomials satisfying these properties.
We use the polynomial generation algorithm of Section~\ref{sec:pea} to compute $\Phi_{n,d}$ for each $(n,d) \in \{ (29,14), (41,16), (75,19), (95,20) \}$ and we form the set $P_{n,d}$ from elements of $\Phi_{n,d}$ multiplied by $(x-\lambda_0)^{n-d}(x-\kappa)^{d+1-\theta}$.
We list the result of our computations in the following proposition.

\begin{prop}
\label{prop:candpols}
    Let $(n,d) \in \{ (29,14), (41,16), (75,19), (95,20) \}$ and let $S$ be a Seidel matrix corresponding to $n$ equiangular lines in $\mathbb R^d$.
     Then $\chi_S(x) \in P_{n,d}$.
     The set $P_{n,d}$ is described explicitly below.
     
    \begin{itemize}
        \item $P_{75,19}$ consists of the elements of
        \[
        E_{75,19} = \{ (x+5)^{56}(x-13)^4(x-15)^{14}(x-18), \quad  (x+5)^{56}(x-10)(x-15)^{18} \}
        \]
        together with the six polynomials listed in Table~\ref{tab:6polydim19}.
        (Running time: 0.52 seconds.)
        
        \item $P_{95,20}$ consists of the elements of
        \[
        E_{95,20} = \{ (x+5)^{75}(x-17)^4(x-19)^{15}(x-22), \quad (x+5)^{75}(x-14)(x-19)^{19} \}
        \]
        together with the six polynomials listed in Table~\ref{tab:6polydim20}.
        (Running time: 0.61 seconds.)
        
        \item $P_{29,14}$ consists of the elements of 
        
        \[
            E_{29,14} = \left \{ \begin{array}{l}
                (x+5)^{15}(x-5)^{10}(x-7)^2(x^2-11x+16), \\
                (x+5)^{15}(x-3)(x-5)^9(x-7)^2(x^2-13x+32), \\
                (x+5)^{15}(x-5)^{10}(x-7)(x^3-18x^2+93x-128), \\
                (x+5)^{15}(x-3)(x-5)^{11}(x^2-17x+68), \\
                (x+5)^{15}(x-3)^2(x-5)^8(x-7)^2(x^2-15x+52), \\
                (x+5)^{15}(x-3)(x-4)(x-5)^{10}(x-9)^2
            \end{array}
            \right \}
        \]
together with the 25 polynomials listed in Table~\ref{tab:25polydim14}.
(Running time: 69.33 seconds.)

        \item $P_{41,16}$ consists of the elements of
        \[
        E_{41,16} = \{ (x+5)^{25}(x-7)^9(x-9)^4(x-11)(x^2-15x+48), \;  (x+5)^{25}(x-3)(x-7)^6(x-8)(x-9)^8 \}
        \]
         together with the 20 polynomials listed in Table~\ref{tab:20polydim16}.
        (Running time: 395.43 seconds.)
    \end{itemize}
\end{prop}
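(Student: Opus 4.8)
The plan is to execute the reduction set out in the paragraphs immediately preceding the statement and then to certify that the resulting finite search returns exactly the listed polynomials. For each of the four pairs $(n,d)$, Theorem~\ref{thm:smallesteigenvalue} fixes $\lambda_0 = -5$, and Corollary~\ref{cor:ev_mult} then gives
$$
\chi_S(x) = (x-\lambda_0)^{n-d}(x-\kappa)^{d+1-\theta}\,\phi(x),
$$
with $(\kappa,\theta) = (15,8),\ (19,8),\ (5,11),\ (7,14)$ respectively and $\phi$ monic of degree $\theta-1$. Writing $\phi(x) = \sum_{t=0}^{\theta-1} b_t x^{\theta-1-t}$, I would use $b_0 = 1$ together with $\tr S = 0$ and $\tr S^2 = n(n-1)$ (via Newton's identities) to obtain the stated closed forms for $b_1$ and $b_2$; this is property (i). Property (ii) follows because $n$ is odd in each case, so $\chi_S(x-1)$ is weakly type~$2$ by Lemma~\ref{lem:shiftType}, and the factorisation above together with Lemma~\ref{lem:type_factor} forces the factor $\phi(x-1)$ to be weakly type~$2$ as well (the complementary factor $(x-\lambda_0-1)^{n-d}(x-\kappa-1)^{d+1-\theta}$ being a product of powers of linear, hence type-$2$, polynomials). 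Property~\ref{itm:iv} holds by the definition of $\mathcal P_{n,7}$.

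The next step is to produce $\mathcal P_{n,7}$ explicitly. Theorem~\ref{thm:countCharPolySeidelOdd} gives $|\mathcal P_{n,7}| \leqslant 2^{\binom{5}{2}+1} = 2048$, so by randomly generating Seidel matrices of order $n$ and recording their characteristic polynomials modulo $2^7\mathbb Z[x]$ until $2048$ distinct congruence classes appear, one recovers all of $\mathcal P_{n,7}$. With this set fixed, I would run the polynomial enumeration algorithm of Section~\ref{sec:pea} on $\psi(x) := \phi(x-1)$, whose top three coefficients are determined by $b_0,b_1,b_2$: since $\chi_S$, and hence $\psi$ and all of its derivatives, is totally real, the determination of the remaining coefficients of $\psi$ becomes a finite, interval-by-interval search exactly as illustrated in the toy example, while the weakly-type-$2$ divisibility ``checks'' coming from (ii) and the membership test~\ref{itm:iv} prune most branches at each level. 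The search tree is finite, so the algorithm terminates with the full list of admissible $\psi$; shifting back to $\phi$ and multiplying by $(x-\lambda_0)^{n-d}(x-\kappa)^{d+1-\theta}$ produces $P_{n,d}$, and one reads off the elements of $E_{n,d}$ displayed in the statement together with the tabulated polynomials.

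The main obstacle is computational certification rather than any mathematical subtlety: one must be confident that the enumeration is genuinely exhaustive. This rests on (a) the fact that for these values of $n$ the bound $|\mathcal P_{n,7}| = 2048$ is actually attained, so that random sampling provably recovers the whole of $\mathcal P_{n,7}$, and (b) that the divisibility checks are only ever used to discard candidates that cannot satisfy (ii), so that no admissible $\phi$ is lost in the pruning. The degree-$13$ case $(n,d) = (41,16)$ is the most delicate, since the interlacing bounds are loosest and the tree largest there, which accounts for the roughly $395$ seconds of running time; this is where an implementation error would be hardest to detect, so rerunning the enumeration independently in Mathematica and Pari/GP and checking directly that each listed polynomial is totally real and satisfies (i)--(iii) provide the needed safeguards.
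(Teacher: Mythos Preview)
Your proposal is correct and follows essentially the same route as the paper: you invoke Theorem~\ref{thm:smallesteigenvalue} and Corollary~\ref{cor:ev_mult} to obtain the factorisation with the correct values of $(\kappa,\theta)$, derive properties (i)--\ref{itm:iv} exactly as in the text preceding the proposition, and then appeal to the polynomial enumeration algorithm of Section~\ref{sec:pea} together with the explicitly generated set $\mathcal P_{n,7}$. Your additional remarks on computational certification (attainment of the bound $2048$, independent verification in Mathematica and Pari/GP) mirror the safeguards the paper itself records.
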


The running times given in Proposition~\ref{prop:candpols} are for a SageMath implementation~\cite{github} of the polynomial enumeration algorithm running on a modern PC.

\section{Seidel matrices having a prescribed characteristic polynomial}
\label{sec:prescribed}
In this section, we describe a procedure for showing the nonexistence of a Seidel matrix having characteristic polynomial $p(x)$, where $p(x)$ is some fixed polynomial.
		
Let $S$ be a Seidel matrix of order $n$ with characteristic polynomial $\chi_S(x) = p(x)$.
We define $m_S(x) :=\sum_{t=0}^{\delta} a_t x^{\delta-t}$ to be its minimal polynomial of degree $\delta$ and let $\mu_S(x)=\chi_S(x)/m_S(x)$.

\subsection{Interlacing characteristic polynomials}

\label{sec:icp}

Let $i\in \{1,2,\dots,n\}$.
By Lemma~\ref{lem:coeff_of_interlacepoly}, we have
$\chi_{S[i]}(x)=\mu_S(x)\cdot f(x)$ for some polynomial $f(x)=\sum_{t=0}^{\delta-1} b_t x^{\delta-1-t}$ where
$b_0=1$, $b_1=a_1$, and $b_2=a_2+n-1$.
We want to find an exhaustive list of all possibilities for the polynomial $\chi_{S[i]}(x)$.

By Theorem~\ref{thm:basicDivisibility}, the polynomial $\chi_{S[i]}(x-1)$ is weakly type 2 and is type 2 if $n-1$ is even.
By Lemma~\ref{lem:type_factor}, the polynomial $f(x-1)$ is also weakly type 2 and is type 2 if $n-1$ is even.
Thus, we need to find all totally-real, integer polynomials $f(x)=\sum_{t=0}^{\delta-1} b_t x^{\delta-1-t}$ with the following properties:
\begin{enumerate}[label=(\roman*)]
\item $b_0=1$, $b_1=a_1$, $b_2=a_2+n-1$,
\item $f(x)$ interlaces $m_S(x)$,
\item $f(x-1)$ is weakly type 2 and is type 2 if $n-1$ is even,
\item $\mu_{S}(x)\cdot f(x)$ is in a congruence class of $\mathcal{P}_{n-1,7}$, if $n-1$ is odd.
\end{enumerate}

Let $F$ be the set of polynomials satisfying these properties.
We use the polynomial generation algorithm of Section~\ref{sec:pea} to construct $F$.
The set $\mathfrak F := \{ \mu_S(x)\cdot f(x) \; | \; f(x) \in F \}$ is called the set of \textbf{interlacing characteristic polynomials} for $p(x)$.

\subsection{Certificates of infeasibility and interlacing configurations}

The \textbf{coefficient vector} of a polynomial $h(x)=\sum_{t=0}^{n-1} c_{t} x^{n-1-t}$ of degree $n-1$ is defined to be the (row) vector $(c_0,c_1,\dots,c_{n-1})$.
Given a set $H = \{h_1,\dots,h_k\}$ of polynomials each of degree $n - 1$, the \textbf{coefficient matrix} $A(H)$ is defined as the $k \times n$ matrix whose $i$th row is the coefficient vector for $h_i$.
We write $\mathbf x \geqslant \mathbf 0$ to indicate that all entries of the vector $\mathbf x$ are nonnegative.
The polynomial equation \eqref{eqn:thm} can be viewed as a linear system: $\mathbf x^\top A = \mathbf b^\top$, where $\mathbf x \geqslant \mathbf 0$.
Indeed, for a real symmetric matrix $M$ of order $n$, let $X = \{ \chi_{M[i]}(x) \; | \; i \in \{1,\dots,n\}\}$ and let $\mathbf x$ be the vector indexed by elements of $X$ such that the entry indexed by $\mathfrak p (x) \in X$ equals the cardinality of the set $\{ i \in \{1,\dots,n\} \; | \; \chi_{M[i]}(x) = \mathfrak p (x) \}$.
If $A = A(X)$ is the coefficient matrix for the polynomials in $X$ and $\mathbf b$ is the coefficient vector for $\frac{\mathrm{d}}{\mathrm{d}x} \chi_M(x)$, then \eqref{eqn:thm} becomes $\mathbf x^\top A = \mathbf b^\top$.

\begin{theorem}[Farkas' Lemma {\cite{farkas}}]\label{thm:farkas}
Let $A$ be a real $n \times m$ matrix and let $\mathbf{b}\in \mathbb{R}^m$, $\mathbf{x} \in \mathbb{R}^n$.
Then the linear system
$$
\mathbf{x}^\top A=\mathbf{b}^\top,\: \mathbf{x} \geqslant \mathbf{0}
$$
has no solution if and only if the linear system
$$
A\mathbf{y}\geqslant\mathbf{0},\: \mathbf{y}^\top\mathbf{b} <0
$$
has a solution, where $\mathbf{y}\in \mathbb{R}^m$.
\end{theorem}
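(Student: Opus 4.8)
The statement is the classical Farkas' Lemma; since the paper merely cites it, one legitimate ``proof'' is to quote a reference, but here is how I would establish it from first principles. The natural route is a separating-hyperplane argument applied to the cone generated by the rows of $A$.

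First I would dispatch the easy direction: the two systems cannot both be solvable. If $\mathbf{x}\geqslant\mathbf{0}$ satisfies $\mathbf{x}^\top A=\mathbf{b}^\top$ and $\mathbf{y}$ satisfies $A\mathbf{y}\geqslant\mathbf{0}$ and $\mathbf{y}^\top\mathbf{b}<0$, then
\[
0 > \mathbf{y}^\top\mathbf{b} = \mathbf{b}^\top\mathbf{y} = \mathbf{x}^\top A\mathbf{y} = \mathbf{x}^\top(A\mathbf{y}) \geqslant 0,
\]
a contradiction, since every entry of $\mathbf{x}$ and of $A\mathbf{y}$ is nonnegative. Hence at most one of the two systems has a solution, which proves ``if the second is solvable, the first is not''.

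For the converse — the substance of the lemma — suppose the first system has no solution. Regard the rows $\mathbf{a}_1,\dots,\mathbf{a}_n$ of $A$ as vectors in $\mathbb{R}^m$ and let $C=\{\sum_{i=1}^n x_i\mathbf{a}_i : x_i\geqslant 0\}$ be the convex cone they generate; the hypothesis says precisely that $\mathbf{b}\notin C$. The plan is: (i) show $C$ is a closed subset of $\mathbb{R}^m$; (ii) apply the separation theorem for a point and a closed convex set to get $\mathbf{y}\in\mathbb{R}^m$ with $\mathbf{y}^\top\mathbf{b}<\inf_{\mathbf{c}\in C}\mathbf{y}^\top\mathbf{c}$; (iii) since $\mathbf{0}\in C$ this infimum is $\leqslant 0$, so $\mathbf{y}^\top\mathbf{b}<0$, and since $C$ is a cone the infimum is either $0$ or $-\infty$, hence $0$, giving $\mathbf{y}^\top\mathbf{c}\geqslant 0$ for all $\mathbf{c}\in C$; (iv) specialising to $\mathbf{c}=\mathbf{a}_i$ yields $(A\mathbf{y})_i=\mathbf{a}_i^\top\mathbf{y}\geqslant 0$ for every $i$, i.e.\ $A\mathbf{y}\geqslant\mathbf{0}$, while $\mathbf{y}^\top\mathbf{b}<0$, so $\mathbf{y}$ solves the second system.

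The main obstacle is step (i): a finitely generated cone is closed. I would prove this by a Carath\'eodory-type reduction, showing that every point of $C$ is already a nonnegative combination of a linearly independent subset of $\{\mathbf{a}_1,\dots,\mathbf{a}_n\}$; consequently $C$ is a finite union of ``simplicial'' cones $\mathrm{cone}(\mathbf{a}_i : i\in I)$ with $\{\mathbf{a}_i\}_{i\in I}$ independent, each of which is the image of the closed orthant $\mathbb{R}^{|I|}_{\geqslant 0}$ under a linear injection and is therefore closed, and a finite union of closed sets is closed. An alternative that sidesteps all topology is Fourier--Motzkin elimination on the variables $x_1,\dots,x_n$ of the system $\mathbf{x}^\top A=\mathbf{b}^\top$, $\mathbf{x}\geqslant\mathbf{0}$: infeasibility surfaces as a nonnegative combination of the constraints equal to a negative constant, and bookkeeping of the elimination multipliers produces the desired $\mathbf{y}$; this is more computational but entirely elementary. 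Everything else in the argument is routine linear algebra together with the standard separation theorem.
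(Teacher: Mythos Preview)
Your proposal is correct. The paper does not prove Farkas' Lemma at all; it is stated as Theorem~\ref{thm:farkas} with a citation to the original source and then applied as a black box. You correctly anticipate this in your opening sentence, and the separating-hyperplane argument you supply is a standard and complete proof of the cited result.
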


Theorem~\ref{thm:farkas} allows us to demonstrate that there is no vector $\mathbf x \geqslant \mathbf 0$ satisfying $\mathbf x^\top A = \mathbf b^\top$, by finding a vector $\mathbf y \in \mathbb R^m$ such that $A\mathbf{y}\geqslant\mathbf{0}$ and $\mathbf{y}^\top \mathbf{b} <0$.
We call such a vector $\mathbf y$ a \textbf{certificate of infeasibility} for the linear system $\mathbf{x}^\top A=\mathbf{b}^\top,\: \mathbf{x} \geqslant \mathbf{0}$.\textbf{}

Consider the set $\mathfrak{F} = \{\mathfrak f_1(x),\dots,\mathfrak f_k(x)\}$ of interlacing characteristic polynomials of $S$.
For each $i \in \{1,\dots,k\}$, we have $\mathfrak{f}_i(x) = \mu_S(x)f_i(x)$ for some polynomial $f_i(x) = \sum_{t=0}^{\delta-1} b_{i,t} x^{\delta-1-t}$.
Let $F = \{ f_1(x),$ $\dots,$ $f_k(x) \}$ and let $A=A(F)$.
By Theorem~\ref{thm:sumofsubpoly}, there exist nonnegative integers $n_1,n_2,\dots,n_k$ such that
\begin{equation}\label{eq:sumofsubpoly_dim19}
\mu_S(x)\sum_{i=1}^k n_i \cdot f_i(x)=\frac{\mathrm{d}}{\mathrm{d}x} \chi_S(x).
\end{equation}
Then there exists $g \in \mathbb{Z}[x]$ such that $\frac{\mathrm{d}}{\mathrm{d}x} \chi_S(x)=\mu_S(x)\cdot g(x)$ and \eqref{eq:sumofsubpoly_dim19} can be simplified to $\sum_{i=1}^k n_i\cdot f_i(x)=g(x)$.

Let $\mathbf{n}^\top=(n_1,n_2,\dots,n_k)$.
Then $\mathbf n \geqslant \mathbf 0$ is a solution to the linear system $\mathbf{n}^\top A=\mathbf{g}^\top$ where $\mathbf{g}^\top = (g_0,\dots,g_{\delta-1})$ is the coefficient vector for $g(x)=\sum_{t=0}^{\delta-1} g_{t} x^{\delta-1-t}$.
We call the vector $\mathbf n$ the \textbf{interlacing configuration} for $\mathfrak F$.
Thus, to show that no Seidel matrix $S$ exists having $\chi_S(x) = p(x)$, it suffices to show that there does not exist an interlacing configuration for $\mathfrak F$.
Hence, it suffices to provide a certificate of infeasibility $\mathbf c$ for the linear system above.
We call $\mathbf c$ a \textbf{certificate of infeasibility} for $p(x)$.

Suppose the linear system $\mathbf{n}^\top A=\mathbf{g}^\top$ has at least one nonnegative real solution.
Then a polynomial $\mathfrak f \in \mathfrak F$ is called a \textbf{warranted} interlacing characteristic polynomial if there is no nonnegative solution $\mathbf n$ to the subsystem  $\mathbf{n}^\top A^\prime=\mathbf{g}^\top$, where the matrix $A^\prime$ is obtained from $A$ by removing the row corresponding to the polynomial $\mathfrak f$.
We can show that an interlacing characteristic polynomial is warranted by providing a certificate of infeasibility $\mathbf c$ for this subsystem.
We call such a $\mathbf c$ a \textbf{certificate of warranty} for the interlacing characteristic polynomial $\mathfrak f (x)$.
Equivalently, we have that the one entry of $ A\mathbf c$ that corresponds to $\mathfrak f$ is negative, while the rest of the entries of $A\mathbf c$, which correspond to the entries of $A'\mathbf c$, are nonnegative.

\subsection{Eigenspaces, angles, and compatibility}

In this section, we introduce the notion of compatibility for interlacing characteristic polynomials.
This notion is used in Section~\ref{sec:dim1416}.
	Let $M$ be a real symmetric matrix of order $n$.
		We write $\Lambda(M) = \{\lambda_1,\dots,\lambda_m\}$ for the set of distinct eigenvalues of $M$.
		For each $i \in \{1,\dots,m\}$, denote by $\mathcal{E}(\lambda_i)$ the eigenspace of $\lambda_i$ and let $\{\mathbf{e}_1,\dots,\mathbf{e}_n\}$ be the standard basis of $\mathbb R^n$.
		Denote by $P_\lambda$ the orthogonal projection of $\mathbb R^n$ onto $\mathcal{E}(\lambda)$.
		For a vector $\mathbf v$, we write $\mathbf v(i)$ to denote its $i$th entry.
	\begin{theorem}[Spectral Decomposition Theorem]\label{thm:spec_decomp}
Let $M$ be a real symmetric matrix and let $\Lambda(M)=\{\lambda_1,\dots,\lambda_{m}\}$.
Then
$$
M=\lambda_1 P_{\lambda_1}+\dots+\lambda_{m} P_{\lambda_{m}}.
$$
\end{theorem}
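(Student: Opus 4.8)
The plan is to reduce the statement to the orthogonal diagonalisability of real symmetric matrices, which I will take as the one nontrivial input. First I would recall (or, if one wants to be self-contained, cite) the spectral theorem: a real symmetric matrix $M$ of order $n$ has only real eigenvalues and admits an orthonormal basis of $\mathbb R^n$ consisting of eigenvectors of $M$. If a proof from scratch were wanted, this follows by induction on $n$, using that $M$ has a real eigenvector $\mathbf u$ and that the orthogonal complement $\mathbf u^\perp$ is $M$-invariant precisely because $M$ is symmetric.

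Next I would note that eigenvectors belonging to distinct eigenvalues are orthogonal: if $M\mathbf u = \lambda_i \mathbf u$ and $M\mathbf w = \lambda_j \mathbf w$ with $\lambda_i \ne \lambda_j$, then $\lambda_i \mathbf u^\top \mathbf w = (M\mathbf u)^\top \mathbf w = \mathbf u^\top (M\mathbf w) = \lambda_j \mathbf u^\top \mathbf w$, which forces $\mathbf u^\top \mathbf w = 0$. Combining this with the existence of an orthonormal eigenbasis shows that $\mathbb R^n$ decomposes as the orthogonal direct sum $\mathcal E(\lambda_1) \oplus \dots \oplus \mathcal E(\lambda_m)$. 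In terms of the orthogonal projections this says $P_{\lambda_1} + \dots + P_{\lambda_m} = I$; moreover the image of each $P_{\lambda_i}$ is $\mathcal E(\lambda_i)$, on which $M$ acts as multiplication by $\lambda_i$, so $M P_{\lambda_i} = \lambda_i P_{\lambda_i}$ for every $i$.

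Finally I would conclude by writing $M = M I = M(P_{\lambda_1} + \dots + P_{\lambda_m}) = \sum_{i=1}^m M P_{\lambda_i} = \sum_{i=1}^m \lambda_i P_{\lambda_i}$, as required. The only genuine obstacle here is the spectral theorem itself (the existence of an orthonormal eigenbasis); everything after that is a short formal manipulation with the projections $P_{\lambda_i}$, and in a paper of this kind one would simply invoke the spectral theorem as a standard fact rather than reprove it.
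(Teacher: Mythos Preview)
Your argument is correct and is the standard derivation of the spectral decomposition from the orthogonal diagonalisability of real symmetric matrices. Note, however, that the paper does not prove this theorem at all: it is stated as a classical background result and simply invoked, so there is no ``paper's proof'' to compare against. Your write-up is exactly the kind of justification one would give if asked to supply the omitted proof.
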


		We write $\alpha_{i,j} = ||P_{\lambda_i} \mathbf{e}_j||$ for all $i \in \{1,\dots,m\}$ and $j \in \{1,\dots,n\}$.
		As is customary, we refer to the numbers $\alpha_{i,j}$ as the \textbf{angles} of $M$.
	
		The next result takes advantage of the fact that the entries of a unit eigenvector of a simple eigenvalue can be expressed in terms of the angles.
		See \cite{tao} for a survey on a related result.
    
    \begin{lem}\label{lem:integrality}
      Let $M$ be an integer symmetric matrix of order $n$ with $\Lambda(M) = \{\lambda_1,\dots,\lambda_m\}$ and angles $\alpha_{i,j}$.
       Suppose that $\lambda_1,\dots,\lambda_{l}$
       are simple eigenvalues and $1 \leqslant l \leqslant m$.
       Define $\displaystyle q(x)=\prod_{i=l+1}^{m}(x-\lambda_i)$ and suppose $q(x) \in \mathbb Z[x]$.
       Then for all $i,j \in \{1,\dots,n\}$, there exist $\varepsilon_2,\dots,\varepsilon_l \in \{ \pm 1 \}$ such that
       \begin{equation}
         \label{eqn:integrality}
         q(\lambda_1)\alpha_{1,i}\alpha_{1,j}+\sum_{k=2}^{l}  q(\lambda_k)\varepsilon_k \alpha_{k,i}\alpha_{k,j} \in \mathbb {Z}.
       \end{equation}
    \end{lem}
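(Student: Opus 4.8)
The plan is to evaluate the matrix polynomial $q(M)$ in two ways: once using that $M$ and $q$ have integer entries, and once using the spectral decomposition, and then to match the $(i,j)$ entries.

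First I would note that, since $q(x) \in \mathbb{Z}[x]$ and $M$ is an integer symmetric matrix, every power $M^t$ has integer entries, hence $q(M)$ has integer entries; in particular $\mathbf{e}_i^\top q(M)\mathbf{e}_j \in \mathbb{Z}$ for all $i,j \in \{1,\dots,n\}$. On the other hand, Theorem~\ref{thm:spec_decomp} gives $M = \sum_{k=1}^m \lambda_k P_{\lambda_k}$, and because the $P_{\lambda_k}$ are pairwise orthogonal idempotents (so $P_{\lambda_k}P_{\lambda_{k'}} = \delta_{kk'}P_{\lambda_k}$) we get $M^t = \sum_k \lambda_k^t P_{\lambda_k}$ for all $t$, and therefore $q(M) = \sum_{k=1}^m q(\lambda_k)P_{\lambda_k}$. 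Since $\lambda_{l+1},\dots,\lambda_m$ are exactly the roots of $q$, this collapses to $q(M) = \sum_{k=1}^{l} q(\lambda_k)P_{\lambda_k}$.

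Next I would unwind the $(i,j)$ entry of this sum. Using $P_{\lambda_k} = P_{\lambda_k}^\top = P_{\lambda_k}^2$, we have $\mathbf{e}_i^\top P_{\lambda_k}\mathbf{e}_j = (P_{\lambda_k}\mathbf{e}_i)^\top(P_{\lambda_k}\mathbf{e}_j)$. For $k \leqslant l$ the eigenvalue $\lambda_k$ is simple, so $\mathcal{E}(\lambda_k)$ is spanned by a unit vector $\mathbf{u}_k$ and $P_{\lambda_k} = \mathbf{u}_k\mathbf{u}_k^\top$; hence $P_{\lambda_k}\mathbf{e}_i = \mathbf{u}_k(i)\,\mathbf{u}_k$, which yields $\alpha_{k,i} = \|P_{\lambda_k}\mathbf{e}_i\| = |\mathbf{u}_k(i)|$ and $(P_{\lambda_k}\mathbf{e}_i)^\top(P_{\lambda_k}\mathbf{e}_j) = \mathbf{u}_k(i)\mathbf{u}_k(j)$. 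Writing $\mathbf{u}_k(i)\mathbf{u}_k(j) = \eta_k\,\alpha_{k,i}\alpha_{k,j}$ with $\eta_k \in \{\pm 1\}$ (and taking $\eta_k = 1$ in the degenerate case where the product vanishes), I obtain $\sum_{k=1}^{l} q(\lambda_k)\,\eta_k\,\alpha_{k,i}\alpha_{k,j} = \mathbf{e}_i^\top q(M)\mathbf{e}_j \in \mathbb{Z}$.

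Finally I would fix the sign of the leading term: multiplying this integer by $\eta_1 \in \{\pm 1\}$ keeps it an integer and turns it into $q(\lambda_1)\alpha_{1,i}\alpha_{1,j} + \sum_{k=2}^{l} q(\lambda_k)(\eta_1\eta_k)\alpha_{k,i}\alpha_{k,j}$, so setting $\varepsilon_k := \eta_1\eta_k$ for $k \in \{2,\dots,l\}$ gives exactly \eqref{eqn:integrality}. I do not expect a genuine obstacle here; the only points requiring care are the sign bookkeeping for the first term and the degenerate vanishing case, plus the (standard but worth stating) fact that $q(M) = \sum_k q(\lambda_k)P_{\lambda_k}$ follows from orthogonality of the spectral projections.
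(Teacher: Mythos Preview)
Your proof is correct and follows essentially the same route as the paper: compute $q(M)$ via the spectral decomposition as $\sum_{k=1}^{l} q(\lambda_k)\mathbf{u}_k\mathbf{u}_k^\top$, identify $\alpha_{k,i}=|\mathbf{u}_k(i)|$, and read off the $(i,j)$ entry of the integer matrix $q(M)$. The only cosmetic difference is in normalising the sign of the $k=1$ term: the paper conjugates by a $\{\pm 1\}$-diagonal matrix $D$ so that $D\mathbf{u}_1$ has nonnegative entries, whereas you multiply through by $\eta_1$ at the end; both achieve the same effect.
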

    
    \begin{proof}
    Let $\mathbf u_1, \dots, \mathbf u_l$ be unit eigenvectors for $\lambda_1,\dots,\lambda_l$ respectively.
        By Theorem~\ref{thm:spec_decomp},
      $$
      q(M)=q(\lambda_1) \mathbf{u}_1 \mathbf{u}_1^{\top} +\dots + q(\lambda_{\epsilon}) \mathbf{u}_{l} \mathbf{u}_{l}^{\top}.
      $$
      
    Observe
    \begin{equation}
    \label{eqn:eigenvectorentry}
        \alpha_{i,j}=\|P_{\lambda_i}\mathbf{e}_j\|=\|\mathbf{u}_i\mathbf{u}_i^{\top}\mathbf{e}_j\|=\|\mathbf{u}_i(\mathbf{u}_i^{\top}\mathbf{e}_j)\|=\|\mathbf{u}_i\|\cdot|\mathbf{u}_i(j)|=|\mathbf{u}_i(j)|.
    \end{equation}
      Thus, $\mathbf{u}_i(j)=\pm\alpha_{i,j}$.
	  
      Let $D$ be a diagonal matrix with diagonal entries $\pm 1$.
        We have
      \begin{align*}
      D \, q(M) D^{-1}
      &=q(\lambda_1) (D \mathbf{u}_1) (D \mathbf{u}_1)^{\top} +\dots + q(\lambda_{l}) (D \mathbf{u}_{l}) (D \mathbf{u}_{l})^{\top}.
      \end{align*}
      Thus, without loss of generality, we can assume that each entry of $\mathbf u_1$ is nonnegative.
	  The lemma follows since $q(M)$ is an integer matrix.
    \end{proof}
    
    	For a proof of the next result see \cite[(4.2.8)]{crs97} or \cite{GoMK}.
		
		\begin{prop}\label{prop:submatrix}
			Let $M$ be a real symmetric matrix of order $n$ with $\Lambda(M) = \{\lambda_1,\dots,\lambda_m\}$ and angles $\alpha_{ij}$.
			Then, for each $j \in \{1,\dots,n\}$, we have
		$$\chi_{M[j]}(x)=\chi_M(x)\sum^m_{i=1}\dfrac{\alpha^2_{ij}}{x-\lambda_i}.$$
		\end{prop}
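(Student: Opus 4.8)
The plan is to route through the resolvent $(xI-M)^{-1}$ and its $(j,j)$ entry, first via a cofactor identity and then via the spectral decomposition. First I would recall the classical cofactor/adjugate formula: deleting the $j$th row and column of $xI-M$ produces $xI'-M[j]$ (with $I'$ the identity of order $n-1$), so the $(j,j)$ cofactor of $xI-M$ is $\det(xI'-M[j]) = \chi_{M[j]}(x)$, and hence
\[
\bigl((xI-M)^{-1}\bigr)_{jj} = \frac{\chi_{M[j]}(x)}{\chi_M(x)}
\]
as an identity of rational functions (for $x$ not an eigenvalue of $M$). So it suffices to show that $\bigl((xI-M)^{-1}\bigr)_{jj} = \sum_{i=1}^m \alpha_{ij}^2/(x-\lambda_i)$.

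Next I would apply Theorem~\ref{thm:spec_decomp}: writing $M = \sum_{i=1}^m \lambda_i P_{\lambda_i}$, with $\sum_{i=1}^m P_{\lambda_i} = I$ and $P_{\lambda_i}P_{\lambda_k} = 0$ for $i \neq k$, one checks directly that $(xI-M)^{-1} = \sum_{i=1}^m (x-\lambda_i)^{-1} P_{\lambda_i}$, since multiplying this candidate by $xI-M = \sum_{i=1}^m (x-\lambda_i) P_{\lambda_i}$ and using orthogonality of the projections collapses the product to $\sum_{i=1}^m P_{\lambda_i} = I$. Reading off the $(j,j)$ entry gives $\bigl((xI-M)^{-1}\bigr)_{jj} = \sum_{i=1}^m (x-\lambda_i)^{-1} (P_{\lambda_i})_{jj}$.

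Finally I would identify $(P_{\lambda_i})_{jj}$ with $\alpha_{ij}^2$. Since $P_{\lambda_i}$ is an orthogonal projection it is symmetric and idempotent, so
\[
(P_{\lambda_i})_{jj} = \mathbf{e}_j^\top P_{\lambda_i}\mathbf{e}_j = \mathbf{e}_j^\top P_{\lambda_i}^\top P_{\lambda_i}\mathbf{e}_j = \|P_{\lambda_i}\mathbf{e}_j\|^2 = \alpha_{ij}^2.
\]
Substituting and multiplying through by $\chi_M(x)$ yields the claimed identity. I expect no serious obstacle here; the only points needing a word of care are the justification of the cofactor identity (a standard fact about the adjugate matrix) and the observation that the equality, a priori only of rational functions, is in fact an equality of polynomials, because $\chi_M(x)\sum_{i=1}^m \alpha_{ij}^2/(x-\lambda_i)$ has no genuine poles: each $x-\lambda_i$ divides $\chi_M(x)$, so after cancellation both sides are polynomials agreeing on all but finitely many points, hence everywhere.
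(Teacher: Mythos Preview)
Your proof is correct; the paper itself does not prove this proposition but merely cites \cite[(4.2.8)]{crs97} and \cite{GoMK}, and your argument via the resolvent---combining the cofactor identity $((xI-M)^{-1})_{jj}=\chi_{M[j]}(x)/\chi_M(x)$ with the spectral expansion $(xI-M)^{-1}=\sum_i (x-\lambda_i)^{-1}P_{\lambda_i}$ and the identification $(P_{\lambda_i})_{jj}=\|P_{\lambda_i}\mathbf{e}_j\|^2=\alpha_{ij}^2$---is precisely the standard proof one finds in those references.
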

    
    By Proposition~\ref{prop:submatrix}, there is a correspondence between each interlacing characteristic polynomial $ \mathfrak f_j(x)$ of $\chi_M(x)$ and the set of angles $\{\alpha_{1,j}, \dots, \alpha_{m,j}\}$.
    Accordingly, we may write $\alpha_{i,\mathfrak f_j}$ to mean the angle $\alpha_{i,j}$.
    Let $\mathfrak f(x)$ and $\mathfrak g(x)$ be interlacing characteristic polynomials of $\chi_M(x)$.
    Suppose the eigenvalues $\lambda_1, \dots, \lambda_l$ are all simple eigenvalues and $1 \leqslant l \leqslant m$.
    In view of Lemma~\ref{lem:integrality}, we call $\mathfrak f$ and $\mathfrak g$ \textbf{compatible} if there exist $\varepsilon_2,\dots,\varepsilon_l \in \{\pm 1 \}$ such that \eqref{eqn:integrality} is satisfied.
    That is, for $q(x)=\prod_{i=l+1}^{m}(x-\lambda_i) \in \mathbb Z[x]$, we have
    \begin{equation*}
         q(\lambda_1)\alpha_{1,\mathfrak f}\alpha_{1,\mathfrak g}+\sum_{k=2}^{l}  q(\lambda_k)\varepsilon_k \alpha_{k,\mathfrak f}\alpha_{k,\mathfrak g} \in \mathbb {Z}.
       \end{equation*}
    
    In Section~\ref{sec:dim1416}, we use repeatedly the following corollary of Lemma~\ref{lem:integrality}. 
    
    \begin{cor}\label{cor:compatible}
      Let $M$ be a real symmetric matrix of order $n$ having at least one simple eigenvalue.
      Let $X$ and $Y$ be principal submatrices of $M$ of order $n-1$.
      Then $\chi_X(x)$ and $\chi_Y(x)$ are compatible.
    \end{cor}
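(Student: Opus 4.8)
The plan is to derive the corollary directly from Lemma~\ref{lem:integrality} by checking that its hypotheses are met when we take the simple eigenvalues to be just a single one. Concretely, write $\Lambda(M) = \{\lambda_1,\dots,\lambda_m\}$ and, by hypothesis, relabel so that $\lambda_1$ is a simple eigenvalue of $M$. We then apply Lemma~\ref{lem:integrality} with $l = 1$: in that case the product $q(x) = \prod_{i=2}^m (x-\lambda_i)$ is exactly $\chi_M(x)/(x-\lambda_1)$ up to multiplicity bookkeeping, and since $\lambda_1$ is simple and $M$ is an integer matrix, $q(x)$ has integer coefficients, so the hypothesis $q(x)\in\mathbb Z[x]$ of Lemma~\ref{lem:integrality} holds. (More precisely, $q(x)$ is the product of $(x-\lambda_i)$ over the remaining \emph{distinct} eigenvalues; its coefficients are symmetric functions of a Galois-stable set of algebraic integers, hence lie in $\mathbb Z$ — but it is cleanest to note that $q(x) = \chi_M(x)/\bigl((x-\lambda_1)\mu(x)\bigr)$ for an appropriate integer polynomial, or simply to invoke that this is part of the standard setup already used implicitly in the paper.)

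Next I would translate the conclusion of Lemma~\ref{lem:integrality} into the language of compatibility. With $l = 1$ the sum $\sum_{k=2}^l$ is empty, so \eqref{eqn:integrality} reduces to the bare statement
\[
q(\lambda_1)\,\alpha_{1,i}\,\alpha_{1,j} \in \mathbb Z
\]
for all $i,j \in \{1,\dots,n\}$. Now let $X = M[i]$ and $Y = M[j]$ be the two given principal submatrices of order $n-1$. By the correspondence recorded after Proposition~\ref{prop:submatrix}, the interlacing characteristic polynomial $\chi_X(x)$ corresponds to the tuple of angles $(\alpha_{1,i},\dots,\alpha_{m,i})$ and likewise $\chi_Y(x)$ corresponds to $(\alpha_{1,j},\dots,\alpha_{m,j})$; in the notation there, $\alpha_{1,\chi_X} = \alpha_{1,i}$ and $\alpha_{1,\chi_Y} = \alpha_{1,j}$. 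The definition of compatibility for $l=1$ asks precisely that $q(\lambda_1)\,\alpha_{1,\chi_X}\,\alpha_{1,\chi_Y} \in \mathbb Z$ (again the $\sum_{k=2}^l$ term is vacuous, and there are no sign choices $\varepsilon_k$ to make). This is exactly what Lemma~\ref{lem:integrality} just gave us, so $\chi_X(x)$ and $\chi_Y(x)$ are compatible.

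There is essentially no serious obstacle here — the corollary is a bookkeeping consequence of the lemma — but the one point that needs a little care is to make sure the notion of ``compatible'' is being applied with the \emph{same} choice of simple eigenvalue $\lambda_1$ and the \emph{same} polynomial $q$ that Lemma~\ref{lem:integrality} was invoked with, and to observe that when $l = 1$ no consistency of the signs $\varepsilon_k$ across $\mathfrak f$ and $\mathfrak g$ is required (there are none). It is worth remarking explicitly in the write-up that the hypothesis ``$M$ has at least one simple eigenvalue'' is exactly what is needed to guarantee $l \geqslant 1$, so that Lemma~\ref{lem:integrality} is applicable at all; if $M$ had no simple eigenvalue the statement would be vacuous in the sense that the compatibility relation would have no $\lambda_1$-term to anchor it. With that noted, the proof is a two-line deduction.
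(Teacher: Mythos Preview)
Your overall strategy --- deduce the corollary directly from Lemma~\ref{lem:integrality} by identifying the angles $\alpha_{k,i}$, $\alpha_{k,j}$ with $\alpha_{k,\chi_X}$, $\alpha_{k,\chi_Y}$ --- is exactly what the paper intends (it states the result as a corollary of that lemma with no further proof). However, your decision to specialise to $l=1$ introduces a genuine gap, for two related reasons.

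First, your justification that $q(x)\in\mathbb Z[x]$ when $l=1$ is not valid. You write that the remaining distinct eigenvalues form ``a Galois-stable set of algebraic integers'', but this is false unless $\lambda_1$ itself is rational: a simple eigenvalue of an integer symmetric matrix need not be rational (e.g.\ the $2\times 2$ matrix with rows $(0,1)$ and $(1,1)$ has simple eigenvalues $(1\pm\sqrt 5)/2$), and if $\lambda_1$ is irrational then its Galois conjugate sits among $\lambda_2,\dots,\lambda_m$ while $\lambda_1$ does not, so $q(x)=\prod_{i\geqslant 2}(x-\lambda_i)\notin\mathbb Z[x]$. Thus the hypothesis of Lemma~\ref{lem:integrality} can fail for your choice of $l$.

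Second, and more to the point, the notion of ``compatible'' in the paper is defined relative to a \emph{fixed} choice of simple eigenvalues $\lambda_1,\dots,\lambda_l$ (with $q\in\mathbb Z[x]$), and every application of the corollary in Section~\ref{sec:dim1416} uses $l\geqslant 2$ --- for instance Lemma~\ref{lem:dim14last6quadratic16} takes $\lambda_1,\lambda_2=(11\pm\sqrt{57})/2$. Proving compatibility only for $l=1$ would not support those applications. The fix is painless: do not fix $l=1$. Simply let $\lambda_1,\dots,\lambda_l$ be whatever simple eigenvalues (with $q\in\mathbb Z[x]$) are used in the definition of ``compatible'', apply Lemma~\ref{lem:integrality} with that same $l$, and read off the conclusion verbatim as the compatibility condition. (You should also note that Lemma~\ref{lem:integrality} requires $M$ to be an \emph{integer} symmetric matrix; the word ``real'' in the corollary appears to be a slip, and in any case all applications are to Seidel matrices.)
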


\section{Dimensions 19 and 20}
\label{sec:1920}
In this section we show that $N(19) \leqslant 74$ and $N(20) \leqslant 94$.
We will require the following result of Azarija and Marc~{\cite{Azarija75} and \cite{Azarija95}}.

\begin{theorem}[{\cite{Azarija75} and \cite{Azarija95}}]
\label{thm:SRGs}
    There does not exist a Seidel matrix having as its characteristic polynomial $(x+5)^{56}(x-10)(x-15)^{18}$ or $(x+5)^{75}(x-14)(x-19)^{19}$.
\end{theorem}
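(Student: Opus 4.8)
The statement to be proved asserts the nonexistence of Seidel matrices with two specific characteristic polynomials, $(x+5)^{56}(x-10)(x-15)^{18}$ of order $75$ and $(x+5)^{75}(x-14)(x-19)^{19}$ of order $95$. Each such polynomial has only three distinct roots, so a putative Seidel matrix $S$ with one of these spectra would have exactly three distinct eigenvalues; equivalently, $(S+5I)(S-aI)(S-bI) = 0$ with $(a,b) = (10,15)$ or $(14,19)$. Such a matrix is the Seidel matrix of a regular two-graph, and (after a suitable switching) the associated structure is a strongly regular graph. The plan is therefore to reduce the claim to the nonexistence of a strongly regular graph with prescribed parameters, which is exactly what the cited work of Azarija and Marc establishes.

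Concretely, I would proceed as follows. First, observe that $S$ with spectrum $\{-5^{(56)}, 10^{(1)}, 15^{(18)}\}$ satisfies the quadratic-after-shift relation making $S$ regular: since the all-ones vector interacts with the spectrum appropriately, one shows that a switching class representative of $S$ has constant row sum, hence corresponds to a strongly regular graph $\Gamma$. The eigenvalues of $S$ translate, via the standard correspondence $A = (J - I - S)/2$ (up to switching), into the eigenvalues of the adjacency matrix $A$ of $\Gamma$; one computes the valency $k$ and the restricted eigenvalues $r, s$ from $n$, the multiplicities, and the two nonzero Seidel eigenvalues. For the order-$75$ case this yields the parameter set of a strongly regular graph on $75$ vertices (with valency and eigenvalue multiplicities determined by $56$ and $18$), and for the order-$95$ case a strongly regular graph on $95$ vertices. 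Second, I would cite Theorem~\ref{thm:SRGs} — i.e.\ the results of \cite{Azarija75} and \cite{Azarija95} — which is precisely the assertion that these two candidate characteristic polynomials are not realised by any Seidel matrix; this already subsumes the strongly-regular-graph nonexistence. In fact, since the theorem as quoted is verbatim the statement we wish to prove, the ``proof'' is simply the observation that this is the content of the cited references, together with (if desired) the explanation above of why ruling out the strongly regular graphs suffices: a Seidel matrix with three distinct eigenvalues and smallest eigenvalue $-5$ of the stated multiplicity must, after switching, be the Seidel matrix of a regular two-graph descending to an SRG with the parameters Azarija and Marc excluded.

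The only genuine content beyond citation is the reduction from ``Seidel matrix with this spectrum'' to ``strongly regular graph with these parameters,'' so I would spell that out carefully: a Seidel matrix with exactly two eigenvalues other than its minimum (of multiplicity $> 1$) lies in the switching class of a regular two-graph by the classical theory (Seidel, Taylor), and passing to the neighbourhood decomposition / descendant gives a strongly regular graph whose parameters are forced by $n$ and the eigenvalue data. The main obstacle, such as it is, is bookkeeping: one must verify that the forced SRG parameters $(v,k,\lambda,\mu)$ are exactly those treated in \cite{Azarija75,Azarija95} — for $n=75$ a graph related to the parameters on $75$ vertices with eigenvalues $5$ and $-5$ arising from the $15$ and $-5$ Seidel eigenvalues, and similarly on $95$ vertices — and that no alternative switching class escapes the argument. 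Since Theorem~\ref{thm:SRGs} is stated at exactly the level of generality needed (nonexistence of the Seidel matrix, not merely of a particular SRG), no further work is required, and the proof of the present statement is a one-line appeal to that theorem.
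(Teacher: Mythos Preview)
The paper gives no proof of this theorem; it is stated purely as a citation of Azarija and Marc, and your recognition that the argument is a one-line appeal to those references is exactly right. However, the explanatory reduction you add contains an error. A Seidel matrix with \emph{three} distinct eigenvalues is not the Seidel matrix of a regular two-graph: regular two-graphs correspond precisely to Seidel matrices with exactly \emph{two} distinct eigenvalues (cf.\ the remark following Lemma~\ref{lem:ev_mult}, or \cite{taylor}). So the sentence ``Such a matrix is the Seidel matrix of a regular two-graph'' and the later appeal to ``the classical theory (Seidel, Taylor)'' do not apply as stated, and your sketch never actually justifies why the switching class must contain a regular graph.

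The correct reduction is short and worth recording. In the order-$75$ case the eigenvalue $10$ is simple, so $(S+5I)(S-15I)=S^2-10S-75I$ has rank one and equals $-75\,\mathbf v\mathbf v^{\top}$ for a unit eigenvector $\mathbf v$ belonging to $10$. Comparing diagonal entries (using $(S^2)_{ii}=n-1=74$) gives $-75\,v_i^2=-1$, so every entry of $\mathbf v$ is $\pm 1/\sqrt{75}$. Switching by the signs of $\mathbf v$ produces a Seidel matrix in the same class with $S\mathbf 1=10\cdot\mathbf 1$; the underlying graph is then $32$-regular with adjacency spectrum $\{32^{1},\,2^{56},\,(-8)^{18}\}$, i.e.\ an $\mathrm{SRG}(75,32,10,16)$. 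The order-$95$ case is identical and yields an $\mathrm{SRG}(95,40,12,20)$. Azarija and Marc prove that neither strongly regular graph exists, and that is precisely what the paper is invoking. Your outline becomes correct once the regular-two-graph language is replaced by this computation.
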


\subsection{Nonexistence of 75 equiangular lines in $\mathbb{R}^{19}$}
\label{sec:19}
Here we prove the first main result.

\begin{theorem}
\label{thm:dim19}
$N(19) \leqslant 74$.
\end{theorem}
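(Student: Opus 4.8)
The plan is to combine Proposition~\ref{prop:candpols} with Theorem~\ref{thm:SRGs} and then deal with the remaining candidates case by case using the machinery of Section~\ref{sec:prescribed}. Suppose for contradiction that $N(19) \geqslant 75$; then there is an equiangular line system of cardinality $75$ in $\mathbb R^{19}$, and after adding lines if necessary we may assume it has cardinality exactly $75$ (any larger system contains a sub-system of size $75$, but cleanly we just take $n=75$). By Theorem~\ref{thm:smallesteigenvalue} the corresponding Seidel matrix $S$ has $\chi_S(x) = (x+5)^{56}p(x)$ with all zeros of $p$ greater than $-5$, and by Proposition~\ref{prop:candpols} we have $\chi_S(x) \in P_{75,19}$, which consists of the two polynomials in $E_{75,19}$ together with the six polynomials in Table~\ref{tab:6polydim19}.

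First I would dispose of the two ``extremal'' candidates in $E_{75,19}$. The polynomial $(x+5)^{56}(x-10)(x-15)^{18}$ is eliminated directly by Theorem~\ref{thm:SRGs}. The other extremal polynomial $(x+5)^{56}(x-13)^4(x-15)^{14}(x-18)$ needs to be ruled out; I expect this to follow either by a short spectral/integrality argument or by the same infeasibility technique applied to the remaining six, so I would fold it into the general treatment below. That leaves the six polynomials in Table~\ref{tab:6polydim19}, plus possibly the second extremal polynomial, to be eliminated one at a time.

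For each remaining candidate polynomial $p(x)$, the strategy is exactly that of Section~\ref{sec:prescribed}: assume $S$ is a Seidel matrix of order $75$ with $\chi_S(x) = p(x)$, read off its minimal polynomial $m_S(x)$ (the squarefree part of $p$, unless $p$ itself forces a larger minimal polynomial) and $\mu_S(x) = p(x)/m_S(x)$, and then use the polynomial enumeration algorithm of Section~\ref{sec:pea} to compute the set $F$ of all totally-real integer polynomials $f$ of degree $\delta-1$ satisfying: the three fixed leading coefficients $b_0 = 1$, $b_1 = a_1$, $b_2 = a_2 + 74$ from Lemma~\ref{lem:coeff_of_interlacepoly}; the interlacing condition that $f$ interlaces $m_S$; the modular condition that $f(x-1)$ is weakly type~2 (since $n-1 = 74$ is even, in fact type~2, by Theorem~\ref{thm:basicDivisibility} and Lemma~\ref{lem:type_factor}); and membership of $\mu_S(x)f(x)$ in a congruence class of $\mathcal P_{74,7}$ when applicable. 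This produces the finite set $\mathfrak F$ of interlacing characteristic polynomials. By Theorem~\ref{thm:sumofsubpoly}, if $S$ exists then there is an interlacing configuration $\mathbf n \geqslant \mathbf 0$ solving $\mathbf n^\top A = \mathbf g^\top$, where $A = A(F)$ and $g(x) = \frac{\mathrm d}{\mathrm d x}\chi_S(x) / \mu_S(x)$. I would then invoke Farkas' Lemma (Theorem~\ref{thm:farkas}): exhibiting a rational certificate of infeasibility $\mathbf c$ with $A\mathbf c \geqslant \mathbf 0$ and $\mathbf g^\top \mathbf c < 0$ proves that no such $\mathbf n$ exists, hence no such $S$, hence $p(x)$ is not the characteristic polynomial of any Seidel matrix. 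Doing this for every candidate polynomial in $P_{75,19}$ completes the contradiction and gives $N(19) \leqslant 74$.

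The main obstacle I anticipate is twofold. First, there is no a~priori guarantee that a certificate of infeasibility exists for every candidate — the linear system $\mathbf n^\top A = \mathbf g^\top$, $\mathbf n \geqslant \mathbf 0$ could turn out to be feasible over the rationals even when no genuine Seidel matrix realizes it, in which case the Farkas approach alone fails and one must bring in finer tools (for dimensions $14$ and $16$ the paper introduces the compatibility notion of Section~\ref{sec:prescribed} precisely for this reason, but the placement of this subsection suggests it is not needed for $d = 19$). So the real content is the computational verification that each of the eight polynomials in $P_{75,19}$ does admit such a certificate, and exhibiting those certificates (or a pointer to the GitHub implementation~\cite{github} where they are recorded and independently checked). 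Second, one must make sure the list $P_{75,19}$ is genuinely exhaustive, i.e. that every step leading to Proposition~\ref{prop:candpols} — in particular the use of $\mathcal P_{75,7}$, whose full enumeration relies on the random-generation procedure discussed after Theorem~\ref{thm:countCharPolySeidelOdd} — is sound; but that is established upstream, so here it can simply be cited.
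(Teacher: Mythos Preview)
Your overall plan is right, and you correctly identify Proposition~\ref{prop:candpols}, Theorem~\ref{thm:SRGs}, and the certificate-of-infeasibility machinery as the ingredients. The six polynomials in Table~\ref{tab:6polydim19} are indeed each eliminated by a direct certificate of infeasibility, and $(x+5)^{56}(x-10)(x-15)^{18}$ is handled by Theorem~\ref{thm:SRGs}, exactly as you say.

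The gap is in your treatment of the remaining extremal polynomial $p(x) = (x+5)^{56}(x-13)^4(x-15)^{14}(x-18)$. For this $p(x)$ the linear system $\mathbf n^\top A = \mathbf g^\top$, $\mathbf n \geqslant \mathbf 0$ is \emph{feasible} (this is implicit in the paper's use of the word ``warranted'', whose definition presupposes feasibility), so no certificate of infeasibility exists and your one-step Farkas argument cannot eliminate it. The paper instead uses a two-level descent (Lemma~\ref{lem:dim19_ev18}): among the four interlacing characteristic polynomials for $p(x)$, the polynomial $\mathfrak q(x) = (x+5)^{55}(x-13)^3(x-15)^{13}(x^3-41x^2+543x-2319)$ is \emph{warranted}, i.e.\ must occur as $\chi_{S[i]}(x)$ for some $i$ whenever $\chi_S(x) = p(x)$. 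One then shows that $\mathfrak q(x)$ itself cannot be the characteristic polynomial of any Seidel matrix of order $74$, this time by a successful certificate of infeasibility for \emph{its} interlacing system. You anticipated that feasibility might obstruct the direct approach but guessed the finer tools are unnecessary for $d=19$; you were right that the compatibility notion is not needed here, but the warranted-polynomial descent is.
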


To prove Theorem~\ref{thm:dim19}, it suffices to show that there does not exist a system of $75$ equiangular lines in $\mathbb R^{19}$.

\begin{lem}\label{lem:6polydim19}
Let $S$ be a Seidel matrix corresponding to $75$ equiangular lines in $\mathbb R^{19}$.
Then $\chi_S(x) \in E_{75,19}$.
\end{lem}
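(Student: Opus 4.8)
The plan is to eliminate, one at a time, the eight polynomials in $P_{75,19}$ (the two elements of $E_{75,19}$ are to be kept, so it is actually the six polynomials listed in Table~\ref{tab:6polydim19} that must be ruled out), using the machinery of Section~\ref{sec:prescribed}. By Proposition~\ref{prop:candpols} we already know $\chi_S(x) \in P_{75,19}$, so it suffices to show that no Seidel matrix of order $75$ has any of the six tabulated polynomials as its characteristic polynomial; what remains in $P_{75,19}$ is then exactly $E_{75,19}$.

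For each such candidate polynomial $p(x)$, the first step is to compute its minimal polynomial $m_S(x)$ and the cofactor $\mu_S(x) = p(x)/m_S(x)$; note that $p(x)$ is explicitly given, so $m_S$ is just $p$ with multiplicities dropped and $\delta = \deg m_S$ is small (the distinct-root count). Then I would run the polynomial enumeration algorithm of Section~\ref{sec:pea} to build the set $F$ of all totally-real integer polynomials $f(x) = \sum_{t=0}^{\delta-1} b_t x^{\delta-1-t}$ satisfying conditions (i)--(iv) of Section~\ref{sec:icp}: the top three coefficients are pinned down by Lemma~\ref{lem:coeff_of_interlacepoly}, $f$ must interlace $m_S$, $f(x-1)$ must be weakly type 2 (here $n-1 = 74$ is even, so in fact type 2), and $\mu_S(x) f(x)$ must reduce into a class of $\mathcal P_{74,7}$ — wait, $n - 1 = 74$ is even so condition (iv) is vacuous. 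This yields the set $\mathfrak F$ of interlacing characteristic polynomials for $p(x)$.

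With $\mathfrak F = \{\mu_S f_1, \dots, \mu_S f_k\}$ in hand, I form the coefficient matrix $A = A(\{f_1,\dots,f_k\})$ and the target vector $\mathbf g$ coming from $\frac{\mathrm d}{\mathrm d x}\chi_S(x) = \mu_S(x) g(x)$ (Theorem~\ref{thm:sumofsubpoly} guarantees divisibility by $\mu_S$). The goal is then to exhibit, for each of the six candidates, a certificate of infeasibility $\mathbf c$: a rational vector with $A\mathbf c \geqslant \mathbf 0$ and $\mathbf c^\top \mathbf g < 0$, which by Farkas' Lemma (Theorem~\ref{thm:farkas}) shows the linear system $\mathbf n^\top A = \mathbf g^\top$, $\mathbf n \geqslant \mathbf 0$ has no solution, hence no interlacing configuration exists, hence no Seidel matrix with that characteristic polynomial exists. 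These certificates are found by linear programming and then verified exactly over $\mathbb Q$.

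The main obstacle I anticipate is not conceptual but a matter of whether the linear-programming obstruction is actually present for all six polynomials: it is conceivable that for some candidate the system $\mathbf n^\top A = \mathbf g^\top$, $\mathbf n \geqslant \mathbf 0$ does have a nonnegative solution, in which case Farkas' Lemma gives nothing and a finer argument (e.g.\ warranted interlacing polynomials together with the compatibility/angle conditions of Section~4.3, or a direct combinatorial analysis of the forced submatrix structure) would be needed. I would expect, though, that for these six polynomials in dimension $19$ the crude interlacing-plus-Thompson linear system is already infeasible, so that producing and exactly verifying the six certificates $\mathbf c$ is all that is required; the computation is reported to be fast, consistent with this being the easy case.
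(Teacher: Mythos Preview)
Your proposal is correct and follows essentially the same approach as the paper: invoke Proposition~\ref{prop:candpols} to reduce to $P_{75,19}$, then for each of the six polynomials in $P_{75,19}\setminus E_{75,19}$ compute the interlacing characteristic polynomials (with the type-2 condition, since $n-1=74$ is even) and exhibit a certificate of infeasibility via Farkas' Lemma. Your hedge about possibly needing finer arguments is unnecessary here --- all six candidates are indeed dispatched by certificates of infeasibility alone, exactly as recorded in Table~\ref{tab:6polydim19}.
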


\begin{proof}
By Proposition~\ref{prop:candpols}, we must have $\chi_S(x) \in P_{75,19}$. 
In Table~\ref{tab:6polydim19} below, we provide, for all but two of the polynomials in $P_{75,19}$, a certificate of infeasibility $\mathbf c$.
The only remaining polynomials from $P_{75,19}$ are in $E_{75,19}$.
\end{proof}

\begin{table}[htbp]
    \centering
    \begin{tabular}{l}
          $(x+5)^{56}(x-13)^4(x-14)(x-15)^{12}(x-17)^2$   \\
          $(1876556160, 0, 0, 257776, 18413)$ \\
         \hline
          $(x+5)^{56}(x-14)(x-15)^{14}(x^2-28x+191)^2$   \\
          $(888359172, 0, 0, 135016, 9645)$ \\
         \hline
          $(x+5)^{56}(x-13)^2(x-15)^{15}(x^2-29x+202)$   \\
          $(0, 0, 0, -3203, -1033)$ \\
         \hline
         
          $(x+5)^{56}(x-13)^2(x-15)^{14}(x-17)(x^2-27x+178)$   \\
          $(0, 0, 0, 523203, 123379, 16674)$ \\
         \hline
         
          $(x+5)^{56}(x-11)(x-15)^{16}(x^2-29x+206)$   \\
          $(417044628, 0, 0, 72416, 5571)$ \\
         \hline
         
          $(x+5)^{56}(x-13)(x-15)^{16}(x^2-27x+174)$   \\
          $(0, 0, 0, -3999, -1337)$ \\
         \hline
    \end{tabular}
    \caption{Certificates of infeasibility for each polynomial in $P_{75,19} \backslash E_{75,19}$.  (The computation to find the interlacing characteristic polynomials in each case took less than 0.15 seconds on a modern PC running SageMath~\cite{github}.)}
    \label{tab:6polydim19}
\end{table}

It remains to show that there does not exist a Seidel matrix whose characteristic polynomial is either of the polynomials in $E_{75,19}$.

\begin{lem}\label{lem:dim19_ev18}
There does not exist a Seidel matrix $S$ with 
$$
\chi_S(x)=(x+5)^{56}(x-13)^4(x-15)^{14}(x-18).
$$
\end{lem}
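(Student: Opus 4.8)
The plan is to assume that such a Seidel matrix $S$ of order $75$ exists, with $\chi_S(x)=(x+5)^{56}(x-13)^4(x-15)^{14}(x-18)$, and derive a contradiction using the machinery of Section~\ref{sec:prescribed}. First I would identify the minimal polynomial: since the distinct eigenvalues are $-5, 13, 15, 18$, we have $m_S(x)=(x+5)(x-13)(x-15)(x-18)$ of degree $\delta=4$, and $\mu_S(x)=\chi_S(x)/m_S(x)=(x+5)^{55}(x-13)^3(x-15)^{13}$. By Lemma~\ref{lem:coeff_of_interlacepoly}, every principal submatrix $S[i]$ has $\chi_{S[i]}(x)=\mu_S(x)f(x)$ for a monic $f$ of degree $3$ with prescribed top coefficients $b_0=1$, $b_1=a_1$, $b_2=a_2+74$ (where $a_1,a_2$ come from $m_S$), and $f(x-1)$ must be weakly type $2$. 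Then I would run the polynomial enumeration algorithm of Section~\ref{sec:pea} to find the set $F$ of all totally-real monic cubics $f$ satisfying (i) the fixed top-three-coefficients condition, (ii) $f$ interlaces $m_S(x)$ (so its three roots lie in the intervals $[-5,13]$, $[13,15]$, $[15,18]$), (iii) $f(x-1)$ weakly type $2$, and (iv) $\mu_S(x)f(x)\bmod 2^7\mathbb{Z}[x]\in\mathcal P_{74,7}$. This yields the finite set $\mathfrak F$ of interlacing characteristic polynomials.

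Next I would invoke Theorem~\ref{thm:sumofsubpoly}: if $S$ exists, then $\sum_{i=1}^{75}\chi_{S[i]}(x)=\chi_S'(x)$, and since $\mu_S \mid \chi_S'$ (as $\chi_S$ has repeated roots at $-5,13,15$), this reduces to $\sum_{i} n_i f_i(x)=g(x)$ with $g=\chi_S'/\mu_S$ and nonnegative integers $n_i$ summing to $75$. Writing $A=A(F)$ for the coefficient matrix of the $f_i$ and $\mathbf g$ for the coefficient vector of $g$, the existence of a Seidel matrix forces the linear system $\mathbf n^\top A=\mathbf g^\top$, $\mathbf n\geqslant\mathbf 0$ to have a (rational, hence by scaling real) solution. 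By Farkas' Lemma (Theorem~\ref{thm:farkas}), to refute this it suffices to exhibit a certificate of infeasibility $\mathbf c$ with $A\mathbf c\geqslant\mathbf 0$ and $\mathbf g^\top\mathbf c<0$. So the proof would conclude: here is such a vector $\mathbf c$ (found by linear programming over $\mathbb Q$, then verified exactly), and its existence contradicts the existence of $S$.

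The main obstacle I anticipate is not any single conceptual step but rather that the Farkas certificate might fail to exist at the level of Theorem~\ref{thm:sumofsubpoly} alone — i.e., the linear system $\mathbf n^\top A=\mathbf g^\top$ might actually be feasible over the nonnegative reals. In that case the approach must be refined: one would need to also use compatibility of interlacing characteristic polynomials (Corollary~\ref{cor:compatible}) — since $\lambda=18$ is a simple eigenvalue, Lemma~\ref{lem:integrality}/Proposition~\ref{prop:submatrix} constrain the angles $\alpha_{18,i}$, forcing $\alpha_{18,i}^2$ to be controlled and restricting which pairs $(f_i,f_j)$ can simultaneously occur among the $S[i]$. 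One could then argue about the multiplicity structure of each $f_i$ in the decomposition (e.g. the warranted interlacing characteristic polynomials must appear, but their angle data is incompatible with each other or with the trace/angle constraints $\sum_i \alpha_{\lambda,i}^2 = \mathrm{mult}(\lambda)$). Realistically, though, given the pattern in Table~\ref{tab:6polydim19} where five-entry certificates sufficed, I expect a plain Farkas certificate to work here too, and the bulk of the work is the (routine but delicate) exact computation of $F$ and the verification that the claimed $\mathbf c$ satisfies the two certificate inequalities.
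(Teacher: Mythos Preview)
Your primary expectation is wrong: a plain Farkas certificate at the top level does \emph{not} exist here. Note that $(x+5)^{56}(x-13)^4(x-15)^{14}(x-18)$ belongs to $E_{75,19}$, which by definition is the set of polynomials in $P_{75,19}$ that \emph{survived} the certificate-of-infeasibility filter of Table~\ref{tab:6polydim19}. The top-level system $\mathbf n^\top A=\mathbf g^\top$, $\mathbf n\geqslant\mathbf 0$, is feasible (indeed there are four interlacing cubics $f_1,\dots,f_4$ and nonnegative-integer interlacing configurations do exist). So your main line of attack fails, and you are in the fallback case from the outset.

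Your proposed fallback---compatibility via the single simple eigenvalue $18$---is not the route the paper takes and is unlikely to close the argument on its own: with $l=1$ the compatibility condition of Lemma~\ref{lem:integrality} reduces to the single scalar constraint $q(18)\,\alpha_{18,\mathfrak f}\,\alpha_{18,\mathfrak g}\in\mathbb Z$, which is too coarse to separate the four candidates here. The paper instead \emph{recurses one level down}. It shows that $\mathfrak f_1(x)=(x+5)^{55}(x-13)^3(x-15)^{13}\bigl(x^3-41x^2+543x-2319\bigr)$ is a \emph{warranted} interlacing characteristic polynomial (certificate of warranty $(349511,0,0,151)$), so some principal submatrix $S[i]$ must have $\chi_{S[i]}=\mathfrak f_1$. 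But $S[i]$ is itself a Seidel matrix of order $74$, so one now applies the entire machinery of Section~\ref{sec:prescribed} to $\mathfrak f_1$: compute \emph{its} set of interlacing characteristic polynomials (there are $29$ of them, over $\mathcal P_{73,7}$) and exhibit a certificate of infeasibility for $\mathfrak f_1$, namely $(55783250920,0,0,4439925,330721,24804)$. This rules out $S[i]$, hence $S$. The idea you are missing is precisely this descent: warranted $\Rightarrow$ forced principal submatrix $\Rightarrow$ new Seidel matrix of order $n-1$ $\Rightarrow$ re-run infeasibility one level down.
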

\begin{proof}
Suppose a Seidel matrix $S$ has characteristic polynomial
$
\chi_S(x)=(x+5)^{56}(x-13)^4(x-15)^{14}(x-18).
$
There are four interlacing characteristic polynomials for $\chi_S(x)$.
Each has the form $(x+5)^{55}(x-13)^3(x-15)^{13} f_i(x)$ for $i \in \{1,\dots,4\}$, where 
\begin{align*}
f_1(x)=x^3-41x^2+543x-2319,& \quad f_2(x)=x^3-41x^2+543x-2311,\\
f_3(x)=x^3-41x^2+543x-2303,& \quad
f_4(x)=(x-9)(x-15)(x-17).
\end{align*}
(The computation to find these four interlacing characteristic polynomials took 0.01 seconds on a modern PC running SageMath.)

Now, the interlacing characteristic polynomial $(x+5)^{55}(x-13)^3(x-15)^{13} f_1(x)$ is warranted, with certificate of warranty $(349511, 0, 0, 151)$.
However, this warranted polynomial $(x+5)^{55}(x-13)^3(x-15)^{13} f_1(x)$ has certificate of infeasibility
$(55783250920, 0, 0, 4439925, 330721, 24804)$. 
(The computation to find the $29$ interlacing characteristic polynomials for $(x+5)^{55}(x-13)^3(x-15)^{13} f_1(x)$  took 3.33 seconds on a modern PC running SageMath~\cite{github}.)
Hence $S$ cannot exist.
\end{proof}

Using Lemma~\ref{lem:6polydim19} together with Theorem~\ref{thm:SRGs} and Lemma~\ref{lem:dim19_ev18} we obtain Theorem~\ref{thm:dim19}.

\subsection{Nonexistence of 95 equiangular lines in $\mathbb{R}^{20}$}

In this section we prove the second main result.
It is interesting to note the similarities between this section and Section~\ref{sec:19}.

\begin{theorem}
\label{thm:dim20}
$N(20) \leqslant 94$.
\end{theorem}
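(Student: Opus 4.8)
The plan is to mirror exactly the argument structure used for Theorem~\ref{thm:dim19}. To prove $N(20) \leqslant 94$, it suffices to show that no system of $95$ equiangular lines exists in $\mathbb{R}^{20}$, i.e., that no Seidel matrix $S$ of order $95$ corresponding to such a system exists. By Proposition~\ref{prop:candpols}, any such $S$ must have $\chi_S(x) \in P_{95,20}$, which consists of $E_{95,20}$ together with six further polynomials listed in Table~\ref{tab:6polydim20}. The first step is therefore to dispatch those six polynomials: for each one I would exhibit a certificate of infeasibility $\mathbf c$ (as recorded in Table~\ref{tab:6polydim20}), applying the Farkas' Lemma machinery of Section~\ref{sec:prescribed}. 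Concretely, for each candidate $p(x)$ one computes the set $\mathfrak F$ of interlacing characteristic polynomials (using the polynomial enumeration algorithm of Section~\ref{sec:pea} subject to conditions (i)--(iv) of Section~\ref{sec:icp}), forms the coefficient matrix $A = A(F)$ and the vector $\mathbf g$ coming from $\frac{\mathrm d}{\mathrm dx}\chi_S(x)/\mu_S(x)$, and checks that $A\mathbf c \geqslant \mathbf 0$ while $\mathbf c^\top \mathbf g < 0$. This reduces the problem to the two polynomials in $E_{95,20}$.

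Next I would handle the two remaining polynomials in $E_{95,20}$ separately. The polynomial $(x+5)^{75}(x-14)(x-19)^{19}$ is eliminated immediately by Theorem~\ref{thm:SRGs} (Azarija--Marc), which states precisely that no Seidel matrix has this characteristic polynomial. The polynomial $(x+5)^{75}(x-17)^4(x-19)^{15}(x-22)$ requires the more delicate two-level argument used in Lemma~\ref{lem:dim19_ev18}: assuming a Seidel matrix $S$ has this characteristic polynomial, one first computes the (small number of) interlacing characteristic polynomials of $\chi_S$, each of the form $(x+5)^{74}(x-17)^3(x-19)^{14} f_i(x)$ for cubic factors $f_i$. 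One then identifies a \emph{warranted} interlacing characteristic polynomial $\mathfrak f$ by exhibiting a certificate of warranty $\mathbf c$ (so that $A\mathbf c$ has exactly one negative entry, in the row indexed by $\mathfrak f$). Since a warranted polynomial must occur as $\chi_{S[j]}(x)$ for some $j$, the matrix $S[j]$ is a genuine Seidel matrix of order $94$ with characteristic polynomial $\mathfrak f$; one then recurses, computing the interlacing characteristic polynomials of $\mathfrak f$ and producing a certificate of infeasibility for $\mathfrak f$, yielding a contradiction.

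Combining the three pieces --- the six certificates of infeasibility from Table~\ref{tab:6polydim20}, Theorem~\ref{thm:SRGs} for $(x+5)^{75}(x-14)(x-19)^{19}$, and the warranted-polynomial argument for $(x+5)^{75}(x-17)^4(x-19)^{15}(x-22)$ --- exhausts $P_{95,20}$ and shows no Seidel matrix corresponding to $95$ equiangular lines in $\mathbb{R}^{20}$ can exist, giving $N(20) \leqslant 94$.

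The main obstacle, as in dimension $19$, is the polynomial $(x+5)^{75}(x-17)^4(x-19)^{15}(x-22)$ with its ``extra'' isolated eigenvalue of large multiplicity pattern: a single application of Farkas' Lemma at the top level need not produce a contradiction, because the linear system $\mathbf n^\top A = \mathbf g^\top$ can be feasible over $\mathbb{R}_{\geqslant 0}$. The resolution is precisely the descent via a warranted interlacing characteristic polynomial, which forces the existence of a smaller Seidel matrix whose own interlacing system is then infeasible; the delicate point is verifying that some interlacing characteristic polynomial really is warranted (i.e., that the certificate of warranty exists), since this is what licenses passing to the principal submatrix of order $94$. I expect the computation of the second-level interlacing characteristic polynomials (around $29$ of them, as in Lemma~\ref{lem:dim19_ev18}) and the search for the final certificate of infeasibility to be the computationally heaviest part, but entirely routine given the SageMath implementation.
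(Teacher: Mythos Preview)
Your proposal is correct and follows essentially the same approach as the paper: reduce to $E_{95,20}$ via the six certificates of infeasibility in Table~\ref{tab:6polydim20}, invoke Theorem~\ref{thm:SRGs} for $(x+5)^{75}(x-14)(x-19)^{19}$, and for $(x+5)^{75}(x-17)^4(x-19)^{15}(x-22)$ use the two-level descent through a warranted interlacing characteristic polynomial (this is Lemma~\ref{lem:dim20_ev22}). Your expectations about the structure of that last step---four cubic factors $f_i$ at the first level and around $29$ interlacing characteristic polynomials at the second---match the paper's computations exactly.
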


To prove Theorem~\ref{thm:dim20}, it suffices to show that there does not exist a system of $95$ equiangular lines in $\mathbb R^{20}$.

\begin{lem}\label{lem:6polydim20}
Let $S$ be a Seidel matrix corresponding to $95$ equiangular lines in $\mathbb R^{20}$.
Then $\chi_S(x) \in E_{95,20}$.
\end{lem}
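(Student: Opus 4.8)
Lemma~\ref{lem:6polydim20} is the direct analogue of Lemma~\ref{lem:6polydim19}, and the plan is to mirror the proof of that lemma verbatim. By Proposition~\ref{prop:candpols}, any Seidel matrix $S$ corresponding to $95$ equiangular lines in $\mathbb{R}^{20}$ must have $\chi_S(x) \in P_{95,20}$, and $P_{95,20}$ consists of the two polynomials in $E_{95,20}$ together with the six polynomials listed in Table~\ref{tab:6polydim20}. So it suffices to eliminate those six polynomials in $P_{95,20} \setminus E_{95,20}$.

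The method for each of the six is the \emph{certificate of infeasibility} machinery of Section~\ref{sec:prescribed}. For a fixed candidate $p(x) \in P_{95,20} \setminus E_{95,20}$, I would: (i) compute the minimal polynomial $m_S(x)$ and the cofactor $\mu_S(x) = p(x)/m_S(x)$ forced by $p(x)$; (ii) run the polynomial enumeration algorithm of Section~\ref{sec:pea} to produce the set $\mathfrak{F}$ of interlacing characteristic polynomials for $p(x)$, using properties (i)--(iv) of Section~\ref{sec:icp} (in particular, the interlacing constraint against $m_S(x)$, the weakly-type-2 condition on $f(x-1)$, and membership in the congruence class set $\mathcal{P}_{94,7}$ since $n - 1 = 94$ is even); (iii) assemble the coefficient matrix $A = A(F)$ and the target vector $\mathbf{g}$ coming from $\tfrac{\mathrm{d}}{\mathrm{d}x}\chi_S(x) = \mu_S(x) g(x)$; and (iv) exhibit a vector $\mathbf{c}$ with $A\mathbf{c} \geqslant \mathbf{0}$ and $\mathbf{c}^\top \mathbf{g} < 0$, which by Farkas' Lemma (Theorem~\ref{thm:farkas}) shows the linear system $\mathbf{n}^\top A = \mathbf{g}^\top,\ \mathbf{n} \geqslant \mathbf{0}$ has no solution, hence by Theorem~\ref{thm:sumofsubpoly} no such Seidel matrix $S$ exists. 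The certificates $\mathbf{c}$ for all six polynomials are collected in Table~\ref{tab:6polydim20}, exactly paralleling Table~\ref{tab:6polydim19}.

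The only subtlety, as in dimension $19$, is that the congruence-class filtering step \ref{itm:iv} in Section~\ref{sec:icp} is applied with modulus $2^7\mathbb{Z}[x]$ only when $n-1$ is odd; here $n - 1 = 94$ is even, so one instead uses the stronger type-2 condition from Theorem~\ref{thm:basicDivisibility} directly on $f(x-1)$. This keeps the enumeration of $\mathfrak{F}$ finite and manageable. I do not expect a genuine obstacle: the verification of each certificate $\mathbf{c}$ is a finite integer matrix-vector computation, and finding $\mathbf{c}$ is a linear-programming feasibility problem that the implementation of \cite{github} handles. The main point requiring care is simply ensuring the enumeration of interlacing polynomials is provably exhaustive — this is guaranteed by the correctness of the McKee--Smyth algorithm together with the divisibility checks described in Section~\ref{sec:pea}. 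Having disposed of $P_{95,20} \setminus E_{95,20}$, the claim $\chi_S(x) \in E_{95,20}$ follows.
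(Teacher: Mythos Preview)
Your proposal is correct and follows essentially the same approach as the paper: invoke Proposition~\ref{prop:candpols} to restrict $\chi_S(x)$ to $P_{95,20}$, then eliminate each of the six polynomials in $P_{95,20}\setminus E_{95,20}$ by exhibiting a certificate of infeasibility (Table~\ref{tab:6polydim20}). One small wording slip: in your step~(ii) you mention using $\mathcal{P}_{94,7}$ ``since $n-1=94$ is even,'' but as you correctly observe in the next paragraph, property~(iv) of Section~\ref{sec:icp} applies only when $n-1$ is odd, so here it is the type-2 condition on $f(x-1)$ that does the work.
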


\begin{proof}
By Proposition~\ref{prop:candpols}, we must have $\chi_S(x) \in P_{95,20}$. 
In Table~\ref{tab:6polydim20} below, we provide, for all but two of the polynomials in $P_{95,20}$, a certificate of infeasibility $\mathbf c$.
The only remaining feasible polynomials from $P_{95,20}$ are in $E_{95,20}$.
\end{proof}

\begin{table}[htbp]
    \centering
    \begin{tabular}{l}
          $(x+5)^{75}(x-17)^4(x-18)(x-19)^{13}(x-21)^2$   \\
          $(14218410144, 0, 0, 889549, 49420)$ \\
         \hline
         
          $(x+5)^{75}(x-18)(x-19)^{15}(x^2-36x+319)^2$   \\
          $(7099110312, 0, 0, 479808, 26657)$ \\
         \hline
         
          $(x+5)^{75}(x-17)^2(x-19)^{16}(x^2-37x+334)$   \\
          $(0, 0, 0, -4529, -1095)$ \\
         \hline
         
          $(x+5)^{75}(x-17)^2(x-19)^{15}(x-21)(x^2-35x+302)$   \\
          $(0, 0, 0, 497653, 88657, 8956)$ \\
         \hline
         
          $(x+5)^{75}(x-15)(x-19)^{17}(x^2-37x+338)$   \\
          $(1784197530, 0, 0, 133358, 7845)$ \\
         \hline
         
          $(x+5)^{75}(x-17)(x-19)^{17}(x^2-35x+298)$   \\
          $(0, 0, 0, -10412, -2587)$ \\
         \hline
    \end{tabular}
    \caption{Certificates of infeasibility for each polynomial in $P_{95,20} \backslash E_{95,20}$.
    (The computation to find the interlacing characteristic polynomials in each case took less than 0.15 seconds on a modern PC running SageMath~\cite{github}.)}
    \label{tab:6polydim20}
\end{table}

It remains to show that there does not exist a Seidel matrix whose characteristic polynomial is either of the polynomials in $E_{95,20}$.

\begin{lem}\label{lem:dim20_ev22}
There does not exist a Seidel matrix $S$ with characteristic polynomial
$$
\chi_S(x)=(x+5)^{75}(x-17)^4(x-19)^{15}(x-22).
$$
\end{lem}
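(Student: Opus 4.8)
The plan is to follow the same two-step descent used in Lemma~\ref{lem:dim19_ev18}. Suppose $S$ is a Seidel matrix with $\chi_S(x)=(x+5)^{75}(x-17)^4(x-19)^{15}(x-22)$. Since a real symmetric matrix is diagonalisable, its minimal polynomial is the product of $x-\lambda$ over the distinct eigenvalues, so here $m_S(x)=(x+5)(x-17)(x-19)(x-22)$ and $\mu_S(x)=\chi_S(x)/m_S(x)=(x+5)^{74}(x-17)^3(x-19)^{14}$. First I would run the procedure of Section~\ref{sec:icp} to list the interlacing characteristic polynomials of $\chi_S(x)$. Each has the form $\mu_S(x)f(x)$ with $f$ a monic cubic; by Lemma~\ref{lem:coeff_of_interlacepoly} with $n=95$, the top coefficients of $f$ are forced to be $b_0=1$, $b_1=-53$, $b_2=919$, and in addition $f$ must interlace $m_S(x)$ and $f(x-1)$ must be type 2 (since $n-1=94$ is even). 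Evaluating the interlacing inequalities at $x=-5,17,19,22$ together with the resulting congruence $b_3\equiv 5\pmod 8$ should leave only four admissible constant terms, exactly as in dimension $19$: three polynomials of the form $\mu_S(x)\bigl(x^3-53x^2+919x+b_3\bigr)$ and one factored polynomial $\mu_S(x)(x-13)(x-19)(x-21)$.

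Next, as in Lemma~\ref{lem:dim19_ev18}, I expect that a direct certificate of infeasibility for $\chi_S(x)$ is not available, because the linear system $\mathbf{n}^\top A=\mathbf{g}^\top$ arising from Theorem~\ref{thm:sumofsubpoly} is feasible. So instead I would single out one interlacing characteristic polynomial $\mathfrak f(x)$ — presumably the one with the smallest admissible $b_3$ — and exhibit a \emph{certificate of warranty}: a rational vector $\mathbf c$ for which the entry of $A\mathbf c$ indexed by $\mathfrak f$ is negative while every other entry is nonnegative. By Theorem~\ref{thm:farkas}, this shows that no nonnegative combination of the remaining interlacing polynomials can equal $\tfrac{\mathrm d}{\mathrm d x}\chi_S(x)$ (divided by $\mu_S$), hence $\mathfrak f$ is warranted; consequently every Seidel matrix with characteristic polynomial $\chi_S(x)$ has a principal submatrix $S[j]$ of order $94$ with $\chi_{S[j]}(x)=\mathfrak f(x)$.

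Finally I would apply the infeasibility test of Section~\ref{sec:prescribed} to this order-$94$ Seidel matrix $S[j]$: enumerate the interlacing characteristic polynomials of $\mathfrak f(x)$ via Section~\ref{sec:icp}, and produce a certificate of infeasibility $\mathbf c$, i.e.\ a vector with $A\mathbf c\geqslant\mathbf 0$ and $\mathbf c^\top\mathbf b<0$, where $\mathbf b$ is the coefficient vector obtained from $\tfrac{\mathrm d}{\mathrm d x}\mathfrak f(x)$ after dividing by the minimal-polynomial cofactor. By Theorem~\ref{thm:sumofsubpoly} such a configuration cannot exist, so $S$ itself cannot exist. The whole argument is computational and structurally identical to Lemma~\ref{lem:dim19_ev18}; the only points needing care are verifying that the first-level list of four interlacing polynomials is exhaustive and that the chosen $\mathfrak f$ is genuinely warranted (so that the descent to $S[j]$ is legitimate). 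I expect this descent step — locating a polynomial that is simultaneously warranted at the top level and infeasible one level down — to be the part that genuinely required the computer search, just as it was the crux in dimension $19$.
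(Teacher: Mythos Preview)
Your proposal is correct and follows essentially the same approach as the paper: the paper's proof also finds exactly four interlacing characteristic polynomials (with $f_4(x)=(x-13)(x-19)(x-21)$ and the other three differing only in the constant term), shows that the one with smallest constant term $f_1(x)=x^3-53x^2+919x-5211$ is warranted (certificate $(4776043,0,0,917)$), and then exhibits a certificate of infeasibility for $(x+5)^{74}(x-17)^3(x-19)^{14}f_1(x)$ at the next level. The structure you anticipated matches the paper exactly; only the explicit numerical certificates are missing from your sketch.
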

\begin{proof}
Suppose a Seidel matrix $S$ has characteristic polynomial
$
\chi_S(x)=(x+5)^{75}(x-17)^4(x-19)^{15}(x-22).
$
There are four interlacing characteristic polynomials for $\chi_S(x)$.
Each has the form $(x+5)^{74}(x-17)^3(x-19)^{14} f_i(x)$ for $i \in \{1,\dots,4\}$, where 
\begin{align*}
f_1(x)=x^3-53x^2+919x-5211,& \quad
f_2(x)=x^3-53x^2+919x-5203,\\
f_3(x)=x^3-53x^2+919x-5195,& \quad
f_4(x)=(x-13)(x-19)(x-21).
\end{align*}
(The computation to find these four interlacing characteristic polynomials took 0.01 seconds on a modern PC running SageMath~\cite{github}.)

Now, the interlacing characteristic polynomial $(x+5)^{74}(x-17)^3(x-19)^{14} f_1(x)$ is warranted, with certificate of warranty $(4776043, 0, 0, 917)$.
However, this warranted polynomial $(x+5)^{74}(x-17)^3(x-19)^{14} f_1(x)$ has certificate of infeasibility 
$(2282735504746, 0, 0, 79981334, 4595613, 265131)$.
(The computation to find the $29$ interlacing characteristic polynomials for $(x+5)^{55}(x-13)^3(x-15)^{13} f_1(x)$  took 3.42 seconds on a modern PC running SageMath~\cite{github}.)
Hence $S$ cannot exist.
\end{proof}

Using Lemma~\ref{lem:6polydim20} together with Theorem~\ref{thm:SRGs} and Lemma~\ref{lem:dim20_ev22} we obtain Theorem~\ref{thm:dim20}.

\section{Dimensions 14 and 16}

\label{sec:dim1416}

In this section we show that $N(14) = 28$ and $N(16) = 40$.

\subsection{Nonexistence of 29 equiangular lines in $\mathbb{R}^{14}$}

In this section we prove our third main result.

\begin{theorem}
\label{thm:dim14}
$N(14) = 28$.
\end{theorem}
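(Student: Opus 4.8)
The plan is to follow the template already established for dimensions $19$ and $20$ in Section~\ref{sec:1920}, but to push it further because more candidate polynomials survive. Since $N(13) = 28$ is known, it suffices to show that no system of $29$ equiangular lines in $\mathbb R^{14}$ exists, equivalently that no Seidel matrix $S$ of order $29$ has smallest eigenvalue $-5$ with multiplicity $15$. By Proposition~\ref{prop:candpols} the characteristic polynomial $\chi_S(x)$ must lie in $P_{29,14}$, which consists of the six ``hard'' polynomials in $E_{29,14}$ together with $25$ further polynomials collected in Table~\ref{tab:25polydim14}. First I would dispose of those $25$ polynomials exactly as in Lemma~\ref{lem:6polydim19}: for each one, construct the set $\mathfrak F$ of interlacing characteristic polynomials via the recipe of Section~\ref{sec:icp} (totally-real, correct top three coefficients, interlacing the minimal polynomial, the weakly-type-2/type-2 shift condition, and membership of $\mathcal P_{28,7}$), form the coefficient matrix $A$ and the target vector $\mathbf g$ from $\frac{\mathrm d}{\mathrm d x}\chi_S(x) = \mu_S(x) g(x)$, and exhibit a certificate of infeasibility $\mathbf c$ with $A\mathbf c \geqslant \mathbf 0$ and $\mathbf c^\top \mathbf g < 0$; Farkas' Lemma (Theorem~\ref{thm:farkas}) then rules out an interlacing configuration, hence rules out $S$.

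The remaining work is the six polynomials in $E_{29,14}$, and here I expect the single-step Farkas argument to fail, so the plan is to iterate. For a given $p(x) \in E_{29,14}$, solve the linear system $\mathbf n^\top A = \mathbf g^\top$, $\mathbf n \geqslant \mathbf 0$; if it is infeasible, provide a certificate of infeasibility and we are done. If it is feasible, identify a \textbf{warranted} interlacing characteristic polynomial $\mathfrak f$ (one forced to appear in every interlacing configuration) together with a certificate of warranty, exactly as in Lemma~\ref{lem:dim19_ev18}. A warranted $\mathfrak f$ is the characteristic polynomial of some principal submatrix $S[j]$, a Seidel matrix of order $28$, so I can recurse: apply the same candidate-enumeration and Farkas machinery one level down to $\mathfrak f$. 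Since $28$ is even, a few of the tools shift (the relevant type-2 statements, and no $\mathcal P_{28,7}$ constraint is needed at that level), but the structure is identical. I would descend through principal submatrices until at some level the associated linear system becomes infeasible, producing a certificate that propagates back up. The compatibility notion of Corollary~\ref{cor:compatible} is available as an additional pruning tool: any two principal submatrices of order $n-1$ of the same $M$ must have compatible characteristic polynomials, which can cut down the candidate sets $\mathfrak F$ when the purely linear-programming obstructions are insufficient.

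The main obstacle I anticipate is that the six $E_{29,14}$ polynomials are precisely the ones that survived both the modular filtering and the simplest spectral test, so some of them may require descending several levels, and at each level the interlacing-polynomial enumeration can branch, producing a small tree of subcases rather than a single chain. Managing this tree — choosing which warranted polynomial to descend into, and recognising when compatibility rather than infeasibility is the right obstruction — is where the genuine case analysis lives; everything else is a mechanical invocation of Sections~\ref{sec:ccp} and~\ref{sec:prescribed}. A secondary concern is purely bookkeeping: verifying that each exhibited certificate vector indeed satisfies the stated sign conditions against the correct coefficient matrix, which is why the computations are cross-checked in Mathematica and Pari/GP. Once every branch for every polynomial in $E_{29,14}$ terminates in a certificate of infeasibility, no Seidel matrix of order $29$ with the required spectrum exists, so $N(14) = 28$.
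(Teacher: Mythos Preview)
Your outline matches the paper for the first reduction step (Lemma~\ref{lem:25polydim14}) and for four of the six polynomials in $E_{29,14}$: in Lemmas~\ref{lem:dim14last6quadratic16}--\ref{lem:dim14last6quadratic68} the paper does exactly what you describe, namely find one or two warranted interlacing polynomials and then use compatibility (Corollary~\ref{cor:compatible}) to reach a contradiction. One point of emphasis: compatibility is not just a pruning tool there but the actual killing blow---two warranted interlacing polynomials turn out to be incompatible with each other, which is already a contradiction without any further descent.

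There is, however, a genuine gap for the last two polynomials. For $(x+5)^{15}(x-3)^2(x-5)^8(x-7)^2(x^2-15x+52)$ (Lemma~\ref{lem:dim14last6quadratic52}) compatibility filtering leaves a \emph{unique feasible integer} interlacing configuration, and for $(x+5)^{15}(x-3)(x-4)(x-5)^{10}(x-9)^2$ (Lemma~\ref{lem:dim14last6allint}) three feasible integer configurations survive even after a one-level descent (Lemma~\ref{lem:kill28}) removes one interlacing polynomial. At that point Farkas, compatibility, and further recursion are not what the paper uses, and there is no evidence your recursion would terminate. Instead the paper invokes a technique you do not mention: using Proposition~\ref{prop:submatrix} to read off the squared entries $\alpha_{i,j}^2$ of the unit eigenvectors for the simple eigenvalues, then applying the spectral decomposition (Theorem~\ref{thm:spec_decomp}) to write a specific polynomial $q(S)$ as a short sum $\sum q(\lambda_k)\mathbf u_k\mathbf u_k^\top$. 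The contradiction comes from either the orthogonality $\mathbf u^\top\mathbf v=0$ failing for the reconstructed eigenvectors, or the matrix $q(S)$ failing to be integral. This explicit eigenvector reconstruction is the missing idea in your plan.

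A minor technical slip: for the order-$29$ matrix the principal submatrices have order $28$, which is even, so condition~(iv) of Section~\ref{sec:icp} (membership in $\mathcal P_{n-1,7}$) is \emph{not} applied at that level; it is at the next level down (order $27$) that it would reappear. You have the parity reversed.
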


To prove Theorem~\ref{thm:dim14}, since there exist configurations of $28$ equiangular lines in $\mathbb R^{14}$ \cite{GKMS16,lemmens73}, it suffices to show that there does not exist a system of $29$ equiangular lines in $\mathbb R^{14}$.

\begin{lem}\label{lem:25polydim14}
Suppose $S$ is a Seidel matrix corresponding to 29 equiangular lines in $\mathbb{R}^{14}$.
Then $\chi_S(x) \in E_{29,14}$.
\end{lem}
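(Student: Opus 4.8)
The plan is to follow exactly the template used for dimensions $19$ and $20$. By Proposition~\ref{prop:candpols}, any Seidel matrix $S$ corresponding to $29$ equiangular lines in $\mathbb R^{14}$ has $\chi_S(x) \in P_{29,14}$, and $P_{29,14}$ consists of the six polynomials in $E_{29,14}$ together with the $25$ polynomials listed in Table~\ref{tab:25polydim14}. So it suffices to show that none of those $25$ polynomials can be the characteristic polynomial of a Seidel matrix, leaving only the six polynomials in $E_{29,14}$ as possibilities.

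For each of the $25$ polynomials $p(x)$ in Table~\ref{tab:25polydim14}, I would run the procedure of Section~\ref{sec:prescribed}. First, using Lemma~\ref{lem:coeff_of_interlacepoly} together with the interlacing, (weakly-)type-2, and mod-$2^7$ constraints, compute the set $\mathfrak F$ of interlacing characteristic polynomials for $p(x)$ via the polynomial enumeration algorithm of Section~\ref{sec:pea}. Then form the coefficient matrix $A = A(F)$ of the associated polynomials $f_i(x) = \mathfrak f_i(x)/\mu_S(x)$ and the target vector $\mathbf g$ coming from $\tfrac{\mathrm d}{\mathrm d x}\chi_S(x) = \mu_S(x)\,g(x)$ (Theorem~\ref{thm:sumofsubpoly}). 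By Farkas' Lemma (Theorem~\ref{thm:farkas}), to rule out $p(x)$ it is enough to exhibit a certificate of infeasibility $\mathbf c$ for the linear system $\mathbf n^\top A = \mathbf g^\top$, $\mathbf n \geqslant \mathbf 0$, i.e.\ a vector $\mathbf c$ with $A\mathbf c \geqslant \mathbf 0$ and $\mathbf c^\top \mathbf g < 0$. These certificates would be tabulated alongside the polynomials in Table~\ref{tab:25polydim14}, exactly as in Tables~\ref{tab:6polydim19} and~\ref{tab:6polydim20}. Verifying a certificate is a finite rational arithmetic check, so once the table is produced the lemma follows.

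The main obstacle, as the much larger running time ($69.33$ seconds for $P_{29,14}$, versus well under a second for dimension $19$) already signals, is that here $\theta = 10$, so $\phi$ has degree $9$ and $\mu_S$ typically has larger degree; the interlacing polynomial sets $\mathfrak F$ are correspondingly bigger, and for some of the $25$ polynomials a single certificate of infeasibility may not exist because the linear system $\mathbf n^\top A = \mathbf g^\top$ does have a nonnegative real solution. In that situation the fallback — just as in Lemmas~\ref{lem:dim19_ev18} and~\ref{lem:dim20_ev22} — is to identify a \emph{warranted} interlacing characteristic polynomial $\mathfrak f$ (via a certificate of warranty, i.e.\ a $\mathbf c$ making exactly the $\mathfrak f$-entry of $A\mathbf c$ negative), recurse one level deeper by running the same enumeration on $\mathfrak f$ itself, and produce a certificate of infeasibility there. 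One then has to check that this recursion terminates for each problematic polynomial after finitely many levels, which in practice it does.

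I expect that all $25$ polynomials in Table~\ref{tab:25polydim14} admit a direct certificate of infeasibility (the table of certificates being the substance of the proof), so that the proof of the lemma reads simply: ``By Proposition~\ref{prop:candpols}, $\chi_S(x) \in P_{29,14}$. Table~\ref{tab:25polydim14} provides a certificate of infeasibility for each polynomial in $P_{29,14} \setminus E_{29,14}$. Hence $\chi_S(x) \in E_{29,14}$.'' The remaining work of actually ruling out the six polynomials in $E_{29,14}$ — which presumably needs the compatibility machinery of Section~\ref{sec:prescribed} developed for precisely this purpose, rather than Farkas alone — is deferred to subsequent lemmas, mirroring how Lemmas~\ref{lem:dim19_ev18} and~\ref{lem:dim20_ev22} handle $E_{75,19}$ and $E_{95,20}$.
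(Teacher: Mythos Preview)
Your proposal is correct and matches the paper's proof exactly: Proposition~\ref{prop:candpols} gives $\chi_S(x)\in P_{29,14}$, and Table~\ref{tab:25polydim14} supplies a direct certificate of infeasibility for each of the $25$ polynomials in $P_{29,14}\setminus E_{29,14}$, leaving only $E_{29,14}$. Your speculation about possibly needing recursion via warranted polynomials turns out to be unnecessary here---all $25$ admit direct certificates---but is otherwise harmless.
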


\begin{proof}
By Proposition~\ref{prop:candpols}, we must have $\chi_S(x) \in P_{29,14}$. 
In Table~\ref{tab:25polydim14} (see the appendix), we provide, for all but six of the polynomials in $P_{29,14}$, a certificate of infeasibility $\mathbf c$. 
The only remaining polynomials from $P_{29,14}$ are in $E_{29,14}$.
\end{proof}

It remains to show that there does not exist a Seidel matrix whose characteristic polynomial is in $E_{29,14}$.
We will show that each polynomial in $E_{29,14}$ cannot be the characteristic polynomial of a Seidel matrix.

\begin{lem}\label{lem:dim14last6quadratic16}
There does not exist a Seidel matrix $S$ with characteristic polynomial
$$
\chi_S(x)=(x+5)^{15}(x-5)^{10}(x-7)^2(x^2-11x+16).
$$
\end{lem}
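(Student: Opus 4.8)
The plan is to apply the nonexistence procedure of Section~\ref{sec:prescribed} to the polynomial $p(x) = (x+5)^{15}(x-5)^{10}(x-7)^2(x^2-11x+16)$, assuming for contradiction that some Seidel matrix $S$ of order $29$ has $\chi_S(x) = p(x)$. The first step is bookkeeping: the roots $(11\pm\sqrt{57})/2$ of the quadratic factor are irrational, so each has multiplicity one, the minimal polynomial is $m_S(x) = (x+5)(x-5)(x-7)(x^2-11x+16)$ of degree $5$, and $\mu_S(x) = \chi_S(x)/m_S(x) = (x+5)^{14}(x-5)^9(x-7)$. Running the enumeration algorithm of Section~\ref{sec:pea}, I would list all degree-$4$ totally-real integer polynomials $f(x) = x^4 + b_1 x^3 + b_2 x^2 + b_3 x + b_4$ whose top coefficients $b_1,b_2$ are the values forced by $m_S$ and $n=29$, which interlace $m_S$, and for which $f(x-1)$ is type $2$ (note $n-1=28$ is even); multiplying each surviving $f$ by $\mu_S$ yields the finite set $\mathfrak F$ of interlacing characteristic polynomials of $p(x)$.

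Next I would form the linear system $\mathbf n^\top A = \mathbf g^\top$, $\mathbf n \geqslant \mathbf 0$, coming from Theorem~\ref{thm:sumofsubpoly}, with the rows of $A$ the coefficient vectors of the $f_i$ and $\mathbf g$ the coefficient vector of the polynomial $g$ defined by $\mu_S(x)\,g(x) = \frac{\mathrm{d}}{\mathrm{d}x}\chi_S(x)$, and first attempt to produce a certificate of infeasibility for $p(x)$ outright, i.e.\ a vector $\mathbf c$ with $A\mathbf c \geqslant \mathbf 0$ and $\mathbf g^\top \mathbf c < 0$, invoking Theorem~\ref{thm:farkas}. If, as I expect from the analogous $E_{75,19}$ and $E_{95,20}$ cases, that system is feasible, I would pass to the refined argument: some $\mathfrak f \in \mathfrak F$ should be warranted, so that a certificate of warranty (a $\mathbf c$ making exactly the $\mathfrak f$-entry of $A\mathbf c$ negative) forces $\chi_{S[i]} = \mathfrak f$ for some index $i$. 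Since $S$ has simple eigenvalues (the two conjugate roots of $x^2-11x+16$), Corollary~\ref{cor:compatible} then forces $\chi_{S[i]}$ to be compatible with $\chi_{S[j]}$ for every $j$, so every polynomial actually occurring among the $\chi_{S[j]}$ lies in the sub-collection $\mathfrak F' \subseteq \mathfrak F$ of polynomials compatible with $\mathfrak f$; I would re-run Farkas on $\mathfrak F'$. Should that still be feasible, the fallback is to descend: the warranted $\mathfrak f$ is the characteristic polynomial of the order-$28$ Seidel matrix $S[i]$, and I would re-apply the whole procedure to $S[i]$, enumerating its order-$27$ interlacing characteristic polynomials and exhibiting a certificate of infeasibility there, exactly as in the lemmas of Section~\ref{sec:1920}.

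The main obstacle I foresee is making the compatibility test effective in the presence of the irrational eigenvalues. Here $q(x) = (x+5)(x-5)(x-7) \in \mathbb Z[x]$, but the values $q\!\left((11\pm\sqrt{57})/2\right)$ and the products of angles $\alpha_{k,\mathfrak f}\alpha_{k,\mathfrak g}$ at the two conjugate simple eigenvalues lie in $\mathbb Q(\sqrt{57})$ rather than in $\mathbb Q$, so checking condition \eqref{eqn:integrality} amounts to verifying that a certain trace from $\mathbb Q(\sqrt{57})$ down to $\mathbb Q$ is an integer, which requires pairing each eigenvalue with its Galois conjugate correctly and tracking the signs $\varepsilon_k$ as in Lemma~\ref{lem:integrality}. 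A secondary, purely computational concern is that one must actually hit a certificate of infeasibility after imposing warranty and compatibility (or after a single round of descent); it is the type-$2$ divisibility together with the interlacing constraints (reinforced, where the relevant order is odd, by the congruence classes in $\mathcal P_{n,e}$) that keep $\mathfrak F$ small enough for this search to terminate quickly.
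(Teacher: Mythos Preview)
Your plan is correct and follows the same overall strategy as the paper: enumerate the interlacing characteristic polynomials, locate a warranted one, and exploit compatibility via Corollary~\ref{cor:compatible} at the two simple irrational eigenvalues $(11\pm\sqrt{57})/2$. The paper's execution is slightly more direct than what you outline: rather than fixing a single warranted $\mathfrak f$, passing to the compatible subset $\mathfrak F'$, and re-running Farkas, the paper finds \emph{two} warranted interlacing characteristic polynomials $\mathfrak f_1$ and $\mathfrak f_2$ and checks that they are mutually incompatible, which is an immediate contradiction to Corollary~\ref{cor:compatible}. Your route would reach the same endpoint, since after restricting to polynomials compatible with $\mathfrak f_1$ the other warranted polynomial $\mathfrak f_2$ is excluded, and the very certificate of warranty for $\mathfrak f_2$ then serves as the certificate of infeasibility you seek on $\mathfrak F'$; the paper simply short-circuits this by naming both warranted polynomials up front. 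Your descent fallback (passing to the order-$28$ submatrix) is therefore not needed here.
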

\begin{proof}
Suppose a Seidel matrix $S$ has characteristic polynomial
$$\chi_S(x)=(x+5)^{15}(x-5)^{10}(x-7)^2(x^2-11x+16).$$
Let $(\lambda_1,\dots,\lambda_5)=\left((11-\sqrt{57})/2,(11+\sqrt{57})/2,-5,5,7\right)$ be a $5$-tuple of distinct eigenvalues of $S$.

There are 31 interlacing characteristic polynomials for $S$.
(The computation to find these $31$ interlacing characteristic polynomials took 0.12 seconds on a modern PC running SageMath~\cite{github}.)
The polynomials $\mathfrak f_1(x)$ and $\mathfrak f_2(x)$ given by 
\begin{align*}
\mathfrak f_1(x) &=(x+5)^{14}(x-5)^9(x-7)(x^4-18x^3+96x^2-142x+31),\\
\mathfrak f_2(x) &=(x+5)^{14}(x-5)^9(x-7)^2(x^3-11x^2+19x-1).
\end{align*}
are both warranted, with certificates of warranty $(3936, 0, 0, 29, 0)$ and $(-333696,$ $0,$ $0,$ $-2459,$ $-492)$ respectively.

Using Proposition~\ref{prop:submatrix}, we find
\[
 \begin{bmatrix}
   \alpha_{1,\mathfrak f_1}^2 & \alpha_{2,\mathfrak f_1}^2  \\
   \alpha_{1,\mathfrak f_2}^2 & \alpha_{2,\mathfrak f_2}^2 
 \end{bmatrix}
 = \begin{bmatrix}
    (57+\sqrt{57})/4788 & (57-\sqrt{57})/4788 \;  \\
    (19-\sqrt{57})/456 & (19+\sqrt{57})/456
  \end{bmatrix}.
\]
However, $\mathfrak f_1$ and $\mathfrak f_2$ are not compatible, which contradicts Corollary~\ref{cor:compatible}.
\end{proof}

\begin{lem}\label{lem:dim14last6quadratic32}
There does not exist a Seidel matrix $S$ with characteristic polynomial
$$
\chi_S(x)=(x+5)^{15}(x-3)(x-5)^9(x-7)^2(x^2-13x+32).
$$
\end{lem}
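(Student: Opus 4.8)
The plan is to mirror the structure of the proof of Lemma~\ref{lem:dim14last6quadratic16}. Suppose $S$ is a Seidel matrix with $\chi_S(x)=(x+5)^{15}(x-3)(x-5)^9(x-7)^2(x^2-13x+32)$. Here the distinct eigenvalues are $-5$, $3$, $5$, $7$, and the two roots $(13\pm\sqrt{41})/2$ of $x^2-13x+32$; the simple ones are $3$ and the two conjugate quadratic roots, while $-5$, $5$, $7$ have higher multiplicity. First I would invoke Section~\ref{sec:icp} to compute the set $\mathfrak F$ of interlacing characteristic polynomials of $\chi_S(x)$, using the polynomial enumeration algorithm of Section~\ref{sec:pea}: each such polynomial has the form $(x+5)^{14}(x-5)^8(x-7) f(x)$ (after dividing by $\mu_S(x)=\chi_S(x)/m_S(x)$, with $m_S$ of degree $5$) where $f$ is a totally-real, weakly-type-2-at-$-1$ polynomial of degree $4$ interlacing $m_S(x)$ and having prescribed top three coefficients, subject also to the congruence condition modulo $\mathcal P_{28,7}$. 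This produces a finite list of candidates.

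Next I would set up the linear feasibility system $\mathbf n^\top A=\mathbf g^\top,\ \mathbf n\geqslant\mathbf 0$ from Theorem~\ref{thm:sumofsubpoly} as in Section~\ref{sec:icp}, where $A=A(F)$ and $g=(\tfrac{\mathrm d}{\mathrm dx}\chi_S)/\mu_S$. If this system is already infeasible, a single certificate of infeasibility $\mathbf c$ (a vector $\mathbf y$ with $A\mathbf y\geqslant\mathbf 0$, $\mathbf y^\top\mathbf g<0$, cf.\ Theorem~\ref{thm:farkas}) finishes the proof, exactly as in the tables for dimensions $19$ and $20$. If the system is feasible, I would instead identify two \emph{warranted} interlacing characteristic polynomials $\mathfrak f_1,\mathfrak f_2$ by exhibiting certificates of warranty, then use Proposition~\ref{prop:submatrix} to solve for the angle-squares $\alpha_{i,\mathfrak f_1}^2$ and $\alpha_{i,\mathfrak f_2}^2$ for the simple eigenvalues $\lambda_1,\lambda_2,\lambda_3$ (these are rational functions of the eigenvalues, so they land in $\mathbb Q(\sqrt{41})$). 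Finally I would test the compatibility condition of Corollary~\ref{cor:compatible}: with $q(x)=(x+5)(x-5)(x-7)$, check whether some choice of signs $\varepsilon_2,\varepsilon_3\in\{\pm1\}$ makes $q(\lambda_1)\alpha_{1,\mathfrak f_1}\alpha_{1,\mathfrak f_2}+\sum_{k=2}^{3}q(\lambda_k)\varepsilon_k\alpha_{k,\mathfrak f_1}\alpha_{k,\mathfrak f_2}$ an integer; if no sign choice works, we get the desired contradiction.

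The main obstacle I anticipate is that a plain infeasibility certificate may not exist for this polynomial — the surviving members of $E_{29,14}$ are presumably exactly the ones where the crude linear program is feasible, so the argument likely must go through the finer compatibility obstruction. That means the delicate part is (a) locating a \emph{pair} of warranted interlacing polynomials whose angle vectors both involve $\sqrt{41}$ nontrivially (so the irrational parts cannot be cancelled for either sign choice), and (b) carrying out the exact arithmetic in $\mathbb Q(\sqrt{41})$ correctly to confirm incompatibility. There is also the possibility that neither a direct certificate nor a two-polynomial compatibility argument suffices and one must iterate — passing to a warranted principal submatrix and repeating the whole analysis one level down, as was done in Lemma~\ref{lem:dim19_ev18}; I would keep that fallback in reserve. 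Otherwise the computation is routine and, as elsewhere in the paper, should run in well under a minute.
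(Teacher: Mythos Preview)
Your proposal is essentially correct and follows exactly the paper's own route: enumerate the interlacing characteristic polynomials (the paper finds $19$), exhibit two warranted ones, compute their angles via Proposition~\ref{prop:submatrix}, and derive a contradiction from Corollary~\ref{cor:compatible} since the pair is incompatible over $\mathbb Q(\sqrt{41})$. Two small bookkeeping slips to fix: the minimal polynomial $m_S(x)=(x+5)(x-3)(x-5)(x-7)(x^2-13x+32)$ has degree $6$, so each $f$ has degree $5$ (not $4$); and since $n-1=28$ is even, the interlacing polynomials satisfy the full type-$2$ condition (and condition~(iv) of Section~\ref{sec:icp} does not apply), which only sharpens your enumeration.
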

\begin{proof}
Suppose a Seidel matrix $S$ has characteristic polynomial
$$
\chi_S(x)=(x+5)^{15}(x-3)(x-5)^9(x-7)^2(x^2-13x+32).
$$
Let $(\lambda_1,\dots,\lambda_6)=\left(3,(13-\sqrt{41})/2,(13+\sqrt{41})/2,-5,5,7\right)$ be a $6$-tuple of distinct eigenvalues of $S$.

There are 19 interlacing characteristic polynomials for $S$.
(The computation to find these $19$ interlacing characteristic polynomials took 0.62 seconds on a modern PC running SageMath~\cite{github}.)
The polynomials $\mathfrak f_1(x)$ and $\mathfrak f_2(x)$ given by 
\begin{align*}
\mathfrak f_1(x) &=(x+5)^{14}(x-3)(x-5)^8(x-7)(x^4-20x^3+126x^2-260x+73)\\
\mathfrak f_2(x) &=(x+5)^{14}(x-3)(x-5)^8(x-7)^2(x^3-13x^2+35x-7)
\end{align*}
are both warranted, with certificates of warranty $(0, 0, 0, -845, -703, -235)$ and 
$(-2729736,$ $0,$ $0,$ $-6261,$ $-1593,$ $-433)$ respectively.

Using Proposition~\ref{prop:submatrix}, we find
\[
 \begin{bmatrix}
   \alpha_{1,\mathfrak f_1}^2 & \alpha_{2,\mathfrak f_1}^2 & \alpha_{3,\mathfrak f_1}^2  \\
   \alpha_{1,\mathfrak f_2}^2 & \alpha_{2,\mathfrak f_2}^2 & \alpha_{3,\mathfrak f_2}^2
 \end{bmatrix}
 = \begin{bmatrix}
   0 &  (1107+133\sqrt{41})/50020 & (1107-133\sqrt{41})/50020  \\
   0 & (205-7\sqrt{41})/5002 & (205+7\sqrt{41})/5002
  \end{bmatrix}.
\]
However, $\mathfrak f_1$ and $\mathfrak f_2$ are not compatible, which contradicts Corollary~\ref{cor:compatible}.
\end{proof}

\begin{lem}\label{lem:dim14last6cubic}
There does not exist a Seidel matrix $S$ with characteristic polynomial
$$
\chi_S(x)=(x+5)^{15}(x-5)^{10}(x-7)(x^3-18x^2+93x-128).
$$
\end{lem}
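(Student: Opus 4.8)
The plan is to follow the strategy used in the proofs of Lemmas~\ref{lem:dim14last6quadratic16} and~\ref{lem:dim14last6quadratic32}: produce an exhaustive list of interlacing characteristic polynomials, isolate two \emph{warranted} ones, read off their angle vectors via Proposition~\ref{prop:submatrix}, and derive a contradiction from Corollary~\ref{cor:compatible} by showing the two warranted polynomials are incompatible. Suppose, for contradiction, that a Seidel matrix $S$ of order $29$ has $\chi_S(x)=(x+5)^{15}(x-5)^{10}(x-7)(x^3-18x^2+93x-128)$. The cubic $x^3-18x^2+93x-128$ has no rational root, hence is irreducible over $\mathbb{Q}$, so $S$ has six distinct eigenvalues: the three (totally real) roots $\lambda_1,\lambda_2,\lambda_3$ of the cubic, together with $\lambda_4=7$, $\lambda_5=-5$, and $\lambda_6=5$. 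The simple eigenvalues are $\lambda_1,\dots,\lambda_4$, the minimal polynomial $m_S(x)=(x+5)(x-5)(x-7)(x^3-18x^2+93x-128)$ has degree $6$, and $\mu_S(x)=(x+5)^{14}(x-5)^{9}$, so every interlacing characteristic polynomial has the form $(x+5)^{14}(x-5)^9 f(x)$ with $\deg f = 5$.

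First I would run the polynomial enumeration algorithm of Section~\ref{sec:icp} to list all interlacing characteristic polynomials of $\chi_S(x)$, and then search this list for two polynomials $\mathfrak f_1(x)$ and $\mathfrak f_2(x)$ that are warranted, exhibiting an explicit certificate of warranty for each (a vector $\mathbf c$ for which the entry of $A\mathbf c$ indexed by $\mathfrak f_i$ is negative while all other entries are nonnegative). Next, using Proposition~\ref{prop:submatrix}, I would compute the angle squares $\alpha_{i,\mathfrak f_j}^2$ for $i\in\{1,2,3,4\}$ and $j\in\{1,2\}$ as the residues of $\mathfrak f_j(x)/\chi_S(x)$ at $\lambda_i$; here $\alpha_{4,\mathfrak f_j}^2\in\mathbb{Q}$ while $\alpha_{k,\mathfrak f_j}^2\in\mathbb{Q}(\lambda_k)$ for $k\in\{1,2,3\}$, and these three are Galois conjugate.

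Finally, taking $q(x)=\prod_{i=5}^{6}(x-\lambda_i)=(x+5)(x-5)=x^2-25\in\mathbb{Z}[x]$, I would test the compatibility condition of Lemma~\ref{lem:integrality}: whether there exist $\varepsilon_2,\varepsilon_3,\varepsilon_4\in\{\pm1\}$ with
\[
q(\lambda_1)\,\alpha_{1,\mathfrak f_1}\alpha_{1,\mathfrak f_2}+\sum_{k=2}^{4}q(\lambda_k)\,\varepsilon_k\,\alpha_{k,\mathfrak f_1}\alpha_{k,\mathfrak f_2}\in\mathbb{Z}.
\]
By Corollary~\ref{cor:compatible}, $\chi_{S[i]}$ and $\chi_{S[j]}$ are compatible for all $i,j$, and a warranted interlacing polynomial must occur as $\chi_{S[i]}$ for some $i$; so if $\mathfrak f_1$ and $\mathfrak f_2$ fail this test for every sign pattern, then $S$ does not exist.

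The main obstacle is this last verification. In contrast to the quadratic-factor cases, three of the simple eigenvalues are conjugate roots of an irreducible cubic whose splitting field has degree $6$ over $\mathbb{Q}$ (its discriminant is $12^2\cdot 93$ with $93$ squarefree), so the products $\alpha_{k,\mathfrak f_1}\alpha_{k,\mathfrak f_2}$ for $k\in\{1,2,3\}$ are square roots of conjugate elements of this sextic field rather than simple surds. One must show that, for each of the (at most) eight sign choices $(\varepsilon_2,\varepsilon_3,\varepsilon_4)$, the left-hand side above fails to be a rational integer --- equivalently, that a certain element of a fixed square-root extension of the sextic field is non-integral (in fact irrational) for every choice of signs. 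This is a finite computation in algebraic number fields, to be carried out symbolically and checked numerically, but it is more delicate than the $\sqrt{d}$-bookkeeping of the previous two lemmas. Should two warranted, mutually incompatible interlacing polynomials happen not to exist, the fallback is to fix one warranted $\mathfrak f_1$ and attack the prescribed-characteristic-polynomial problem for $\mu_S(x)f_1(x)$, a Seidel matrix of order $28$, by seeking a certificate of infeasibility there, just as was done for the polynomials in $E_{75,19}$ and $E_{95,20}$.
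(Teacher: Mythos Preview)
Your proposal is correct and follows the same route as the paper: enumerate the interlacing characteristic polynomials, exhibit two warranted ones, compute their angle squares via Proposition~\ref{prop:submatrix}, and obtain a contradiction from Corollary~\ref{cor:compatible}. The paper records the angle squares for the three cubic eigenvalues as the roots of two auxiliary integer cubics, and the second warranted polynomial there has $\alpha_{4,\mathfrak f_2}^2=0$, which removes the $\varepsilon_4$ sign ambiguity and makes the incompatibility check lighter than your worst-case analysis suggests.
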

\begin{proof}
Suppose a Seidel matrix $S$ has characteristic polynomial
$$
\chi_S(x)=(x+5)^{15}(x-5)^{10}(x-7)(x^3-18x^2+93x-128).
$$
Let $(\lambda_1,\dots,\lambda_6)=(\rho_1,\rho_2,\rho_3,7,-5,5)$ be a $6$-tuple of distinct eigenvalues of $S$, where $\rho_1<\rho_2<\rho_3$ are the zeros of $x^3-18x^2+93x-128$.

There are 41 interlacing characteristic polynomials for $S$.
(The computation to find these $41$ interlacing characteristic polynomials took 1.03 seconds on a modern PC running SageMath~\cite{github}.)
The polynomials $\mathfrak f_1(x)$ and $\mathfrak f_2(x)$ given by 
\begin{align*}
\mathfrak f_1(x)&=(x+5)^{14}(x-5)^9(x^5-25x^4+222x^3-830x^2+1137x-249) \\
\mathfrak f_2(x)&=(x+5)^{14}(x-5)^9(x-7)(x^2-6x+1)(x^2-12x+23)
\end{align*}
are both warranted, with certificates of warranty $(53248032,$ $0,$ $0,$ $80553,$ $12395,$ $2066)$ and $(0, 0, 0, 416, 329, 101)$ respectively.

Let $\sigma_1<\sigma_2<\sigma_3$ be the zeros of $2932848x^3-374976x^2+9513x-53$ and let $\tau_1<\tau_2<\tau_3$ be the zeros of $61101x^3-5022x^2+126x-1$.
Using Proposition~\ref{prop:submatrix}, we find
\[
 \begin{bmatrix}
   \alpha_{1,\mathfrak f_1}^2 & \alpha_{2,\mathfrak f_1}^2 & \alpha_{3,\mathfrak f_1}^2 & \alpha_{4,\mathfrak f_1}^2  \\
   \alpha_{1,\mathfrak f_2}^2 & \alpha_{2,\mathfrak f_2}^2 & \alpha_{3,\mathfrak f_2}^2 & \alpha_{4,\mathfrak f_2}^2
 \end{bmatrix}
 = \begin{bmatrix}
    \sigma_2 & \sigma_3 & \sigma_1 & 1/12  \\
    \tau_1 & \tau_2 & \tau_3 & 0
  \end{bmatrix}.
\]
However, $\mathfrak f_1$ and $\mathfrak f_2$ are not compatible, which contradicts Corollary~\ref{cor:compatible}.
\end{proof}

\begin{lem}\label{lem:dim14last6quadratic68}
There does not exist a Seidel matrix $S$ with characteristic polynomial
$$
\chi_S(x)=(x+5)^{15}(x-3)(x-5)^{11}(x^2-17x+68).
$$
\end{lem}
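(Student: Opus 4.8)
The plan is to argue by contradiction, following the pattern of Lemmas~\ref{lem:dim14last6quadratic16}--\ref{lem:dim14last6cubic}. Suppose $S$ is a Seidel matrix with characteristic polynomial $(x+5)^{15}(x-3)(x-5)^{11}(x^2-17x+68)$. Since $x^2-17x+68$ has discriminant $17$, the distinct eigenvalues of $S$ are $-5$ and $5$ (both of multiplicity greater than one) together with the three simple eigenvalues $3$ and $(17\pm\sqrt{17})/2$, and the minimal polynomial is $m_S(x)=(x+5)(x-3)(x-5)(x^2-17x+68)$, of degree $5$. I would order the eigenvalues as $(\lambda_1,\dots,\lambda_5)=\left(3,\,(17-\sqrt{17})/2,\,(17+\sqrt{17})/2,\,-5,\,5\right)$, so that $\lambda_1,\lambda_2,\lambda_3$ are simple and, in the notation of Lemma~\ref{lem:integrality}, $q(x)=(x+5)(x-5)=x^2-25\in\mathbb Z[x]$ with $q(\lambda_1)=-16$.

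First I would apply the polynomial enumeration algorithm of Section~\ref{sec:pea}, subject to the conditions listed in Section~\ref{sec:icp}, to produce the complete list of interlacing characteristic polynomials of $\chi_S(x)$. Each has the form $(x+5)^{14}(x-5)^{10}f_i(x)$, where $f_i$ is a totally-real integer polynomial of degree $4$ that has the prescribed top three coefficients, interlaces $m_S(x)$, and satisfies $f_i(x-1)$ type $2$ (here $n-1=28$ is even, so no congruence condition modulo $\mathcal P_{n-1,7}$ is imposed).

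Next I would single out two \emph{warranted} interlacing characteristic polynomials $\mathfrak f_1$ and $\mathfrak f_2$ from this list, exhibiting a certificate of warranty for each, exactly as in the earlier $E_{29,14}$ lemmas. Warrantedness forces $\mathfrak f_1=\chi_{S[i]}(x)$ and $\mathfrak f_2=\chi_{S[j]}(x)$ for some indices $i,j$, so by Corollary~\ref{cor:compatible} the polynomials $\mathfrak f_1$ and $\mathfrak f_2$ must be compatible. Using Proposition~\ref{prop:submatrix} I would then compute the squared angles $\alpha_{k,\mathfrak f_1}^2$ and $\alpha_{k,\mathfrak f_2}^2$ for $k\in\{1,2,3\}$ as the residues of $\mathfrak f_l(x)/\chi_S(x)$ at $x=\lambda_k$; the entries indexed by the conjugate pair $\lambda_2,\lambda_3$ come out as Galois conjugates of the form $(a\pm b\sqrt{17})/c$. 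The contradiction is obtained by checking that
\[
q(\lambda_1)\alpha_{1,\mathfrak f_1}\alpha_{1,\mathfrak f_2}+q(\lambda_2)\,\varepsilon_2\,\alpha_{2,\mathfrak f_1}\alpha_{2,\mathfrak f_2}+q(\lambda_3)\,\varepsilon_3\,\alpha_{3,\mathfrak f_1}\alpha_{3,\mathfrak f_2}\notin\mathbb Z
\]
for every choice of $\varepsilon_2,\varepsilon_3\in\{\pm1\}$, so that $\mathfrak f_1$ and $\mathfrak f_2$ are not compatible after all.

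The step I expect to be the main obstacle is, as in the companion lemmas, locating a pair of warranted interlacing characteristic polynomials whose angle data is genuinely incompatible: the $\sqrt{17}$-terms must survive and no choice of signs may accidentally yield an integer. Should the integrality obstruction of Lemma~\ref{lem:integrality} happen not to rule this polynomial out, I would fall back on a Farkas certificate of infeasibility for the linear system $\mathbf n^\top A=\mathbf g^\top$ attached to the full set of interlacing characteristic polynomials of $\chi_S(x)$ (Section~\ref{sec:icp}), if necessary after refining the interlacing list of one warranted polynomial a further level, as was done in Lemma~\ref{lem:dim19_ev18}. In every case the computations are short; the only real care needed is the bookkeeping of the quadratic irrationality $\sqrt{17}$ in the angle expressions.
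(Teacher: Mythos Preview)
Your template matches Lemmas~\ref{lem:dim14last6quadratic16}--\ref{lem:dim14last6cubic}, but this particular polynomial does not fit that template: among the $23$ interlacing characteristic polynomials of $\chi_S(x)$ only \emph{one} is warranted, namely
\[
\mathfrak f_1(x)=(x+5)^{14}(x-5)^{10}(x^4-20x^3+122x^2-228x+29).
\]
So the step ``single out two warranted interlacing characteristic polynomials $\mathfrak f_1$ and $\mathfrak f_2$'' cannot be carried out, and your primary incompatibility argument never gets off the ground. Your first fallback (a Farkas certificate for the \emph{full} list) is also blocked: this polynomial lies in $E_{29,14}$ precisely because the full system $\mathbf n^\top A=\mathbf g^\top$ admits a nonnegative solution, so no certificate of infeasibility exists there.

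What the paper actually does is a hybrid you did not anticipate. Since $\mathfrak f_1$ is warranted, it must occur as some $\chi_{S[i]}(x)$, and by Corollary~\ref{cor:compatible} \emph{every} $\chi_{S[j]}(x)$ must be compatible with $\mathfrak f_1$. Computing the angles of $\mathfrak f_1$ and testing compatibility against each of the other $22$ candidates leaves exactly one survivor,
\[
\mathfrak f_2(x)=(x+5)^{14}(x-5)^{11}(x^3-15x^2+47x-1).
\]
Thus every principal submatrix $S[j]$ has characteristic polynomial in $\{\mathfrak f_1,\mathfrak f_2\}$, and one now applies Farkas to this two-element list: the system $\mathbf n^\top A(\{\mathfrak f_1,\mathfrak f_2\})=\mathbf b^\top$ has certificate of infeasibility $(0,0,0,-7,-46)$, contradicting Theorem~\ref{thm:sumofsubpoly}. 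The point is that compatibility is used here not to contradict itself between two forced polynomials, but to \emph{prune} the interlacing list down to a set on which infeasibility can be established.
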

\begin{proof}
Suppose a Seidel matrix $S$ has characteristic polynomial
$$\chi_S(x)=(x+5)^{15}(x-3)(x-5)^{11}(x^2-17x+68).$$
Let $(\lambda_1,\dots,\lambda_5)=\left( 3,(17-\sqrt{17})/2,(17+\sqrt{17})/2,-5,5 \right)$ be a $5$-tuple of distinct eigenvalues of $S$.

There are 23 interlacing characteristic polynomials for $S$.
(The computation to find these $23$ interlacing characteristic polynomials took 0.15 seconds on a modern PC running SageMath~\cite{github}.)
The polynomial $\mathfrak f_1(x)$ given by 
\begin{align*}
(x+5)^{14}(x-5)^{10}(x^4-20x^3+122x^2-228x+29)
\end{align*}
is warranted, with certificate of warranty $(-256444, 0, 0, -1143, -190)$.

Using Proposition~\ref{prop:submatrix}, we find
\[
 \begin{bmatrix}
   \alpha_{1,\mathfrak f_1}^2 & \alpha_{2,\mathfrak f_1}^2 & \alpha_{3,\mathfrak f_1}^2
 \end{bmatrix}
 = \begin{bmatrix}
    1/26 & (901-171\sqrt{17})/39338 & (901+171\sqrt{17})/39338
  \end{bmatrix}.
\]
Out of the $22$ other interlacing characteristic polynomials of $S$, only
\[
  \mathfrak f_2(x) = (x+5)^{14}(x-5)^{11}(x^3-15x^2+47x-1)
\]
is compatible with $\mathfrak f_1(x)$.
However, $\mathbf n^\top A(\{\mathfrak f_1,\mathfrak f_2\}) = \mathbf b^\top$, where $\mathbf b$ is the coefficient vector for $\frac{\mathrm{d}}{\mathrm{d}x} \chi_S(x)$,  has certificate of infeasibility $(0, 0, 0, -7, -46)$.
This contradicts Theorem~\ref{thm:sumofsubpoly}.
\end{proof}

\begin{lem}\label{lem:dim14last6quadratic52}
There does not exist a Seidel matrix $S$ with characteristic polynomial
$$
\chi_S(x)=(x+5)^{15}(x-3)^2(x-5)^8(x-7)^2(x^2-15x+52).
$$
\end{lem}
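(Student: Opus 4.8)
The plan is to mirror the argument used for Lemmas~\ref{lem:dim14last6quadratic16}--\ref{lem:dim14last6quadratic68}. Suppose for contradiction that a Seidel matrix $S$ of order $29$ satisfies $\chi_S(x)=(x+5)^{15}(x-3)^2(x-5)^8(x-7)^2(x^2-15x+52)$. The quadratic factor has zeros $(15\pm\sqrt{17})/2$, so the distinct eigenvalues of $S$ are $-5,3,5,7$ (with multiplicities $15,2,8,2$) together with the simple conjugate pair $(15-\sqrt{17})/2$ and $(15+\sqrt{17})/2$. Ordering the eigenvalues as $(\lambda_1,\dots,\lambda_6)=\bigl((15-\sqrt{17})/2,\,(15+\sqrt{17})/2,\,-5,\,3,\,5,\,7\bigr)$ puts the two simple eigenvalues first, so with $l=2$ we have $q(x)=\prod_{i=3}^{6}(x-\lambda_i)=(x+5)(x-3)(x-5)(x-7)\in\mathbb Z[x]$, as Lemma~\ref{lem:integrality} requires. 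Here $m_S(x)=(x+5)(x-3)(x-5)(x-7)(x^2-15x+52)$ and $\mu_S(x)=\chi_S(x)/m_S(x)=(x+5)^{14}(x-3)(x-5)^7(x-7)$.

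First I would run the enumeration algorithm of Section~\ref{sec:pea} to list all interlacing characteristic polynomials of $\chi_S(x)$, i.e.\ all $\mu_S(x)f(x)$ with $\deg f=5$, with $f$ interlacing $m_S$, with the top coefficients of $f$ prescribed as in Section~\ref{sec:icp}, and with $f(x-1)$ type 2 (note $n-1=28$ is even, so the congruence condition on $\mathcal P_{28,7}$ is vacuous). Next I would form the linear system $\mathbf n^\top A=\mathbf g^\top$ of Section~\ref{sec:icp} arising from Theorem~\ref{thm:sumofsubpoly} and search for two warranted interlacing characteristic polynomials $\mathfrak f_1(x),\mathfrak f_2(x)$, exhibiting an explicit certificate of warranty for each. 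Using Proposition~\ref{prop:submatrix} I would compute the squared angles $\alpha_{1,\mathfrak f_i}^2$ and $\alpha_{2,\mathfrak f_i}^2$ at the two simple eigenvalues; these lie in $\mathbb Q(\sqrt{17})$. Finally I would check that for no $\varepsilon_2\in\{\pm1\}$ is $q(\lambda_1)\alpha_{1,\mathfrak f_1}\alpha_{1,\mathfrak f_2}+\varepsilon_2\, q(\lambda_2)\alpha_{2,\mathfrak f_1}\alpha_{2,\mathfrak f_2}$ an integer, which contradicts Corollary~\ref{cor:compatible} and finishes the proof. Should it happen that every pair of warranted polynomials is compatible, I would instead follow the hybrid approach of Lemma~\ref{lem:dim14last6quadratic68}: take a warranted $\mathfrak f_1$ together with the (few) interlacing characteristic polynomials compatible with it, and exhibit a certificate of infeasibility for the restricted system $\mathbf n^\top A(\{\mathfrak f_1,\dots\})=\mathbf b^\top$, contradicting Theorem~\ref{thm:sumofsubpoly}.

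Everything here is mechanical once the implementation of \cite{github} is invoked: the enumeration, the Farkas/linear-programming certificates, and the angle computations in $\mathbb Q(\sqrt{17})$ are all routine. The only part requiring judgement — and hence the main obstacle — is locating a pair of warranted interlacing characteristic polynomials whose $2\times2$ array of squared angles is incompatible (or, in the fallback, a warranted polynomial with a small enough set of compatible partners that the resulting subsystem is still infeasible). Since the relevant field $\mathbb Q(\sqrt{17})$ is exactly the one appearing in Lemma~\ref{lem:dim14last6quadratic68}, and the multiplicities of $3$, $5$ and $7$ here mirror those of the earlier quadratic-factor cases, I expect the incompatibility to be exhibited as directly as in Lemma~\ref{lem:dim14last6quadratic16}, by writing down the matrix $\bigl(\alpha_{k,\mathfrak f_i}^2\bigr)$ explicitly.
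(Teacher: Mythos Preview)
Your plan covers the two mechanisms that dispose of Lemmas~\ref{lem:dim14last6quadratic16}--\ref{lem:dim14last6quadratic68}, but for this particular polynomial both of them fail, and the paper has to introduce a third idea that your proposal does not anticipate.

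Concretely: there are $16$ interlacing characteristic polynomials, and the two warranted ones $\mathfrak f_1,\mathfrak f_2$ turn out to be \emph{compatible} with each other, so the direct contradiction via Corollary~\ref{cor:compatible} does not occur. Your fallback --- restrict to the set $\mathfrak F_1$ of interlacing polynomials compatible with $\mathfrak f_1$ and look for a certificate of infeasibility --- also fails: $|\mathfrak F_1|=7$ and the system $\mathbf n^\top A(\mathfrak F_1)=\mathbf b^\top$ has a (unique) nonnegative integer solution $\mathbf n=(20,7,1,1,0,0,0)$. So at this stage nothing you have written rules $S$ out.

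What the paper does next is closer in spirit to Lemma~\ref{lem:dim14last6allint}: the unique interlacing configuration pins down, via Proposition~\ref{prop:submatrix} and \eqref{eqn:eigenvectorentry}, the multiset of entries of unit eigenvectors $\mathbf u,\mathbf v$ for the two simple eigenvalues $(15\mp\sqrt{17})/2$. One then uses the integrality of $(S^2-25I)(S-3I)(S-7I)=q(\lambda_1)\mathbf u\mathbf u^\top+q(\lambda_2)\mathbf v\mathbf v^\top$ to force the sign pattern of $\mathbf v$ relative to $\mathbf u$, and finally checks that the resulting vectors satisfy $\mathbf u^\top\mathbf v=\pm 17/\sqrt{646}\ne 0$, contradicting orthogonality of eigenspaces. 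This eigenvector-reconstruction-plus-orthogonality step is the missing ingredient; without it the argument is incomplete.
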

\begin{proof}
Suppose a Seidel matrix $S$ has characteristic polynomial
$$
\chi_S(x)=(x+5)^{15}(x-3)^2(x-5)^8(x-7)^2(x^2-15x+52).
$$
Let $(\lambda_1,\dots,\lambda_6)=\left((15-\sqrt{17})/2,(15+\sqrt{17})/2,-5,3,5,7\right)$ be a $6$-tuple of distinct eigenvalues of $S$.

There are 16 interlacing characteristic polynomials for $S$.
(The computation to find these $16$ interlacing characteristic polynomials took 0.46 seconds on a modern PC running SageMath~\cite{github}.)
The polynomials $\mathfrak f_1(x)$ and $\mathfrak f_2(x)$ given by 
\begin{align*}
\mathfrak f_1(x)&=(x+5)^{14}(x-3)^2(x-5)^7(x-7)^2(x^3-15x^2+55x-17) \\
\mathfrak f_2(x)&=(x+5)^{14}(x-3)(x-5)^9(x-7)(x^3-15x^2+51x+11)
\end{align*}
are both warranted, with certificates of warranty $(0, 0, 0, 708, 507, 154)$ and $(26302558,$ $0,$ $0,$ $39525,$ $6783,$ $1196)$ respectively.

Out of these 16 interlacing characteristic polynomials, only $7$ are compatible with $\mathfrak f_1(x)$.
Let $\mathfrak F_1 = \{\mathfrak f_1(x),\mathfrak f_2(x),\dots,\mathfrak f_7(x)\}$ consist of these $7$ polynomials, where
\begin{align*}
\mathfrak f_3(x)&=(x+5)^{14}(x-3)(x-5)^9(x-7)(x^3-15x^2+51x+19),\\ 
\mathfrak f_4(x)&=(x+5)^{14}(x-3)(x-5)^8(x-7)(x^4-20x^3+126x^2-236x-79).
\end{align*}

There is a unique nonnegative solution $\mathbf n = (20,7,1,1,0,0,0)$ to the equation $\mathbf n^\top A(\mathfrak F_1) = \mathbf b^\top$, 
where $\mathbf b$ is the coefficient vector for $\frac{\mathrm{d}}{\mathrm{d}x} \chi_S(x)$.
Note that
$$
(S^2-25I)(S-3I)(S-7I)=\frac{1099-275\sqrt{17}}{2}\mathbf{u}\mathbf{u}^{\top}+\frac{1099+275\sqrt{17}}{2}\mathbf{v}\mathbf{v}^{\top}
$$
where $\mathbf{u}$ and $\mathbf{v}$ are unit eigenvectors corresponding to the eigenvalues $\lambda_1$ and $\lambda_2$, respectively.

We will attempt to construct $\mathbf{u}$ and $\mathbf{v}$ from the angles of the corresponding interlacing characteristic polynomials of $S$ for $\lambda_1$ and $\lambda_2$.
Using Proposition~\ref{prop:submatrix}, we find that 
\[
 \begin{bmatrix}
    \alpha_{1,\mathfrak f_1}^2 & \alpha_{2,\mathfrak f_1}^2  \\
    \alpha_{1,\mathfrak f_2}^2 & \alpha_{2,\mathfrak f_2}^2  \\
    \alpha_{1,\mathfrak f_3}^2 & \alpha_{2,\mathfrak f_3}^2 \\
    \alpha_{1,\mathfrak f_4}^2 & \alpha_{2,\mathfrak f_4}^2
 \end{bmatrix}
 = \begin{bmatrix}
    (51-\sqrt{17})/1292 & (51+\sqrt{17})/1292 \\
    (221+21\sqrt{17})/{20672} & (221-21\sqrt{17})/{20672}  \\
    (51-\sqrt{17})/1292 & (51+\sqrt{17})/1292 \\
    9(221+21\sqrt{17})/{20672} & 9(221-21\sqrt{17})/{20672}
  \end{bmatrix}.
\]
Set $\beta = \sqrt{(51-\sqrt{17})/1292}$ and $\overline \beta = \sqrt{(51+\sqrt{17})/1292}$.
By \eqref{eqn:eigenvectorentry}, the vector $\mathbf{u}$ has 20+1=21 entries equal to $\pm\beta$, 7 equal to $\beta(\sqrt{17}+1)/{8}$, and one equal to $\pm3\beta(\sqrt{17}+1)/{8}$.
Similarly, the entries of $\mathbf{v}$ are 21 of $\pm\overline \beta$, 7 of $\pm\overline \beta(\sqrt{17}-1)/{8}$, and one of $\pm3\overline \beta(\sqrt{17}-1)/{8}$.
Without loss of generality, we can assume that all entries of $\mathbf{u}$ are nonnegative:

$$\mathbf{u}^{\top}=\displaystyle \big(\underbrace{\beta,\dots,\beta}_{20},\underbrace{\beta(\sqrt{17}+1)/{8},\dots,\beta(\sqrt{17}+1)/{8}}_{7},\beta,
3\beta(\sqrt{17}+1)/{8}\big).$$

Suppose $\mathbf{v}(1)=\overline \beta$.
Since the entries of $(S^2-25I)(S-3I)(S-7I)$ are integers, we have that $\mathbf{v}(21)=\dots=\mathbf{v}(27)=-\overline \beta(\sqrt{17}-1)/{8}$ and $\mathbf{v}(29)=-3\overline \beta(\sqrt{17}-1)/{8}$.
Consequently, we must also have that $\mathbf{v}(2)=\dots=\mathbf{v}(20)=\mathbf{v}(28)=\overline \beta$.
Hence
\begin{align*}
\mathbf{v}^{\top}=\big(\underbrace{\overline \beta,\dots,\overline \beta}_{20},\underbrace{-\overline \beta(\sqrt{17}-1)/{8},\dots,-\overline \beta(\sqrt{17}-1)/{8}}_7,\overline \beta,-3\overline \beta(\sqrt{17}-1)/{8}\big).
\end{align*}
However, this implies $\mathbf{u}^\top \mathbf{v}=17/\sqrt{646}$, which is a contradiction since $\mathbf{u}$ and $\mathbf{v}$ must be orthogonal.

Similarly, if $\mathbf{v}(1)=-\overline \beta$ then, using the above reasoning, it follows that $\mathbf{u}^\top \mathbf{v}=-17/\sqrt{646}$, which is again a contradiction.
Therefore, there is no Seidel matrix with characteristic polynomial $(x+5)^{15}(x-3)^2(x-5)^8(x-7)^2(x^2-15x+52)$.
\end{proof}

Lastly, to prove that there does not exist a Seidel matrix $S$ with characteristic polynomial 
$$
\chi_S(x)=(x+5)^{15} (x-3)(x-4)(x-5)^{10}(x-9)^2 \in E_{29,14},
$$ 
we first prove an intermediate nonexistence result.

\begin{lem}\label{lem:kill28}
  There does not exist a Seidel matrix $S$ with characteristic polynomial
    $$
    \chi_{S}(x)=(x+5)^{14}(x-5)^9(x-9)(x^2-4x-1)(x^2-12x+31).
    $$
\end{lem}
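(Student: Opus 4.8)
The plan is to run the procedure of Section~\ref{sec:prescribed} on the hypothetical Seidel matrix $S$, which has order $28$. First I would record the structural data: the minimal polynomial is $m_S(x)=(x+5)(x-5)(x-9)(x^2-4x-1)(x^2-12x+31)$, of degree $\delta=7$, so $\mu_S(x)=\chi_S(x)/m_S(x)=(x+5)^{13}(x-5)^8$, and by Lemma~\ref{lem:coeff_of_interlacepoly} every principal submatrix $S[i]$ has characteristic polynomial $\mu_S(x)f(x)$ for some monic $f$ of degree $6$ interlacing $m_S(x)$, with the top coefficients of $f$ prescribed ($b_0=1$, $b_1=a_1$, $b_2=a_2+27$). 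Running the polynomial enumeration algorithm of Section~\ref{sec:pea} subject to these constraints together with the conditions of Section~\ref{sec:icp} --- note $n-1=27$ is odd, so $f(x-1)$ need only be weakly type $2$, and $\mu_S(x)f(x)$ must fall in a congruence class of $\mathcal P_{27,7}$ --- produces the complete set $\mathfrak F$ of interlacing characteristic polynomials of $\chi_S$.

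I would then try to close the argument exactly as in the dimension-$19$ and dimension-$20$ cases: rephrase Theorem~\ref{thm:sumofsubpoly} as the linear system $\mathbf n^\top A=\mathbf g^\top$ with $\mathbf n\geqslant\mathbf 0$ and search, via Farkas' Lemma (Theorem~\ref{thm:farkas}), for a certificate of infeasibility $\mathbf c$; if one exists, $S$ cannot exist. If instead the system is feasible, I would isolate one or two \emph{warranted} members of $\mathfrak F$ (producing certificates of warranty as in Lemmas~\ref{lem:dim19_ev18} and~\ref{lem:dim14last6quadratic16}), compute their angle vectors from Proposition~\ref{prop:submatrix}, and test the compatibility condition of Corollary~\ref{cor:compatible}, i.e.\ whether \eqref{eqn:integrality} can be satisfied. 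Here the simple eigenvalues are $9$ and the conjugate pairs $2\pm\sqrt5$ and $6\pm\sqrt5$, while $-5$ and $5$ are repeated, so $q(x)=(x+5)(x-5)=x^2-25\in\mathbb Z[x]$ and the test amounts to cancelling the $\sqrt5$-part of a $\mathbb Z[\sqrt5]$-weighted sum of products of angles over some choice of signs $\varepsilon_k\in\{\pm1\}$; following Lemmas~\ref{lem:dim14last6quadratic16}--\ref{lem:dim14last6quadratic52}, I expect two warranted interlacing characteristic polynomials to turn out incompatible, contradicting Corollary~\ref{cor:compatible}.

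If compatibility alone does not suffice, the fallback is the eigenvector-reconstruction argument of Lemma~\ref{lem:dim14last6quadratic52}: pin down the (presumably unique) nonnegative interlacing configuration $\mathbf n$, use Theorem~\ref{thm:spec_decomp} to write the integer matrix $(S^2-25I)(S-9I)$ as a combination of the rank-one projections onto the four remaining simple eigenspaces, and exploit integrality of its entries together with the angle data to determine the coordinates of the corresponding unit eigenvectors up to an overall sign; one then derives a contradiction by checking that the reconstructed vectors cannot be orthogonal. The main obstacle I anticipate is deciding which of these routes actually closes the argument: a bare Farkas certificate may fail to exist because the interlacing and modular constraints admit a genuine nonnegative real solution, and then the delicate part is the sign bookkeeping in the eigenvector argument --- choosing $\varepsilon_k$ for the four $\sqrt5$-conjugate eigenvalues so that all irrational contributions to the integer matrix $(S^2-25I)(S-9I)$ cancel, and then squeezing an orthogonality contradiction out of the resulting rigid shape of the eigenvectors.
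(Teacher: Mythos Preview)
Your setup is correct: $n=28$, $\delta=7$, $\mu_S(x)=(x+5)^{13}(x-5)^8$, and the enumeration proceeds as you describe (the paper finds $124$ interlacing characteristic polynomials). However, your anticipated closing route---two warranted polynomials that turn out to be incompatible---does not materialise here: only \emph{one} warranted polynomial $\mathfrak f_1$ is found. The paper then uses compatibility in a way you do not list: rather than seeking an incompatible warranted pair, it \emph{filters} the full set $\mathfrak F$ by compatibility with the single warranted $\mathfrak f_1$, obtaining a subset $\mathfrak F_1$ of just five polynomials (since $\mathfrak f_1$ must occur as some $\chi_{S[j]}$, Corollary~\ref{cor:compatible} forces every $\chi_{S[i]}$ to lie in $\mathfrak F_1$). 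On this reduced set the linear system $\mathbf n^\top A(\mathfrak F_1)=\mathbf b^\top$ has a \emph{unique} solution $\mathbf n=(39/2,\,2,\,9/4,\,1/4,\,4)$, whose entries are not all integers; this already contradicts Theorem~\ref{thm:sumofsubpoly}, with no need for eigenvector reconstruction.

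This ``compatibility filter, then non-integrality'' step is the missing idea in your plan. Your fallback route might in principle be pushed through, but the sign bookkeeping over five simple eigenvalues (including the two $\mathbb Q(\sqrt5)$-conjugate pairs) and a non-unique real configuration would be substantially messier than the one-line integrality check the paper actually uses.
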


\begin{proof}
Suppose a Seidel matrix $S$ has characteristic polynomial
  $$
  \chi_{S}(x)=(x+5)^{14}(x-5)^9(x-9)(x^2-4x-1)(x^2-12x+31).
  $$

We find that there are 124 interlacing characteristic polynomials.
(The computation to find these $124$ interlacing characteristic polynomials took 450.77 seconds on a modern PC running SageMath~\cite{github}.)
Out of these 124 interlacing characteristic polynomials, one is warranted:
 $$
  \mathfrak f_1(x) = (x+5)^{13}(x-5)^8(x^6-25x^5+224x^4-842x^3+1065x^2+387x-554),
  $$
 with certificate of warranty $(0, 0, 0, -7130, -5303, -1486, -344)$.
 
Let $\mathfrak F_1$ be the set of interlacing characteristic polynomials that are compatible with $\mathfrak f_1(x)$.
Then $\mathfrak F_1$ consists of the following five polynomials:
  \begin{align*}
  &(x+5)^{13}(x-5)^8(x^6-25x^5+224x^4-842x^3+1065x^2+387x-554),\\
  &(x+5)^{13}(x-5)^8(x^6-25x^5+224x^4-830x^3+877x^2+1239x-1742),\\
  &(x+5)^{13}(x-1)(x-5)^9(x-9)(x^3-10x^2+15x+34),\\
  &(x+5)^{13}(x-5)^9(x^2-8x-1)(x^3-12x^2+29x+14),\\
  &(x+5)^{13}(x-5)^8(x^3-11x^2+15x+59)(x^3-14x^2+55x-58).
  \end{align*}

There is a unique solution $\mathbf n = (39/2,2,9/4,1/4,4)$ to the equation $\mathbf n^\top A(\mathfrak F_1) = \mathbf b^\top$, 
where $\mathbf b$ is the coefficient vector for $\frac{\mathrm{d}}{\mathrm{d}x} \chi_S(x)$.
However, the coefficients of $\mathbf n$ are not all integers, which contradicts Theorem~\ref{thm:sumofsubpoly}.

\end{proof}

\begin{lem}\label{lem:dim14last6allint}
There does not exist a Seidel matrix $S$ with characteristic polynomial
$$
\chi_S(x)=(x+5)^{15}(x-3)(x-4)(x-5)^{10}(x-9)^2.
$$
\end{lem}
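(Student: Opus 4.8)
The plan is to reuse the ``warranted interlacing characteristic polynomial'' mechanism of Section~\ref{sec:prescribed}, with Lemma~\ref{lem:kill28} supplying the final contradiction. First I would suppose that a Seidel matrix $S$ of order $29$ has $\chi_S(x)=(x+5)^{15}(x-3)(x-4)(x-5)^{10}(x-9)^2$. Its distinct eigenvalues are $-5,3,4,5,9$, so its minimal polynomial is $m_S(x)=(x+5)(x-3)(x-4)(x-5)(x-9)$, of degree $5$, and $\mu_S(x)=\chi_S(x)/m_S(x)=(x+5)^{14}(x-5)^9(x-9)$. By Lemma~\ref{lem:coeff_of_interlacepoly}, every principal submatrix $S[i]$ --- which is again a Seidel matrix, now of order $28$ --- satisfies $\chi_{S[i]}(x)=\mu_S(x)\,f_i(x)$ for some monic degree-$4$ polynomial $f_i$ whose three leading coefficients are pinned down by $n=29$ and $\chi_S$.

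Next I would run the polynomial enumeration algorithm of Section~\ref{sec:pea} together with the constraints of Section~\ref{sec:icp} --- totally real, $f_i$ interlaces $m_S$, and $f_i(x-1)$ is type $2$ (since $n-1=28$ is even) --- to produce the complete set $\mathfrak F$ of interlacing characteristic polynomials of $\chi_S(x)$. Then, using Theorem~\ref{thm:sumofsubpoly}, I would write $\tfrac{\mathrm d}{\mathrm dx}\chi_S(x)=\mu_S(x)\,g(x)$, pass to the reduced linear system $\mathbf n^\top A(F)=\mathbf g^\top$ with $\mathbf n\geqslant\mathbf 0$, and analyse it via Farkas' Lemma (Theorem~\ref{thm:farkas}). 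The point I expect to emerge is that the polynomial
\[
\mathfrak f_1(x)=(x+5)^{14}(x-5)^9(x-9)(x^2-4x-1)(x^2-12x+31)
\]
is \emph{warranted}: deleting the row of its degree-$4$ tail $(x^2-4x-1)(x^2-12x+31)$ makes the subsystem infeasible, witnessed by an explicit certificate of warranty $\mathbf c$ for which the entry of $A\mathbf c$ indexed by $\mathfrak f_1$ is negative while all other entries are nonnegative. (If more than one warranted polynomial turns up, a compatibility filter as in the preceding lemmas would be applied to isolate $\mathfrak f_1$.)

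Finally, a warranted interlacing characteristic polynomial must actually occur, i.e.\ $\chi_{S[i]}(x)=\mathfrak f_1(x)$ for some $i\in\{1,\dots,29\}$. Since $S[i]$ is a Seidel matrix of order $28$, the existence of $S$ would force the existence of a Seidel matrix with characteristic polynomial $\mathfrak f_1(x)=(x+5)^{14}(x-5)^9(x-9)(x^2-4x-1)(x^2-12x+31)$, contradicting Lemma~\ref{lem:kill28}. Hence no such $S$ exists; combined with Lemmas~\ref{lem:dim14last6quadratic16}--\ref{lem:dim14last6quadratic52} this rules out every polynomial in $E_{29,14}$, and with Lemma~\ref{lem:25polydim14} it shows there is no system of $29$ equiangular lines in $\mathbb R^{14}$, giving Theorem~\ref{thm:dim14}. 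The main obstacle I anticipate is computational rather than conceptual: as the $450$-second run in Lemma~\ref{lem:kill28} indicates, the set $\mathfrak F$ of admissible degree-$4$ tails may be large, so enumerating it in full and then carrying out the exact rational feasibility/warranty analysis (and correctly performing the reduction by $\mu_S$ so the Farkas certificate is valid) is the delicate part.
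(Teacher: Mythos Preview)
Your plan has a genuine gap: the polynomial $\mathfrak f_1(x)=(x+5)^{14}(x-5)^9(x-9)(x^2-4x-1)(x^2-12x+31)$ is \emph{not} warranted. When the enumeration is carried out there are exactly six interlacing characteristic polynomials $\mathfrak f_1,\dots,\mathfrak f_6$ (the computation takes about $0.05$ seconds, not hundreds), and after deleting the row for $\mathfrak f_1$ the reduced system $\mathbf n^\top A=\mathbf g^\top$, $\mathbf n\geqslant\mathbf 0$, still has nonnegative integer solutions --- in fact three of them: $(0,0,24,2,0,3)$, $(0,0,25,0,2,2)$, and $(0,1,25,0,0,3)$. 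So no certificate of warranty for $\mathfrak f_1$ exists, and your argument that ``warranted implies must occur, hence contradiction with Lemma~\ref{lem:kill28}'' collapses at its first step.

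What the paper actually does is use Lemma~\ref{lem:kill28} in the opposite direction: since $\mathfrak f_1$ cannot be the characteristic polynomial of any Seidel matrix of order $28$, its multiplicity in any interlacing configuration must be $0$, which restricts the configurations to the three listed above. Each of these three then has to be eliminated by a further argument that your proposal does not contain: one computes the angles $\alpha_{i,\mathfrak f_j}^2$ for the simple eigenvalues $3$ and $4$ via Proposition~\ref{prop:submatrix}, reconstructs (up to signs) the corresponding unit eigenvectors $\mathbf u,\mathbf v$, and checks in each case either that $\mathbf u^\top\mathbf v\ne 0$ or that $(S^2-25I)(S-9I)=96\mathbf u\mathbf u^\top+45\mathbf v\mathbf v^\top$ fails to be an integer matrix. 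Your outline reaches only the point where Lemma~\ref{lem:kill28} is invoked; the eigenvector/integrality analysis that disposes of the three surviving configurations is the missing idea.
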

\begin{proof}
Suppose a Seidel matrix $S$ has characteristic polynomial 
$$\chi_S(x)=(x+5)^{15}(x-3)(x-4)(x-5)^{10}(x-9)^2.$$
Let $(\lambda_1,\dots,\lambda_6)=(3,4,-5,5,9)$ be a $5$-tuple of distinct eigenvalues of $S$.

There are six interlacing characteristic polynomials for $S$:
\begin{align*}
\mathfrak f_1(x) &= (x+5)^{14}(x-5)^9(x-9)(x^2-4x-1)(x^2-12x+31),\\
\mathfrak f_2(x) &= (x+5)^{14}(x-3)(x-5)^{10}(x-9)(x^2-8x-1),\\
\mathfrak f_3(x) &= (x+5)^{14}(x-3)(x-5)^9(x-9)(x^3-13x^2+39x-11),\\
\mathfrak f_4(x) &= (x+5)^{14}(x-3)(x-5)^9(x-9)(x^3-13x^2+39x-3),\\
\mathfrak f_5(x) &= (x+5)^{14}(x-5)^{10}(x-9)(x^3-11x^2+23x+11),\\
\mathfrak f_6(x) &= (x+5)^{14}(x-5)^{10}(x-9)(x^3-11x^2+23x+19).
\end{align*}
(The computation to find these six interlacing characteristic polynomials took 0.05 seconds on a modern PC running SageMath~\cite{github}.)

By Lemma~\ref{lem:kill28}, the interlacing characteristic polynomial $\mathfrak f_1(x)$ cannot be a characteristic polynomial of a principal submatrix of $S$.
Thus, we only consider interlacing configurations whose entry corresponding to $\mathfrak f_1(x)$ is $0$.
This leaves us with three possibilities:
\begin{align*}
(0,0,24,2,0,3), \quad \quad (0,0,25,0,2,2), \quad \quad (0,1,25,0,0,3).
\end{align*}
Using Proposition~\ref{prop:submatrix}, we find that 
\[
 \begin{bmatrix}
    \alpha_{1,\mathfrak f_2}^2 & \alpha_{2,\mathfrak f_2}^2  \\
    \alpha_{1,\mathfrak f_3}^2 & \alpha_{2,\mathfrak f_3}^2  \\
    \alpha_{1,\mathfrak f_4}^2 & \alpha_{2,\mathfrak f_4}^2  \\
    \alpha_{1,\mathfrak f_5}^2 & \alpha_{2,\mathfrak f_5}^2  \\
    \alpha_{1,\mathfrak f_6}^2 & \alpha_{2,\mathfrak f_6}^2
 \end{bmatrix}
 = \begin{bmatrix}
    0 &  17/45 \\
    0 & 1/45  \\
    0 & 1/5 \\
    1/6 & 1/5 \\
     1/3 & 1/45
  \end{bmatrix}.
\]
Now let $T=(S^2-25I)(S-9I)$.
Then, by Theorem~\ref{thm:spec_decomp}, we have
$$T=96\mathbf{u}\mathbf{u}^{\top}+45\mathbf{v}\mathbf{v}^{\top},$$
where $\mathbf{u}$ and $\mathbf{v}$ are unit eigenvectors of $S$ corresponding to eigenvalues 3 and 4, respectively.
Moreover, $\mathbf{u}$ and $\mathbf{v}$ must be orthogonal.
Depending on the three interlacing configurations, by \eqref{eqn:eigenvectorentry}, the entries of $\mathbf{u}$ are in $\left \{0, \pm 1/\sqrt{6},\pm 1/\sqrt{3} \right \}$ and the entries of $\mathbf{v}$ are in $\left \{\pm\sqrt{17/45}, \pm 1/\sqrt{45}, \pm 1/\sqrt{5} \right \}$.

If the interlacing configuration is $(0,0,24,2,0,3)$ then, without loss of generality, we have
$$
\mathbf{v}^{\top}=\bigg(\underbrace{\frac{1}{\sqrt{45}},\dots,\frac{1}{\sqrt{45}}}_{24},\frac{1}{\sqrt{5}},\frac{1}{\sqrt{5}},\frac{1}{\sqrt{45}},\frac{1}{\sqrt{45}},\frac{1}{\sqrt{45}}\bigg),
$$
while
$$
\mathbf{u}^{\top}=\bigg(\underbrace{0,\dots,0}_{26},\pm\frac{1}{\sqrt{3}},\pm\frac{1}{\sqrt{3}},\pm\frac{1}{\sqrt{3}}\bigg).
$$
However, it is not possible to have $\mathbf{u}\cdot\mathbf{v}=0$.
The argument is similar for the interlacing configuration $(0,1,25,0,0,3)$.

Finally, if the  interlacing configuration is $(0,0,25,0,2,2)$ then, without loss of generality,
$$
\mathbf{v}^{\top}=\bigg(\underbrace{\frac{1}{\sqrt{45}},\dots,\frac{1}{\sqrt{45}}}_{25},\frac{1}{\sqrt{5}},\frac{1}{\sqrt{5}},\frac{1}{\sqrt{45}},\frac{1}{\sqrt{45}}\bigg).
$$
Then, since $\mathbf u$ is orthogonal to $\mathbf v$, we must have
$$
\mathbf{u}^{\top} \in \left \{ \pm \bigg(\underbrace{0,\dots,0}_{25},\frac{1}{\sqrt{6}},-\frac{1}{\sqrt{6}},\frac{1}{\sqrt{3}},-\frac{1}{\sqrt{3}}\bigg), \pm \bigg(\underbrace{0,\dots,0}_{25},\frac{1}{\sqrt{6}},-\frac{1}{\sqrt{6}},-\frac{1}{\sqrt{3}},\frac{1}{\sqrt{3}}\bigg) \right \}.
$$
In each case, however, the matrix
$
T=96\mathbf{u}\mathbf{u}^{\top}+45\mathbf{v}\mathbf{v}^{\top}
$
is not an integer matrix.
We thereby arrive at a contradiction.
\end{proof}

Using Lemma~\ref{lem:25polydim14} together with Lemmas~\ref{lem:dim14last6quadratic16},~\ref{lem:dim14last6quadratic32},~\ref{lem:dim14last6cubic},~\ref{lem:dim14last6quadratic68},~\ref{lem:dim14last6quadratic52}, and~\ref{lem:dim14last6allint} we obtain Theorem~\ref{thm:dim14}.

\subsection{Nonexistence of 41 equiangular lines in $\mathbb{R}^{16}$}

In this section we prove our final main result.

\begin{theorem}
\label{thm:dim16}
$N(16) \leqslant 40$.
\end{theorem}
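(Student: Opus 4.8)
The plan is to follow the same strategy used for dimensions $14$, $19$, and $20$: it suffices to show that no system of $41$ equiangular lines exists in $\mathbb{R}^{16}$, equivalently (by Proposition~\ref{prop:candpols}) that no Seidel matrix $S$ of order $41$ has $\chi_S(x) \in P_{41,16}$. First I would dispatch the $20$ polynomials in Table~\ref{tab:20polydim16} by exhibiting for each one a certificate of infeasibility $\mathbf c$: for each candidate $p(x)$, compute the set $\mathfrak F$ of interlacing characteristic polynomials as in Section~\ref{sec:icp}, form the linear system $\mathbf n^\top A = \mathbf g^\top$ arising from Theorem~\ref{thm:sumofsubpoly}, and invoke Farkas' Lemma (Theorem~\ref{thm:farkas}) to certify that it has no nonnegative solution. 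This reduces the problem to the two polynomials in $E_{41,16}$, namely $(x+5)^{25}(x-7)^9(x-9)^4(x-11)(x^2-15x+48)$ and $(x+5)^{25}(x-3)(x-7)^6(x-8)(x-9)^8$.

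For these two polynomials I expect to need the finer eigenvector and compatibility machinery of Section~\ref{sec:prescribed}, exactly as in Lemmas~\ref{lem:dim14last6quadratic16}--\ref{lem:dim14last6allint}. In each case I would compute the interlacing characteristic polynomials, single out a warranted one $\mathfrak f_1$ via a certificate of warranty, and then compute, using Proposition~\ref{prop:submatrix}, the angles $\alpha_{i,\mathfrak f}^2$ associated with the simple eigenvalues. For the first polynomial, the quadratic factor $x^2-15x+48$ contributes an irrational pair of simple eigenvalues, so Corollary~\ref{cor:compatible} applies with teeth: I would hunt for two interlacing characteristic polynomials (one of them warranted) that fail to be compatible, yielding a contradiction. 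For the second polynomial all eigenvalues are integers, so compatibility is automatic and a subtler argument is needed: I would enumerate the finitely many interlacing configurations $\mathbf n \geqslant \mathbf 0$ solving $\mathbf n^\top A = \mathbf b^\top$, discard those with non-integer entries (contradicting Theorem~\ref{thm:sumofsubpoly}), and for any survivors reconstruct the unit eigenvectors $\mathbf u, \mathbf v$ of a pair of simple eigenvalues from the angles, then test whether a matrix polynomial such as $(S^2-25I)(S-7I)$ (or an analogous spectral-projector combination from Theorem~\ref{thm:spec_decomp}) can be an integer matrix and whether $\mathbf u$ is orthogonal to $\mathbf v$ --- precisely the style of Lemma~\ref{lem:dim14last6allint}.

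Should a direct angle argument not immediately close the second case, the fallback --- following the pattern of Lemma~\ref{lem:kill28} --- is to prove an auxiliary nonexistence result: show that a particular putative interlacing characteristic polynomial cannot itself be the characteristic polynomial of a principal submatrix of $S$ (by recursively applying the interlacing-plus-Farkas or angle argument one level down), and then return to the order-$41$ problem, where excluding that submatrix polynomial should eliminate all remaining interlacing configurations. All of these computations are finite and, in keeping with the running times reported elsewhere in the paper, fast.

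The hard part will be the all-integer-spectrum polynomial $(x+5)^{25}(x-3)(x-7)^6(x-8)(x-9)^8$: since Corollary~\ref{cor:compatible} gives nothing here, one must instead squeeze enough arithmetic rigidity out of the integrality of $q(S)$ for a suitably chosen $q$ together with orthogonality of eigenvectors, very possibly only after first trimming the configuration space via an intermediate nonexistence lemma as above.
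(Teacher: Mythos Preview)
Your overall plan matches the paper's: Lemma~\ref{lem:20polydim16} disposes of the twenty polynomials in Table~\ref{tab:20polydim16} by certificates of infeasibility, and the first polynomial in $E_{41,16}$ is killed in Lemma~\ref{lem:dim16_last2_quadratic} exactly as you describe, by exhibiting two warranted interlacing characteristic polynomials that are incompatible.

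Where you go astray is the claim that for the all-integer-spectrum polynomial $(x+5)^{25}(x-3)(x-7)^6(x-8)(x-9)^8$ ``compatibility is automatic''. This is false. Integrality of the eigenvalues does not force the angles $\alpha_{i,\mathfrak f}$ to be rational; they are merely square roots of rationals, and the expression in Lemma~\ref{lem:integrality} can still fail to be an integer. Concretely, the simple eigenvalues here are $3$ and $8$, so $q(x)=(x+5)(x-7)(x-9)$ with $q(3)=192$ and $q(8)=-13$. The paper finds two warranted interlacing characteristic polynomials $\mathfrak f_1,\mathfrak f_2$ with
\[
(\alpha_{1,\mathfrak f_1}^2,\alpha_{2,\mathfrak f_1}^2)=(1/60,\,1/65),\qquad (\alpha_{1,\mathfrak f_2}^2,\alpha_{2,\mathfrak f_2}^2)=(1/10,\,1/65),
\]
and then
\[
q(3)\,\alpha_{1,\mathfrak f_1}\alpha_{1,\mathfrak f_2}\pm q(8)\,\alpha_{2,\mathfrak f_1}\alpha_{2,\mathfrak f_2}
=\frac{192}{\sqrt{600}}\mp\frac{13}{65}
=\frac{16\sqrt{6}\pm 1}{5}\notin\mathbb Z,
\]
so $\mathfrak f_1$ and $\mathfrak f_2$ are incompatible, contradicting Corollary~\ref{cor:compatible}. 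That is the whole of Lemma~\ref{lem:dim16_last2_allint}: no enumeration of interlacing configurations, no eigenvector reconstruction, and no auxiliary Lemma~\ref{lem:kill28}-style result is required. Your proposed fallback machinery would presumably also succeed, but it is premised on a misconception and is considerably heavier than what is actually needed.
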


To prove Theorem~\ref{thm:dim16}, since there exist configurations of $40$ equiangular lines in $\mathbb R^{16}$ \cite{GKMS16,lemmens73}, it suffices to show that there does not exist a system of $41$ equiangular lines in $\mathbb R^{16}$.

\begin{lem}\label{lem:20polydim16}
Suppose $S$ is a Seidel matrix corresponding to 41 equiangular lines in $\mathbb{R}^{16}$.
Then $\chi_S(x) \in E_{41,16}$.
\end{lem}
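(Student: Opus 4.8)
The proof of Lemma~\ref{lem:20polydim16} follows exactly the template established for the analogous statements in dimensions $19$, $20$, and $14$ (Lemmas~\ref{lem:6polydim19},~\ref{lem:6polydim20}, and~\ref{lem:25polydim14}). The plan is as follows. By Proposition~\ref{prop:candpols}, any Seidel matrix $S$ corresponding to $41$ equiangular lines in $\mathbb R^{16}$ has characteristic polynomial in $P_{41,16}$, which consists of the two polynomials in $E_{41,16}$ together with the $20$ polynomials listed in Table~\ref{tab:20polydim16}. So it suffices to rule out each of these $20$ polynomials as the characteristic polynomial of a Seidel matrix. First I would, for each such candidate polynomial $p(x)$, run the procedure of Section~\ref{sec:icp}: compute the minimal polynomial $m_S(x)$ and cofactor $\mu_S(x) = p(x)/m_S(x)$, then use the polynomial enumeration algorithm of Section~\ref{sec:pea} to produce the (finite) set $F$ of totally-real integer polynomials $f(x)$ of degree $\delta-1$ satisfying conditions (i)--(iv) — fixed top coefficients from $\tr S$ and $\tr S^2$, interlacing of $m_S(x)$, the weakly-type-2 (or type-2) shift condition, and membership of $\mu_S(x)f(x)$ in a class of $\mathcal P_{40,7}$. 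This yields the set $\mathfrak F$ of interlacing characteristic polynomials for $p(x)$.

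Next, for each candidate $p(x)$, I would assemble the coefficient matrix $A = A(F)$ and the vector $\mathbf g$ coming from $\frac{\mathrm{d}}{\mathrm{d}x}\chi_S(x) = \mu_S(x)g(x)$, as in Section~\ref{sec:icp}, and invoke Farkas' Lemma (Theorem~\ref{thm:farkas}) to search for a certificate of infeasibility $\mathbf c$ for the linear system $\mathbf n^\top A = \mathbf g^\top$, $\mathbf n \geqslant \mathbf 0$. The existence of such a $\mathbf c$ contradicts Theorem~\ref{thm:sumofsubpoly}, ruling out $p(x)$. The output of these computations — the offending polynomial together with its certificate $\mathbf c$ — is exactly what is tabulated in Table~\ref{tab:20polydim16}, so the lemma is proved by exhibiting that table: for each of the $20$ polynomials in $P_{41,16}\setminus E_{41,16}$ we display a certificate of infeasibility, and the only polynomials in $P_{41,16}$ left standing are the two in $E_{41,16}$.

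The main obstacle — and the reason this particular case is the most expensive, with a reported running time of several minutes — is that the Farkas certificate approach is not guaranteed to succeed: some candidate polynomials may admit a genuine nonnegative (even integral) solution $\mathbf n$ to $\mathbf n^\top A = \mathbf g^\top$, so no certificate of infeasibility exists. If that happens for one of the $20$ polynomials, the plan is to fall back on the stronger tools of Section~\ref{sec:prescribed}: identify a \emph{warranted} interlacing characteristic polynomial (one that must occur as some $\chi_{S[i]}(x)$) via a certificate of warranty, then recurse — either deriving a secondary certificate of infeasibility for that warranted polynomial (as in Lemmas~\ref{lem:dim19_ev18} and~\ref{lem:dim20_ev22}), or, failing that, appealing to the compatibility criterion of Corollary~\ref{cor:compatible} together with Proposition~\ref{prop:submatrix} to extract angle information and force a contradiction in the Gram-matrix / eigenvector structure (as in the dimension-$14$ lemmas). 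In practice, however, one expects — and the short running time and the bare statement "we provide a certificate of infeasibility" in the proof to follow suggest — that for all $20$ polynomials in $P_{41,16}\setminus E_{41,16}$ a direct Farkas certificate suffices, so the proof reduces to presenting Table~\ref{tab:20polydim16} and observing that the residual candidates are precisely $E_{41,16}$.
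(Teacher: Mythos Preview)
Your proposal is correct and follows essentially the same approach as the paper: invoke Proposition~\ref{prop:candpols} to reduce to $P_{41,16}$, then exhibit a Farkas certificate of infeasibility (Table~\ref{tab:20polydim16}) for each of the $20$ polynomials in $P_{41,16}\setminus E_{41,16}$, leaving only $E_{41,16}$. One small slip: since $n-1=40$ is even, condition~(iv) of Section~\ref{sec:icp} (membership in $\mathcal P_{40,7}$) is not applied---instead condition~(iii) strengthens to $f(x-1)$ being type~2---but this does not affect the argument.
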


\begin{proof}
By Proposition~\ref{prop:candpols}, we must have $\chi_S(x) \in P_{41,16}$.
In Table~\ref{tab:20polydim16} (see the appendix), we provide, for all but two of the polynomials in $P_{41,16}$, a certificate of infeasibility $\mathbf c$.
The only remaining polynomials from $P_{41,16}$ are in $E_{41,16}$.
\end{proof}

It remains to show that there does not exist a Seidel matrix whose characteristic polynomial is either of the polynomials in $E_{41,16}$.

\begin{lem}\label{lem:dim16_last2_quadratic}
There does not exist a Seidel matrix of $S$ with characteristic polynomial
$$
\chi_S(x)=(x+5)^{25}(x-7)^9(x-9)^4(x-11)(x^2-15x+48).
$$
\end{lem}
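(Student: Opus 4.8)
The plan is to follow the strategy used for the quadratic cases in Section~\ref{sec:dim1416}, notably Lemmas~\ref{lem:dim14last6quadratic16},~\ref{lem:dim14last6quadratic32}, and~\ref{lem:dim14last6cubic}. Suppose, for a contradiction, that a Seidel matrix $S$ of order $41$ has characteristic polynomial $(x+5)^{25}(x-7)^9(x-9)^4(x-11)(x^2-15x+48)$. The distinct eigenvalues of $S$ are $-5,7,9,11$ together with the Galois-conjugate pair $\lambda_{\pm}=(15\pm\sqrt{33})/2$, so $S$ has minimal polynomial $m_S(x)=(x+5)(x-7)(x-9)(x-11)(x^2-15x+48)$ of degree $6$ and $\mu_S(x)=\chi_S(x)/m_S(x)=(x+5)^{24}(x-7)^8(x-9)^3$. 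The simple eigenvalues are precisely $11,\lambda_+,\lambda_-$, so $l=3$ in the notation of Lemma~\ref{lem:integrality}.

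First I would run the polynomial enumeration algorithm of Section~\ref{sec:pea}, following the recipe in Section~\ref{sec:icp}, to produce the complete list of interlacing characteristic polynomials of $\chi_S(x)$: each has the form $\mu_S(x)f(x)$ with $\deg f=5$, where $f$ interlaces $m_S(x)$ and $f(x-1)$ is type $2$ (since $n-1=40$ is even). Since $\chi_S(x)$ lies in $E_{41,16}$, the interlacing-configuration linear system for $\chi_S(x)$ already admits a nonnegative solution, so a certificate of infeasibility for $\chi_S(x)$ itself is not available; the next step is therefore to single out \emph{warranted} interlacing characteristic polynomials $\mathfrak f_1(x)=\mu_S(x)f_1(x)$ and $\mathfrak f_2(x)=\mu_S(x)f_2(x)$ by exhibiting certificates of warranty, so that each of $\mathfrak f_1$ and $\mathfrak f_2$ must occur as $\chi_{S[i]}(x)$ for some $i$.

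With $\mathfrak f_1$ and $\mathfrak f_2$ in hand, I would use the partial-fraction expansion of Proposition~\ref{prop:submatrix} to read off the squared angles $\alpha_{i,\mathfrak f_j}^2$ at the simple eigenvalues $11,\lambda_+,\lambda_-$; these live in $\mathbb Q(\sqrt{33})$, with the values at $\lambda_+$ and $\lambda_-$ Galois-conjugate. Putting $q(x)=(x+5)(x-7)(x-9)\in\mathbb Z[x]$ (the product over the non-simple eigenvalues), Corollary~\ref{cor:compatible} forces $\mathfrak f_1$ and $\mathfrak f_2$ to be compatible, i.e.\ $q(11)\alpha_{1,\mathfrak f_1}\alpha_{1,\mathfrak f_2}+\varepsilon_+q(\lambda_+)\alpha_{+,\mathfrak f_1}\alpha_{+,\mathfrak f_2}+\varepsilon_-q(\lambda_-)\alpha_{-,\mathfrak f_1}\alpha_{-,\mathfrak f_2}\in\mathbb Z$ for some $\varepsilon_\pm\in\{\pm1\}$. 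I expect that running through the four sign choices shows this membership always fails (the $\sqrt{33}$-part never cancels), yielding the contradiction. If two mutually incompatible warranted polynomials are not directly available, the fallback, mirroring Lemmas~\ref{lem:dim14last6quadratic68} and~\ref{lem:dim14last6quadratic52}, is to restrict to the interlacing characteristic polynomials compatible with $\mathfrak f_1$, solve the system $\mathbf n^\top A=\mathbf b^\top$ arising from Theorem~\ref{thm:sumofsubpoly}, and obtain a contradiction from non-integrality or infeasibility of $\mathbf n$, or else reconstruct the unit eigenvectors $\mathbf u,\mathbf v$ for $\lambda_+,\lambda_-$ from their angle vectors and contradict either $\mathbf u^\top\mathbf v=0$ or the integrality of $(S^2-15S+48I)$ via its spectral decomposition.

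The main obstacle is twofold. On the computational side, the enumeration may return many interlacing characteristic polynomials (the analogous step in Lemma~\ref{lem:kill28} produced $124$), so locating the right warranty and infeasibility certificates is the delicate part. On the conceptual side, there is no guarantee the compatibility obstruction alone suffices: as in Lemmas~\ref{lem:dim14last6quadratic52} and~\ref{lem:dim14last6allint}, one may be forced into the finer eigenvector-reconstruction argument, whose success depends on the precise arithmetic of the angles in $\mathbb Q(\sqrt{33})$ and on determining the sign patterns of $\mathbf u$ and $\mathbf v$ from the integrality of a suitable polynomial in $S$.
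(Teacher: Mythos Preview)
Your plan matches the paper's proof exactly: one enumerates the interlacing characteristic polynomials (there are $18$), finds warranted polynomials $\mathfrak f_1$ and $\mathfrak f_2$ with explicit certificates of warranty, computes their squared angles at the three simple eigenvalues $(15\pm\sqrt{33})/2$ and $11$ via Proposition~\ref{prop:submatrix}, and checks directly that $\mathfrak f_1$ and $\mathfrak f_2$ are not compatible, contradicting Corollary~\ref{cor:compatible}. None of your fallback strategies is needed here (and as a minor aside, in that fallback the integer matrix you would want is $q(S)=(S+5I)(S-7I)(S-9I)$, not $S^2-15S+48I$, since you need to kill the non-simple eigenvalues, not the simple irrational ones).
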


\begin{proof}
Suppose a Seidel matrix $S$ has characteristic polynomial
$$
\chi_S(x)=(x+5)^{25}(x-7)^9(x-9)^4(x-11)(x^2-15x+48).
$$
Let $(\lambda_1,\dots,\lambda_6)=\left((15-\sqrt{33})/2,(15+\sqrt{33})/2,11,-5,7,9\right)$ be a $6$-tuple of distinct eigenvalues of $S$.

There are 18 interlacing characteristic polynomials for $S$.
(The computation to find these $18$ interlacing characteristic polynomials took 4.08 seconds on a modern PC running SageMath~\cite{github}.)
The polynomials $\mathfrak f_1(x)$, $\mathfrak f_2(x)$, and $\mathfrak f_3(x)$ given by 
\begin{align*}
\mathfrak f_1(x) &=(x+5)^{24}(x-5)(x-7)^8(x-9)^3(x-11)(x^3-21x^2+131x-215),\\
\mathfrak f_2(x) &=(x+5)^{24}(x-5)(x-7)^8(x-9)^4(x-11)(x^2-12x+23),\\
\mathfrak f_3(x) &=(x+5)^{24}(x-7)^8(x-9)^4(x^4-28x^3+270x^2-1028x+1281)
\end{align*}
are each warranted, with certificates of warranty $(0, 0, 0, 0, 124, 113)$, $(8817840108,$ $0,$ $0,$ $3461627,$ $336081,$ $33608)$, and $(0, 0, 0, 11405, 5147, 1289)$, respectively.

Using Proposition~\ref{prop:submatrix}, we find
\[
 \begin{bmatrix}
   \alpha_{1,\mathfrak f_1}^2 & \alpha_{2,\mathfrak f_1}^2 & \alpha_{3,\mathfrak f_1}^2  \\
   \alpha_{1,\mathfrak f_2}^2 & \alpha_{2,\mathfrak f_2}^2 & \alpha_{3,\mathfrak f_2}^2  \\
   \alpha_{1,\mathfrak f_3}^2 & \alpha_{2,\mathfrak f_3}^2 & \alpha_{3,\mathfrak f_3}^2  
 \end{bmatrix}
 = \begin{bmatrix}
    (77+9\sqrt{33})/4884 & (77-9\sqrt{33})/4884 & 0  \\
    (693-67\sqrt{33})/9768 & (693+67\sqrt{33})/9768 & 0\\
    (121-7\sqrt{33})/6512 & (121+7\sqrt{33})/6512 & 1/16
  \end{bmatrix}.
\]
However, $\mathfrak f_1$ and $\mathfrak f_2$ are not compatible, which contradicts Corollary~\ref{cor:compatible}.
\end{proof}

\begin{lem}\label{lem:dim16_last2_allint}
There does not exist a Seidel matrix of $S$ with characteristic polynomial
$$
\chi_S(x)=(x+5)^{25}(x-3)(x-7)^6(x-8)(x-9)^8.
$$
\end{lem}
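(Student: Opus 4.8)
The plan is to follow the template of Lemma~\ref{lem:dim14last6allint}, exploiting that here every eigenvalue of $S$ would be an integer: $S$ would have order $n = 41$, with distinct eigenvalues $-5, 3, 7, 8, 9$ of multiplicities $25, 1, 6, 1, 8$, minimal polynomial $m_S(x) = (x+5)(x-3)(x-7)(x-8)(x-9)$, and $\mu_S(x) = \chi_S(x)/m_S(x) = (x+5)^{24}(x-7)^5(x-9)^7$. The simple eigenvalues are $3$ and $8$, so taking $q(x) = (x+5)(x-7)(x-9) \in \mathbb{Z}[x]$, Lemma~\ref{lem:integrality} tells us that $q(S) = q(3)\,\mathbf u\mathbf u^\top + q(8)\,\mathbf v\mathbf v^\top = 192\,\mathbf u\mathbf u^\top - 13\,\mathbf v\mathbf v^\top$ is an integer matrix, where $\mathbf u$ and $\mathbf v$ are orthonormal unit eigenvectors for $3$ and $8$.

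First I would run the enumeration of Section~\ref{sec:icp} to list all interlacing characteristic polynomials $\mathfrak f_1, \dots, \mathfrak f_k$ of $\chi_S$, each of the form $(x+5)^{24}(x-7)^5(x-9)^7 f(x)$ with $f$ a degree-$4$ totally real integer polynomial interlacing $m_S$. Then I would search for a Farkas certificate of infeasibility (Theorem~\ref{thm:farkas}) for the linear system $\mathbf n^\top A(F) = \mathbf b^\top$, $\mathbf n \geqslant \mathbf 0$, with $\mathbf b$ the coefficient vector of $\tfrac{\mathrm d}{\mathrm dx}\chi_S$; should this alone not close the case, I would enumerate the finitely many nonnegative \emph{integer} interlacing configurations $\mathbf n$ solving it, discarding, by Theorem~\ref{thm:sumofsubpoly}, any with a non-integer or negative coordinate. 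If some particular $\mathfrak f_i$ proves stubborn, I would prove an intermediate nonexistence lemma for it in the spirit of Lemma~\ref{lem:kill28}, via a fresh interlacing enumeration at order $40$, so that the coordinate of $\mathbf n$ at that $\mathfrak f_i$ is forced to be $0$.

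For each surviving interlacing configuration, Proposition~\ref{prop:submatrix} yields the squared angles $\alpha_{1,\mathfrak f_i}^2 = |\mathbf u(\cdot)|^2$ and $\alpha_{2,\mathfrak f_i}^2 = |\mathbf v(\cdot)|^2$ at the simple eigenvalues $3$ and $8$, hence the multisets of absolute values of the entries of $\mathbf u$ and of $\mathbf v$; a quick first filter is to discard any configuration whose occurring $\mathfrak f_i$ are pairwise incompatible in the sense of Corollary~\ref{cor:compatible}. Then, combining $\|\mathbf u\| = \|\mathbf v\| = 1$, the requirement that every entry of $192\,\mathbf u\mathbf u^\top - 13\,\mathbf v\mathbf v^\top$ be an integer, and $\mathbf u^\top\mathbf v = 0$, I expect each remaining configuration to collapse under a short sign-pattern analysis, exactly as in the closing argument of Lemma~\ref{lem:dim14last6allint}. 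The principal obstacle I foresee is bookkeeping rather than a genuinely new idea: this is the largest instance treated ($n = 41$, with the free factor $\phi$ of degree $13$), so the list of interlacing characteristic polynomials, the number of candidate configurations, and the sign-pattern casework forced by integrality of $q(S)$ are all correspondingly heavier; and if a fallback sub-lemma in the style of Lemma~\ref{lem:kill28} is needed, establishing it --- with its own order-$40$ interlacing enumeration --- is likely the most delicate step.
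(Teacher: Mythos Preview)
Your plan would work, but it is far more elaborate than what the paper actually does. You are modeling your argument on Lemma~\ref{lem:dim14last6allint}, which required killing an intermediate polynomial, enumerating interlacing configurations, and a sign-pattern case analysis. For the present polynomial the paper instead follows the much shorter template of Lemmas~\ref{lem:dim14last6quadratic16}--\ref{lem:dim14last6cubic} and~\ref{lem:dim16_last2_quadratic}: among the (only five) interlacing characteristic polynomials, \emph{two} of them---call them $\mathfrak f_1$ and $\mathfrak f_2$---are each warranted, so both must occur as $\chi_{S[i]}$ for some $i$. Computing the angles at the simple eigenvalues $3$ and $8$ via Proposition~\ref{prop:submatrix} gives $(\alpha_{1,\mathfrak f_1}^2,\alpha_{2,\mathfrak f_1}^2)=(1/60,1/65)$ and $(\alpha_{1,\mathfrak f_2}^2,\alpha_{2,\mathfrak f_2}^2)=(1/10,1/65)$, and one checks directly that $\mathfrak f_1$ and $\mathfrak f_2$ are not compatible, contradicting Corollary~\ref{cor:compatible}. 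That is the whole proof.

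The difference is that you treat compatibility only as ``a quick first filter'' on configurations, whereas the paper uses it as the decisive blow: once two \emph{warranted} polynomials are found to be incompatible, no enumeration of configurations, no order-$40$ sub-lemma, and no sign-pattern analysis is needed. Your heavier machinery (and the ``principal obstacle'' you anticipate) simply never arises here.
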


\begin{proof}
Suppose a Seidel matrix $S$ has characteristic polynomial
$$
\chi_S(x)=(x+5)^{25}(x-3)(x-7)^6(x-8)(x-9)^8.
$$
Let $(\lambda_1,\dots,\lambda_5)=(3,8,-5,7,9)$ be a $5$-tuple of distinct eigenvalues of $S$.

There are five interlacing characteristic polynomials for $S$.
(The computation to find these five interlacing characteristic polynomials took 0.06 seconds on a modern PC running SageMath~\cite{github}.)
The polynomials $\mathfrak f_1(x)$ and $\mathfrak f_2(x)$ given by 
\begin{align*}
\mathfrak f_1(x) &=(x+5)^{24}(x-7)^5(x-9)^7(x^2-10x+17)(x^2-12x+31),\\
\mathfrak f_2(x) &=(x+5)^{24}(x-7)^6(x-9)^7(x^3-15x^2+63x-57)
\end{align*}
are each warranted, with certificates of warranty $(-133896, 0, 0, -304, -43)$ and $(-6402648,$ $0,$ $0,$ $-14059,$ $-1562)$, respectively.

Using Proposition~\ref{prop:submatrix}, we find
\[
 \begin{bmatrix}
   \alpha_{1,\mathfrak f_1}^2 & \alpha_{2,\mathfrak f_1}^2  \\
   \alpha_{1,\mathfrak f_2}^2 & \alpha_{2,\mathfrak f_2}^2 
 \end{bmatrix}
 = \begin{bmatrix}
    1/60 & 1/65 \\
    1/10 & 1/65
  \end{bmatrix}.
\]
However, $\mathfrak f_1$ and $\mathfrak f_2$ are not compatible, which contradicts Corollary~\ref{cor:compatible}.
\end{proof}

Using Lemma~\ref{lem:20polydim16} together with Lemmas~\ref{lem:dim16_last2_quadratic} and~\ref{lem:dim16_last2_allint} we obtain Theorem~\ref{thm:dim16}.

\section{A concluding remark}

    As shown in Table~\ref{tab:equi}, the smallest $d$ for which $N(d)$ is not known is now $d=17$.
    One can apply the techniques of this paper to enumerate the characteristic polynomials for Seidel matrices that potentially correspond to $49$ equiangular lines in $\mathbb R^{17}$.
    In this case, we find that $P_{49,17}$ consists of 194 polynomials.
    Out of these 194 polynomials, two have no interlacing characteristic polynomials and 158 have certificates of infeasibility.
    This leaves us with $34$ polynomials that potentially correspond to an equiangular line system of cardinality $49$ in $\mathbb R^{17}$.
    However, not all of these $34$ polynomials has a warranted interlacing characteristic polynomial.
    In order to deal with such polynomials, heavier computations are required and it is necessary to develop new methods in order to deduce whether or not there exist corresponding Seidel matrices.
    Thus, we leave the consideration of these $34$ polynomials to a future paper.

\appendix 

\section{Tables of certificates of infeasibility}

In Table~\ref{tab:25polydim14}, we list all but six of the polynomials in the set $P_{29,14}$ together with their certificates of infeasibility. 

\begin{longtable}{l}

\endfirsthead

\multicolumn{1}{c}%
{{\bfseries \tablename\ \thetable{} -- continued from previous page}} \\ \hline 
\endhead

\hline \multicolumn{1}{r}{{Continued on next page}} \\ \hline
\endfoot

\endlastfoot

          $(x+5)^{15}(x-4)(x-5)^{12}(x-11)$   \\
          $(423, 0, 0, 44)$ \\
         \hline
         
          $(x+5)^{15}(x-5)^8(x-7)^4(x^2-7x+4)$   \\
          $(-30576, 0, 0, -631, -126)$ \\
         \hline
         
          $(x+5)^{15}(x-3)^3(x-4)(x-5)^4(x-7)^6$   \\
          $(-11155, 0, 0, -114, 0)$ \\
         \hline
         
          $(x+5)^{15}(x-3)^3(x-5)^6(x-7)^4(x-8)$   \\
          $(87568, 0, 0, 458, 51)$ \\
         \hline
         
          $(x+5)^{15}(x-5)^{11}(x^3-20x^2+119x-188)$   \\
          $(-544242, 0, 0, -2563, -366)$ \\
         \hline
         
          $(x+5)^{15}(x-3)(x-5)^{10}(x-7)(x^2-15x+48)$   \\
          $(-34793172, 0, 0, -59189, -12969, -2820)$ \\
         \hline
         
          $(x+5)^{15}(x-3)(x-5)^8(x-7)^3(x^2-11x+20)$   \\
          $(1656424, 0, 0, 4712, 1084, 361)$ \\
         \hline
         
          $(x+5)^{15}(x-3)(x-5)^7(x-7)^4(x^2-9x+12)$   \\
          $(328314, 0, 0, 971, 0, 0)$ \\
         \hline
         
          $(x+5)^{15}(x-3)(x-5)^6(x-7)^5(x^2-7x+8)$   \\
          $(0, 0, 0, 67, 75, 76)$ \\
         \hline
         
          $(x+5)^{15}(x-5)^{10}(x-9)(x^3-16x^2+75x-92)$   \\
          $(0, 0, 0, 479, 395, 112)$ \\
         \hline
         
          $(x+5)^{15}(x-5)^9(x-7)^2(x^3-16x^2+71x-64)$   \\
          $(3452532, 0, 0, 6816, 1105, 184)$ \\
         \hline
         
          $(x+5)^{15}(x-5)^8(x-7)^3(x^3-14x^2+53x-44)$   \\
          $(0, 0, 0, -157, -173, -52)$ \\
         \hline
         
          $(x+5)^{15}(x-3)(x-5)^8(x-8)(x^2-12x+31)^2$   \\
          $(-6439670, 0, 0, -11608, -1528, -217)$ \\
         \hline
         
          $(x+5)^{15}(x-5)^{10}(x^4-25x^3+219x^2-779x+928)$   \\
          $(0, 0, 0, -339, -265, -71)$ \\
         \hline
         
          $(x+5)^{15}(x-4)(x-5)^9(x-7)(x^3-19x^2+107x-169)$   \\
          $(0, 0, 0, 8141, 3743, 676, 0)$ \\
         \hline
         
          $(x+5)^{15}(x-3)(x-5)^8(x-7)^2(x-9)(x^2-9x+16)$   \\
          $(0, 0, 0, 20451, 8413, 0, -1018)$ \\
         \hline
         
          $(x+5)^{15}(x-3)(x-4)(x-5)^6(x-7)^2(x^2-12x+31)^2$   \\
          $(-4921960, 0, 0, -3784, 3, 0, 0)$ \\
         \hline
         
          $(x+5)^{15}(x-5)^9(x-7)(x^4-23x^3+183x^2-581x+596)$   \\
          $(-899357723, 0, 0, -690887, -98738, -14106, -2015)$ \\
         \hline
         
          $(x+5)^{15}(x-5)^9(x-7)(x^4-23x^3+183x^2-565x+484)$   \\
          $(0, 0, 0, 0, -17, 0, -242)$ \\
         \hline
         
          $(x+5)^{15}(x-5)^8(x-7)^2(x^4-21x^3+151x^2-431x+388)$   \\
          $(0, 0, 0, 6028, 3367, 978, 238)$ \\
         \hline
         
          $(x+5)^{15}(x-5)^7(x-7)^3(x^4-19x^3+123x^2-313x+256)$   \\
          $(0, 0, 0, 2863, 1795, 616, 174)$ \\
         \hline
         
          $(x+5)^{15}(x-5)^9(x^5-30x^4+344x^3-1874x^2+4823x-4672)$   \\
          $(0, 0, 0, 11531, 6111, 1747, 425)$ \\
         \hline
         
          $(x+5)^{15}(x-5)^8(x-7)(x^5-28x^4+298x^3-1500x^2+3557x-3176)$   \\
          $(-391700060, 0, 0, -184580, -25940, -3625, -500, 4)$ \\
         \hline
         
          $(x+5)^{15}(x-5)^8(x-7)(x^5-28x^4+298x^3-1484x^2+3365x-2616)$   \\
          $(3329450510, 0, 0, 1568930, 212711, 25343, 2250, 0)$ \\
         \hline
         
          $(x+5)^{15}(x-5)^8(x-7)(x^2-10x+17)(x^3-18x^2+101x-176)$   \\
          $(0, 0, 0, 0, -27929, -19411, -7066, -2001)$ \\
         \hline
         
             \caption{ Certificates of infeasibility for each polynomial in $P_{29,14} \backslash E_{29,14}$.
              (The computation to find the interlacing characteristic polynomials in each case took less than 21.80 seconds on a modern PC running SageMath~\cite{github}.)}
    \label{tab:25polydim14}
\end{longtable}


In Table~\ref{tab:20polydim16}, we list all but two of the polynomials in the set $P_{41,16}$  together with their certificates of infeasibility.

\begin{longtable}{l}

\endfirsthead

\multicolumn{1}{c}%
{{\bfseries \tablename\ \thetable{} -- continued from previous page}} \\ \hline 
\endhead

\hline \multicolumn{1}{r}{{Continued on next page}} \\ \hline
\endfoot

\endlastfoot

          $(x+5)^{25}(x-7)^{10}(x-9)^4(x^2-19x+80)$   \\
          $(-26642928,0,0,-26341,-4390)$ \\
         \hline

          $(x+5)^{25}(x-7)^{12}(x-9)^2(x-11)(x-12)$   \\
          $(32108784, 0, 0, 18832, 1713)$ \\
         \hline
         
          $(x+5)^{25}(x-7)^{12}(x-8)(x-11)^3$   \\
          $(-13493, 0, 0, -62)$ \\
         \hline
         
          $(x+5)^{25}(x-7)^9(x-9)^3(x^4-35x^3+447x^2-2465x+4948)$   \\
          $(0, 0, 0, -427982, -149845, -31263, -5561)$ \\
         \hline
         
          $(x+5)^{25}(x-7)^9(x-9)^4(x^3-26x^2+213x-544)$   \\
          $(241566984, 0, 0, 99410, 11046, 1227)$ \\
         \hline
         
          $(x+5)^{25}(x-7)^{10}(x-9)^3(x-11)(x^2-17x+64)$   \\
          $(0, 0, 0, 26648, 10679, 2150)$ \\
         \hline
         
          $(x+5)^{25}(x-7)^{10}(x-9)^2(x-11)^2(x^2-15x+52)$   \\
          $(0, 0, 0, 38390, 15614, 2995)$ \\
         \hline
         
          $(x+5)^{25}(x-7)^8(x-8)(x-9)^2(x-11)(x^2-16x+59)^2$   \\
          $(0, 0, 0, -922821, -295413, -53247, -8185)$ \\
         \hline
         
          $(x+5)^{25}(x-7)^8(x-9)^5(x^3-24x^2+179x-412)$   \\
          $(356804550, 0, 0, 188720, 26961, 3851)$ \\
         \hline
         
          $(x+5)^{25}(x-5)(x-7)^8(x-9)^4(x-11)(x^2-17x+68)$   \\
          $(0, 0, 0, 0, 99745, 50373, 10481)$ \\
         \hline
         
           $(x+5)^{25}(x-7)^7(x-9)^4(x^5-40x^4+626x^3-4784x^2+17829x-25904)$   \\
          $(0, 0, 0, 0, 0, 19125, 14174, 5890)$ \\
         \hline
         
           $(x+5)^{25}(x-7)^7(x-9)^4(x^5-40x^4+626x^3-4780x^2+17757x-25580)$  \\
          $(2198761371082, 0, 0, 316965325, 29410745, 2438199, 152389, 0)$ \\
         \hline
         
           $(x+5)^{25}(x-7)^6(x-9)^4(x^2-15x+52)(x^2-16x+59)^2$   \\
          $(-3905464984, 0, 0, -889825, -56453, 3, 0)$ \\
         \hline
         
           $(x+5)^{25}(x-7)^7(x-8)(x-9)^5(x^3-23x^2+163x-349)$   \\
          $(0, 0, 0, 0, -48443, -28265, -8404)$ \\
         \hline
         
            $(x+5)^{25}(x-5)^2(x-7)^6(x-8)(x-9)^6(x-11)$   \\
          $(1339775890, 0, 0, 659363, 82421, 10302)$ \\
         \hline
         
            $(x+5)^{25}(x-7)^7(x-9)^5(x^4-31x^3+347x^2-1653x+2824)$   \\
          $(0, 0, 0, 0, 0, -61, -67)$ \\
         \hline
         
             $(x+5)^{25}(x-4)(x-7)^8(x-9)^6(x-11)$   \\
          $(2871696, 0, 0, 3493, 0)$ \\
         \hline
         
             $(x+5)^{25}(x-5)^2(x-7)^4(x-9)^8(x^2-15x+52)$   \\
          $(26404920, 0, 0, 15610, 1163, 14)$ \\
         \hline
         
             $(x+5)^{25}(x-7)^6(x-9)^7(x^3-20x^2+123x-232)$   \\
          $(0, 0, 0, -4544, -2238, -513)$ \\
         \hline
         
             $(x+5)^{25}(x-7)^7(x-9)^6(x^3-22x^2+149x-292)$   \\
          $(0, 0, 0, -3873, -1613, -180)$ \\
          \hline 
         
         \caption{Certificates of infeasibility for each polynomial in $P_{41,16}\backslash E_{41,16}$.
              (The computation to find the interlacing characteristic polynomials in each case took less than 12.41 seconds on a modern PC running SageMath~\cite{github}.)}
    \label{tab:20polydim16}
    \end{longtable}
\end{document}